\definecolor{rouge}{rgb}{0.7,0.00,0.00}
\definecolor{vert}{rgb}{0.00,0.5,0.00}
\definecolor{bleu}{rgb}{0.00,0.00,0.8}
\newtheorem{theorem}{Theorem}[section]
\newtheorem*{theorem*}{Theorem}
\newtheorem{lemma}[theorem]{Lemma}
\newtheorem{definition}[theorem]{Definition}
\newtheorem{proposition}[theorem]{Proposition}
\newtheorem{condition}{Condition}
\newcommand{\I}{\mathbf 1}
\newcommand{\R}{\mathbb R}
\theoremstyle{definition}
\newtheorem{remark}[theorem]{Remark}
\numberwithin{equation}{section}
\def\bb#1{\mathbb{#1}}
\def\scr#1{\mathscr{#1}}
\def\RR{\mathbb{R}}
\def\PP{\mathbb{P}}
\def\E{\mathbb{E}}
\def\NN{\mathbb{N}}
\def\wt{\widetilde}
\def\cR{{\mathcal R}}
\def\de{{\delta}}
\def\la{{\lambda}}
\def\si{{\sigma}}
\def\De{{\Delta}}
\def\al{{\alpha}}
\def\ep{{\epsilon}}
\def\ka{{\kappa}}
\def\Ga{{\Gamma}}
\def\ga{{\gamma}}
\def\de{{\delta}}
\def\De{{\Delta}}
\def\si{{\sigma}}
\def\la{{\lambda}}
\def\La{{\Lambda}}
\def\vare{{\varepsilon}}
\def\th{{\theta}}
\def\EE{\mathbb{ E}}
\def\th{{\theta}}
\def\si{{\sigma}}
\def\al{{\alpha}}
\def\ps{{\psi}}
\begin{document}
\title[Langevin dynamics of McKean-Vlasov type with L\'evy noises] {Exponential contractivity and propagation of chaos for Langevin dynamics of McKean-Vlasov type with L\'evy noises}

\author{Yao Liu \qquad  Jian Wang\qquad Meng-ge Zhang}

\thanks{\emph{Y. Liu:}
School of Mathematics and Statistics, Fujian Normal University, 350117 Fuzhou, P.R. China.
  \texttt{liuyaomath@163.com}}

\thanks{\emph{J. Wang:}
School  of Mathematics and Statistics \& Key Laboratory of Analytical Mathematics and Applications (Ministry of Education) \& Fujian Provincial Key Laboratory
of Statistics and Artificial Intelligence, Fujian Normal University, 350007 Fuzhou, P.R. China. \texttt{jianwang@fjnu.edu.cn}}

\thanks{\emph{M-G. Zhang:}
School of Mathematics and Statistics, Fujian Normal University, 350117 Fuzhou, P.R. China.
  \texttt{mgzhangmath@163.com}}

\maketitle

\begin{abstract} By the probabilistic coupling approach which combines a new refined basic coupling with the synchronous coupling for L\'evy processes, we obtain explicit exponential contraction rates in terms of the standard $L^1$-Wasserstein distance for the following  Langevin dynamic $(X_t,Y_t)_{t\ge0}$  of McKean-Vlasov type on $\R^{2d}$:
\begin{equation*}\left\{\begin{array}{l}
dX_t=Y_t\,dt,\\
dY_t=\left(b(X_t)+\displaystyle\int_{\R^d}\tilde{b}(X_t,z)\,\mu^X_t(dz)-\ga Y_t\right)\,dt+dL_t,\quad \mu^X_t={\rm Law}(X_t),\end{array}\right.\\
\end{equation*}
where $\ga>0$, $b:\R^d\rightarrow\R^d$ and $\tilde{b}:\R^{2d}\rightarrow\R^d$ are two globally Lipschitz continuous functions, and $(L_t)_{t\ge0}$ is an $\R^d$-valued pure jump L\'evy process. The proof is also based on a novel distance function, which is designed according to the distance of the marginals associated with the constructed coupling process. Furthermore, by applying the coupling technique above with some modifications, we also provide the propagation of chaos uniformly in time
for the corresponding mean-field interacting particle systems with L\'evy noises in the standard $L^1$-Wasserstein distance as well as with explicit bounds.

\medskip

\noindent\textbf{Keywords:} Langevin dynamic of McKean-Vlasov type with L\'evy noise; exponential contraction; refined basic coupling; $L^1$-Wasserstein distance; propagation of chaos

\medskip

\noindent \textbf{MSC 2010:} 60G51; 60G52; 60J25; 60J75.
\end{abstract}
\allowdisplaybreaks

\section{Introduction and main results} \label{Sec Main Results}
In this paper, we consider long-time behaviors of the Langevin dynamic $(X_t,Y_t)_{t\ge0}$ of McKean-Vlasov type on $\R^{2d}$ determined by the following stochastic differential equation (SDE)
\begin{equation}\left\{\begin{array}{l}\label{Equation}
dX_t=Y_t\,dt,\\
dY_t=\left(b(X_t)+\displaystyle\int_{\R^d}\tilde{b}(X_t,z)\,\mu^X_t(dz)-\ga Y_t\right)\,dt+dL_t,\quad \mu^X_t={\rm Law}(X_t),\end{array}\right.\\
\end{equation}
where $\ga>0$, $b:\R^d\rightarrow\R^d$ and $\tilde{b}:\R^{2d}\rightarrow\R^d$ are two globally Lipschitz continuous functions, and $(L_t)_{t\ge0}$ is an $\R^d$-valued pure jump L\'evy process so that its L\'evy measure $\nu$ on $(\R^d,\scr{B}(\R^d))$ satisfies $\nu(\{0\})=0$ and $\displaystyle\int_{\R^d}(|z|\wedge|z|^2)\,\nu(dz)<\infty$.
In the field of statistical  physics, the functions $b$ and $\tilde{b}$ denote the external force and the interaction force, respectively. In particular, when $\tilde{b}\equiv0$, the solution of \eqref{Equation} corresponds to the classical Langevin dynamic, and it can be interpreted as a particle at time $t$ having a position $X_t$ and a velocity $Y_t$, which moves according to the external force.
Besides long time behaviors of \eqref{Equation}, we also focus on the mean-field interacting particle system corresponding to \eqref{Equation} which is defined, for all $N\in\NN$, by
\begin{equation}\left\{\begin{array}{l}\label{meanfield}
d\bar{X}^{i,N}_t=\bar{Y}^{i,N}_t\,dt,\\
d\bar{Y}^{i,N}_t=\Big(b(\bar{X}^{i,N}_t)+\displaystyle\frac 1 N\sum_{j=1}^N\tilde{b}(\bar{X}^{i,N}_t,\bar{X}^{j,N}_t)-\ga \bar{Y}^{i,N}_t\Big)\,dt+dL_t^i,\quad i=1,2,\cdots,N,\end{array}\right.
\end{equation}
where $\{(L_t^i)_{t\ge0}\}_{1\le i \le N}$ are independent
copies of $(L_t)_{t\ge0}$. The connection between \eqref{Equation} and \eqref{meanfield} is that the McKean-Vlasov type SDE \eqref{Equation} describes the dynamics of one particle of the interacting particle system \eqref{meanfield} when the number of particles $N$ tends to infinity. The property is closely related to the so-called $propagation~of~chaos$, which was originally studied by Kac \cite{KM} for the Boltzmann equation and was further developed by Sznitman \cite{SA} when the driven stochastic noise is a Brownian motion.

Actually, when the driven noise $(L_t)_{t\ge0}$ is  a standard Brownian motion $(B_t)_{t\ge0}$, the exponential contractivity for \eqref{Equation} recently has been studied in \cite{SK} by the coupling approach partly motivated from
 \cite{EGZ}, where (roughly speaking) a synchronous coupling is used when the distance between the marginals of the coupling process is large and  a reflection coupling is applied when the associated distance is small. In this work, we aim to develop such kind coupling idea for the Langevin dynamics of McKean-Vlasov type with L\'evy noises. In particular, in order to consider more general L\'evy driven noises we will make use of the refined basic coupling introduced in \cite{LW}. Moreover,
  due to the non-local property of the infinitesimal generator of the L\'evy process, we need to adjust the transition from the synchronous coupling for large distances to the refined basic coupling for small distances in order to suit a proper underlying distance function, which is a crucial point in our approach.
On the other hand, the propagation of chaos has been extensively studied in the Brownian setting, see e.g. \cite{CGM,DEGZ,MS,SK} and the references therein. In particular, for Langevin diffusion of McKean-Vlasov type, \cite{SK} provides bounds uniform in time for the propagation of chaos.
In this paper, we are also interested in establishing conditions on $b$ and $\tilde{b}$ such that, for any $t\ge0$, when $N\to\infty$ the law of the mean-field interacting particles driven by \eqref{meanfield} converges to the law of the Langevin dynamic $(X_t,Y_t)$ of McKean-Vlasov type with L\'evy noises given by \eqref{Equation}.

\subsection{Main results}

In order to state our main results, we first present the assumptions taken throughout the paper.

\noindent \textbf{Assumption (A0)} {\it
There
exist a constant $\ka>0$, and a non-decreasing and concave function $\si:=\sigma_{\ka,\gamma}\in C([0,\infty))\cap C^2((0,\infty))$ such that $\int_{0+}\sigma(s)^{-1}\,ds<\infty$ and, for all $r>0,$
\begin{equation}\label{f}
\si(r)\le \frac{1}{2r}J(\gamma(\ka\wedge r))(\ka\wedge r)^2,
\end{equation}
where $\gamma>0$ is given in \eqref{Equation}, and
$$
J(s):=\inf_{|x|\le s}\big(\nu\wedge(\delta_x\ast\nu)\big)(\R^d),\quad s>0.
$$
}

\noindent \textbf{Assumption (A1)}\,{\it  The function $b:\R^d\rightarrow\R^d$ is globally Lipschitz continuous, i.e., there is a constant $L_b>0$ such that for all $x, x'\in\R^d$,
$$
|b(x)-b(x')|\le L_b|x-x'|.
$$
Moreover, there are constants $R_0, \th>0$ so that for all $x, x'\in\R^d$ with $|x-x'|>R_0$,
$$
\langle b(x)-b(x'),x-x'\rangle\le-\th|x-x'|^2.
$$}

\noindent \textbf{Assumption (A2)} \, {\it The function $\tilde{b}:\R^{2d}\rightarrow\R^d$ is globally Lipschitz continuous, i.e., there exists a constant $L_{\tilde b}>0$ such that for any $x$, $x'$, $z$ and $z'\in\R^d$,
$$
|\tilde{b}(x,z)-\tilde{b}(x',z')|\le L_{\tilde b} (|x-x'|+|z-z'|).
$$}

It is easy to see that, under Assumptions \textbf{(A1)} and \textbf{(A2)} (as well as the condition that the L\'evy measure $\nu$ of the L\'evy process $(L_t)_{t\ge0}$ satisfies $\displaystyle\int_{\R^d}(|z|\wedge|z|^2)\,\nu(dz)<\infty$), the SDE \eqref{Equation} has a unique strong solution $(X_t,Y_t)_{t\ge0}$ provided that the initial distribution has finite first order moment; see Proposition \ref{prp-st} below for the details. Furthermore, it is also obvious to see that  under Assumptions \textbf{(A1)} and \textbf{(A2)} the SDE \eqref{meanfield} has a unique strong solution $ \{(\bar{X}^{i,N}_t,\bar{Y}^{i,N}_t)_{t\ge0}\}_{1\le i \le N}$, which enjoys the Markov property.

In the following, for the metric space $\bb{M}$, which is here either $\R^{2d}$ or $\R^{2Nd}$, we set
$
\scr{P}_1(\bb{M}):=\{\mu\in\scr{P}(\bb{M}):\mu(|\cdot|)<\infty\},
$ where $\scr{P}(\bb{M})$ is the class of all probability measures on $\bb{M}$.
The $L^1$-Wasserstein distance with respect to a distance function $d:\bb{M}\times\bb{M}\to\R$ is defined by
$$
\bb W_d(\mu_1,\mu_2):=\inf_{\Pi\in\mathscr{C}(\mu_1,\mu_2)}\int_{\bb{M}\times\bb{M}}d(x,y)\,\Pi(dx,dy),
$$
where the infimum is taken over all couplings $\Pi$ with marginal measures $\mu_1$ and $\mu_2$ respectively. In particular, when $d(x,y)=|x-y|$, $\bb W_d$ is just the standard $L^1$-Wasserstein distance, which is simply denoted by $\bb W_1$ below.

First, we have the following statement
about the exponential contraction of the $L^1$-Wasserstein distance for the McKean-Vlasov type Langevin dynamic \eqref{Equation} with L\'evy noises.

\begin{theorem}\label{Main theorem}
Suppose that Assumptions {\rm\textbf{(A0)}}--{\rm\textbf{(A2)}} hold, and the constants involved in Assumption {\rm\textbf{(A1)}} satisfy
\begin{equation}\label{hh}
L^2_b\ga^{-2}<\frac{3}{4}\th.
\end{equation} Then there exists a constant $C_{\tilde b}>0$ such that, when
the constant in Assumption  {\rm\textbf{(A2)}} fulfills $L_{\tilde b}\le C_{\tilde b}$,
there are constants $\lambda,\,C_1>0$ so that for any $t>0$ and
$\mu,\,\bar\mu\in\scr{P}_1(\R^{2d})$
\begin{equation}\label{W1}
\bb W_1(\mu_t,\bar\mu_t)\le C_1e^{-\lambda t}\bb W_1(\mu,\bar\mu),
\end{equation}
where $\mu_t$ and $\bar \mu_t$ are the laws of the solutions $(X_t,Y_t)_{t\ge0}$ to \eqref{Equation} with initial distribution
$\mu$ and
$\bar \mu$ respectively.
\end{theorem}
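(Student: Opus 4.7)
The strategy is to couple two copies of the McKean--Vlasov Langevin SDE \eqref{Equation} with initial laws $\mu$ and $\bar\mu$ and then control along the coupling a tailor-made distance function on $\R^{2d}$. Because the L\'evy noise acts only on the velocity component, the naive Euclidean distance $|X-X'|+|Y-Y'|$ is not suited to a direct Gronwall estimate; as in the Brownian setting of \cite{SK,EGZ} I would first introduce an auxiliary linear combination
\[
r_t:=|X_t-X_t'|+\alpha\,\bigl|(X_t-X_t')+\beta (Y_t-Y_t')\bigr|,
\]
with $\alpha,\beta>0$ chosen as functions of $\ga$, $\th$ and $L_b$ so that, thanks to \eqref{hh}, the deterministic drift is dissipative in the region $\{|x-x'|>R_0\}$ of Assumption \textbf{(A1)}, while both summands simultaneously respond to a displacement of the velocity component through the friction $-\ga Y_t$.

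Next I would construct the coupling of the two L\'evy drivers in a distance-dependent way: a synchronous coupling (identical jumps) when $r_t$ exceeds a threshold tuned to $R_0$ and $\ka$, and the refined basic coupling of \cite{LW} when $r_t$ is small. The refined basic coupling produces coincident post-jump $Y$-components with intensity proportional to $J(\ga(\ka\wedge r_t))$, which is exactly the quantity appearing in Assumption \textbf{(A0)}. Because a jump of the $Y$-component changes the combined quantity $(X-X')+\beta(Y-Y')$ but not $|X-X'|$ itself, I have to calibrate the switching threshold, the cut-off $\ka$ and the weights $\alpha,\beta$ so that the non-local contribution of the distance function is dominated by the gain from the cancelling jumps; this adjustment is precisely what the introduction flags as the key technical point compared to the Brownian case.

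With the coupling in hand I would pick a concave, non-decreasing $f:[0,\infty)\to[0,\infty)$ of Eberle type,
\[
f(r)=\int_0^r \varphi(s)\,g(s)\,ds,\qquad \varphi(s)=\exp\Bigl(-c_1\int_0^s\frac{du}{\sigma(u)}\Bigr),
\]
so that $\int_{0+}\sigma(s)^{-1}\,ds<\infty$ from Assumption \textbf{(A0)} guarantees $\varphi$ is bounded below, and $g$ is an auxiliary concave cut-off that is linear above the synchronous-coupling threshold. Applying It\^o's formula to $\mathrm{e}^{\lambda t}f(r_t)$ and combining (i) the drift dissipation at large $r_t$, (ii) the jump gain on $\{r_t\le\ka\}$ where \eqref{f} converts $\sigma(r)$ into a negative contribution of the non-local operator on $f$, and (iii) the concavity of $f$, I expect an inequality of the form
\[
\frac{d}{dt}\E\bigl[\mathrm{e}^{\lambda t}f(r_t)\bigr]\le \mathrm{e}^{\lambda t}L_{\tilde b}\,C\,\bigl(\E[f(r_t)]+\bb W_1(\mu^X_t,\bar\mu^X_t)\bigr),
\]
for some $\lambda>0$ produced by (i)--(iii) and a constant $C$ independent of $L_{\tilde b}$.

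The main obstacle will be closing the loop with the McKean--Vlasov interaction. The right-hand side features $\bb W_1(\mu^X_t,\bar\mu^X_t)$, which can be bounded by the coupling marginal $\E[|X_t-X_t'|]$ and hence by $\E[f(r_t)]$ via the equivalence of $f$ with the Euclidean norm on $\R^{2d}$. After this substitution, dissipation wins over the perturbation only if the rate $\lambda$ is larger than $L_{\tilde b}$ times the equivalence constants; this is exactly where the quantitative smallness condition $L_{\tilde b}\le C_{\tilde b}$ arises, with $C_{\tilde b}$ determined by $\lambda$, $\alpha$, $\beta$ and the comparability constants between $f$ and the Euclidean distance. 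Gronwall's lemma then produces exponential decay of $\E[f(r_t)]$, and taking the infimum over admissible couplings of $\mu$ and $\bar\mu$ upgrades the estimate to \eqref{W1} in the standard $L^1$-Wasserstein distance.
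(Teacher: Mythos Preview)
Your strategy captures the right ingredients---distance-dependent switching between synchronous and refined basic coupling, an Eberle-type concave profile built from $\sigma$, and closing the McKean--Vlasov feedback via $\bb W_1(\mu^X_t,\bar\mu^X_t)\le\E|X_t-X_t'|$---but there is a structural gap in the large-distance regime.

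Your single additive metric $r_t=|v|+\alpha|v+\beta w|$ (with $v=X-X'$, $w=Y-Y'$) does \emph{not} contract under the drift alone. Writing $q=v+\ga^{-1}w$, one has $dq=\ga^{-1}(b(x)-b(x'))\,dt+\ga^{-1}(dL_t-dL_t')$ and $d|v|\le(-\ga|v|+\ga|q|)\,dt$, so under synchronous coupling
\[
\frac{d}{dt}\bigl(\alpha|v|+|q|\bigr)\le -\tfrac{\alpha\ga}{2}|v|+\alpha\ga|q|+\text{(interaction)},
\]
which is not negative when $|q|$ is large relative to $|v|$. The one-sided dissipativity $\langle b(x)-b(x'),v\rangle\le-\th|v|^2$ for $|v|>R_0$ controls only $|v|$, and the friction has been absorbed into $q$, so nothing damps $|q|$ from the drift. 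Consequently your step ``(i) the drift dissipation at large $r_t$'' fails precisely on the set where you have switched to synchronous coupling and the jump gain ``(ii)'' is no longer available.

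The paper resolves this by using \emph{two} different pre-metrics and gluing them: the additive $r_s=\al|v|+|q|$ for the refined-basic-coupling regime, and a \emph{quadratic} form $r_l^2=A|v|^2+B\langle v,w\rangle+C|w|^2$ for the synchronous regime. The cross term $B\langle v,w\rangle$ is what makes the degenerate (hypocoercive) drift genuinely contractive at large distances; the constants $A,B,C$ and the rate $\tau$ are produced by a linear-algebraic system (see \eqref{i}) that is solvable exactly under \eqref{hh}. The switching is governed not by $r_s$ itself but by $\Delta=r_s-\ep r_l$ against the threshold $D_\Ga$ of \eqref{e:Gamma}, and the final distance is $\rho=\psi\bigl((\Delta\wedge D_\Ga)+\ep r_l\bigr)$, which equals $\psi(r_s)$ when $\Delta\le D_\Ga$ and $\psi(D_\Ga+\ep r_l)$ otherwise. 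Propositions~\ref{Proposition1} and~\ref{Proposition2} give the two separate contraction estimates; the proof of Theorem~\ref{Main theorem} then combines them via this composite $\rho$, absorbs the $L_{\tilde b}$-terms into spare copies of $|v|$ extracted from each estimate (this is where \eqref{n} enters), and finishes with Gr\"onwall and the equivalence Lemma~\ref{lemma3.4}. Your outline would go through once you replace the single $r_t$ by this two-metric construction.
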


\begin{remark}\label{Remark}
\begin{itemize}
\item[{\rm(i)}] To obtain Theorem \ref{Main theorem}, we in fact establish the following assertion
\begin{equation}\label{W}
\bb W_{\rho}(\mu_t,\bar\mu_t)\le e^{-\lambda t}\bb W_{\rho}(\mu,\bar \mu),
\end{equation}
where $\bb W_{\rho}$ denotes the $L^1$-Wasserstein
distance with respect to a designed distance $\rho$ on $\R^{2d}$ (see \eqref{Metric} below) that is equivalent to the Euclidean distance. Moreover, according to the proof of Theorem \ref{Main theorem}, we have the following explicit estimates for the constant $C_{\tilde b}$ in the statement as well as
the constants $\lambda$ and $C_1$ involved in the exponential contraction
\eqref{W1}:
\begin{equation}\label{n}
L_{\tilde b}\le C_{\tilde b}:= \min\left\{\frac{L_b}{4}e^{-\La_1},\frac{1}{8\sqrt{2}}\ep\ga^2\tau(1-2\tau)e^{-\La_2}\right\},
\end{equation}
\begin{equation}\label{La}
\lambda=\min\bigg\{\al\ga e^{-\La_1},\al\ga\frac{k_0-4}{4(k_0\al+1)}e^{-\La_1},\frac{\tau\ga\ep\vare}{4}e^{-\La_2}\bigg\}
\end{equation}
and
\begin{equation}\label{C1}
C_1=\frac{\sqrt{2}\max\{\al+1,\ga^{-1}\}e^{\La_2}}{\ep\min\big\{(1-2\tau)/(2\sqrt{2}),\ga^{-1}/\sqrt{3}\big\}}
\end{equation}
with
$$
\La_1:=2\al\ga(1+k_0\al)\int^{R_1}_0\si\Big(\frac{s}{1+k_0\al}\Big)^{-1}ds,\quad
\La_2:=2\al\ga(1+k_0\al)\int^{2R_1}_0\si\Big(\frac{s}{1+k_0\al}\Big)^{-1}ds,
$$
and
$$
\al:=2L_b\ga^{-2},\quad \tau:=\min\{{1}/{8}, \ga^{-2}\th-4L^2_b\ga^{-4}/3\},
$$
where $k_0>4$, $R_1$ is a positive (large) constant, and $\ep$ and $\vare$ are positive (small) constants.
In particular, we should note that the estimates above for the constants $\lambda$ and $C_1$ are independent of dimension.
\item[{\rm (ii)}]
 According to \eqref{f} and $\int_{0+}\sigma(s)^{-1}\,ds<\infty$, we know that $$\lim_{r\to0} J(r)=\lim_{r\to 0} \inf_{|x|\le r}\big(\nu\wedge(\delta_x\ast\nu)\big)(\R^d)=\infty.$$  Hence, Assumption {\rm\textbf{(A0)}} roughly indicates that there are many active small jumps for the L\'evy process $(L_t)_{t\ge0}$. Such kind assumption  can be regarded as a non-degenerate condition for the L\'evy measure $\nu$ near zero; see \cite{LMW,LW} and the references therein for more details.
     In particular, according to \cite[Example 1.2]{LW}, when the L\'evy measure $\nu$
   of the L\'evy process $(L_t)_{t\ge0}$
    satisfies
    $$\nu(dz)\ge c_0\I_{\{0<z_1\le1\}}|z|^{-d-\beta}\,dz$$ with $c_0>0$, $\beta\in (0,1)$ and $z=(z_1,z_2,\cdots,z_d)\in \R^d$, then we can take $\si(r)=c_1r^{1-\beta}$ for $r\in (0,1]$ in \eqref{f}.
    \end{itemize}
\end{remark}

Below, we provide the result on uniform in time propagation of chaos of the mean-field interacting particle system with L\'evy jumps \eqref{meanfield} towards the Langevin dynamic $(X_t,Y_t)$ of McKean-Vlasov type with L\'evy noises given by \eqref{Equation}. For this, we define the normalized $l^1$-distance
\begin{equation}\label{lN}
l^1_N((x,y),(x',y')):=N^{-1}\sum_{i=1}^N
|(x^i,y^i)-(x'^i,y'^i)|, \quad  ((x,y),(x',y'))\in\RR^{2Nd}\times\RR^{2Nd},
\end{equation}
where
$x=(x^1,x^2,\cdots, x^N)\in \RR^{Nd}$, and
$|\cdot|$ is the Euclidean distance on $\R^{2d}$. Besides, we also need the following assumption.

\noindent \textbf{Assumption (A3)} \, {\it  The L\'evy measure $\nu$ of the L\'evy process $(L_t)_{t\ge0}$ satisfies $$\displaystyle\int_{\R^d}|z|^2\,\nu(dz)<\infty.$$}

\begin{theorem}\label{chaos}
Suppose that Assumptions {\rm\textbf{(A0)}}--{\rm\textbf{(A3)}}, \eqref{hh} and \eqref{n} hold. Let
$\mu$ and
$\bar{\mu}$ be two probability measures on $\RR^{2d}$ with finite second order moment. Then there are constants
$C_1,C_2>0$ so that for all $t>0$ and $N\ge1$,
\begin{equation}\label{WN1}
\bb W_{l^1_N}(\mu^{\otimes N}_t,\bar\mu^N_t)\le C_1e^{-\lambda t}\bb W_{l^1_N}(\mu^{\otimes N},\bar\mu^{\otimes N})+C_2N^{-1/2},\quad t>0,
\end{equation}
where $\mu^{\otimes N}_t$ is the product law of $N$ independent solutions to \eqref{Equation} with initial distribution
$\mu$, and $\bar\mu^N_t$ is the law of the particles driven by \eqref{meanfield} with initial distribution
$\bar{\mu}^{\otimes N}$.
\end{theorem}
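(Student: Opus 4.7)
The plan is to lift the coupling from Theorem~\ref{Main theorem} to the $N$-particle setting. Let $\{(X^i_t,Y^i_t)\}_{i=1}^N$ be $N$ independent solutions to \eqref{Equation} with initial distribution $\mu^{\otimes N}$, driven by independent L\'evy noises $(L^i_t)_{t\ge 0}$, and let $\{(\bar X^{i,N}_t,\bar Y^{i,N}_t)\}_{i=1}^N$ solve \eqref{meanfield} with initial law $\bar\mu^{\otimes N}$, driven by the \emph{same} noises $(L^i_t)_{t\ge 0}$. For each $i$ I would couple the pair $((X^i_t,Y^i_t),(\bar X^{i,N}_t,\bar Y^{i,N}_t))$ by the refined basic + synchronous coupling designed in the proof of Theorem~\ref{Main theorem}, and measure the coordinate-wise discrepancy by the distance $\rho$ constructed there (cf.~\eqref{W}). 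Writing $\rho_N(t):=N^{-1}\sum_{i=1}^N\rho((X^i_t,Y^i_t),(\bar X^{i,N}_t,\bar Y^{i,N}_t))$, the equivalence of $\rho$ with the Euclidean distance (Remark~\ref{Remark}(i)) makes $\E\rho_N(t)$ equivalent, up to multiplicative constants, to $\bb W_{l^1_N}(\mu^{\otimes N}_t,\bar\mu^N_t)$, so it suffices to control $\E\rho_N(t)$.

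The only difference between the two dynamics in coordinate $i$ lies in the interaction drift
$$
D^{i,N}_t := \int_{\R^d}\tilde b(X^i_t,z)\,\mu^X_t(dz)-\frac{1}{N}\sum_{j=1}^N \tilde b(\bar X^{i,N}_t,\bar X^{j,N}_t),
$$
which I would split as $D^{i,N}_t = E^{i,N}_t + F^{i,N}_t$, with the empirical fluctuation
$$
E^{i,N}_t := \int_{\R^d}\tilde b(X^i_t,z)\,\mu^X_t(dz)-\frac1N\sum_{j=1}^N \tilde b(X^i_t,X^j_t)
$$
and the Lipschitz residue
$$
F^{i,N}_t := \frac1N\sum_{j=1}^N\big[\tilde b(X^i_t,X^j_t)-\tilde b(\bar X^{i,N}_t,\bar X^{j,N}_t)\big].
$$
One has $|F^{i,N}_t|\le L_{\tilde b}|X^i_t-\bar X^{i,N}_t|+L_{\tilde b}N^{-1}\sum_{j=1}^N|X^j_t-\bar X^{j,N}_t|$, which after averaging over $i$ is exactly the type of term already absorbed by the contraction argument of Theorem~\ref{Main theorem} under the smallness condition \eqref{n}. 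Replaying that infinitesimal computation coordinate-by-coordinate and averaging over $i$ should produce
$$
\frac{d}{dt}\E\rho_N(t) \le -\lambda\,\E\rho_N(t) + \frac{C}{N}\sum_{i=1}^N \E|E^{i,N}_t|
$$
for some $C>0$, with $\lambda$ the rate from Theorem~\ref{Main theorem}.

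For the fluctuation term, since $\{X^j_t:j\neq i\}$ are i.i.d.\ with common law $\mu^X_t$ and independent of $X^i_t$, a conditional variance computation combined with Cauchy--Schwarz yields $\E|E^{i,N}_t|\le C' N^{-1/2}\bigl(1+\sup_{s\ge 0}\E|X^1_s|^2\bigr)^{1/2}$. The time-uniform bound $\sup_{s\ge 0}\E|X^1_s|^2<\infty$ is obtained from the dissipativity in~\textbf{(A1)} together with Assumption~\textbf{(A3)} by a standard It\^o-plus-Gr\"onwall argument, the finite second moment of $\nu$ being what controls the jump martingale. Substituting into the ODE above and applying Gr\"onwall's inequality delivers $\E\rho_N(t)\le e^{-\lambda t}\E\rho_N(0)+C'' N^{-1/2}$, which, once converted back to the Euclidean distance, is exactly \eqref{WN1}. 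The hard part will be the uniformity in $t$: it requires simultaneously that (i) the coupling of Theorem~\ref{Main theorem} retains a strictly positive contraction rate when the mean-field residue $F^{i,N}_t$ is reinserted---which is exactly why the smallness \eqref{n} of $L_{\tilde b}$ is imposed---and (ii) the second moment of $X^1_t$ stays bounded uniformly in $t$, which is the role of~\textbf{(A3)}. If either ingredient were absent, the $N^{-1/2}$ contribution would grow with $t$ and uniform-in-time propagation of chaos would fail.
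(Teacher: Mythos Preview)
Your proposal is correct and follows essentially the same route as the paper: coordinate-wise coupling via the refined basic/synchronous scheme, the splitting of the interaction discrepancy into an empirical fluctuation $E^{i,N}_t$ (the paper's $A^i_t$) and a Lipschitz residue $F^{i,N}_t$, absorption of the latter under \eqref{n}, a conditional-variance bound $\E|E^{i,N}_t|\lesssim N^{-1/2}(\E|X^1_t|^2)^{1/2}$ (Lemma~\ref{reAi}), and a uniform-in-time second moment bound obtained by It\^o--Gr\"onwall using \textbf{(A3)} (Lemma~\ref{C2}). The only cosmetic difference is that the paper routes the argument through the decoupled SDE \eqref{decoupledN} to make the Markov coupling explicit, whereas you work directly with solutions to \eqref{Equation}; by weak well-posedness these coincide.
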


\begin{remark}\label{Remark2}
\begin{itemize}
\item[{\rm (i)}]
The constants $\la$ and $C_1$ in Theorem \ref{chaos} are given by \eqref{La} and \eqref{C1} respectively, and $C_2$ is independent of $t$ and $N$.
On the other hand, similar to Theorem \ref{Main theorem}, in order to prove Theorem \ref{chaos} we turn to establish the following assertion
\begin{equation}\label{WN}
\bb W_{\rho_N}(\mu^{\otimes N}_t,\bar\mu^N_t)\le e^{-\lambda t}\bb W_{\rho_N}(\mu^{\otimes N},\bar\mu^{\otimes N})+C_0N^{-1/2},\quad t>0,
\end{equation}
where $\bb W_{\rho_N}$ denotes the $L^1$-Wasserstein distance with respect to the metric $\rho_N$ on $\R^{2Nd}$ that is equivalent to $l^1_N$ (see \eqref{NMetric} and \eqref{equivalenceN} below for the details).
\item[{\rm (ii)}]
The proof of Theorem \ref{chaos} will partly follow that of Theorem \ref{Main theorem}. The main difference here is that we need to quantify the drift terms  $\int_{\R^d}\tilde{b}(X_t,z)\,\mu^X_t(dz)$ and $N^{-1}\sum_{j=1}^N\tilde{b}(\bar{X}^{i,N}_t,\bar{X}^{j,N}_t)$ involved in the SDEs \eqref{Equation} and  \eqref{meanfield} respectively. For this purpose, we require that the associated processes, and so the L\'evy measure $\nu$, have
finite second order moment.
\end{itemize}
 \end{remark}

\subsection{Overview on the existing results and approaches} Lots of papers are devoted to the existence and the uniqueness of McKean-Vlasov processes; see \cite{MS} and references therein for the history and some developments, and \cite{LMW} for the case that the driven noise is a L\'evy process.  As mentioned before, the equation \eqref{meanfield} can describe the motion of $N$ particles subject to damping, random collisions and a confining potential as well as the
interaction with one another through an interacting potential, while \eqref{Equation} has been paid by attentions in the probability and the PDE community since its density describes the limit dynamic of particles. Therefore, long time behaviors of McKean-Vlasov processes as well as the associated propagation of chaos have wide
interest and are the objective of many works.

When the driven noise is a Brownian motion, there are a few works on this topic. Some analytic approaches (mostly motivated by the famous work on the hypoellipticity due to Villani) were adopted to get long time behaviors for nonlinear Langevin diffusions; see \cite{BD,GLWZ}. With the aid of functional inequalities, the propagation of chaos was also proved in \cite{GM}.  Meanwhile, the probabilistic techniques have been fully used to analyse nonlinear Langevin diffusions, and, in particular, see \cite{BGM,GLM,SK} for the powerful coupling techniques. Among them, based on a novel distance function which combines with two contraction results for large and small distances and uses both synchronous coupling and reflection coupling that adjusted to the distance, global contractivity for nonlinear Langevin diffusions with general forces and the associated uniform in time propagation of chaos are recently obtained in \cite{SK}.

From the viewpoints of mathematics and physics, it is natural to consider McKean-Vlasov processes driven by L\'evy noises; see \cite{JMW2,JMW, MW}. The propagation of chaos phenomenon has also been studied for McKean-Vlasov SDEs driven by L\'evy process. For example, when the driving L\'evy noise has a finite second order moment, the quantitative convergence rates of propagation of chaos under standard Lipschitz assumptions on the drift and the diffusion coefficients were presented in \cite[Theorem 1.3]{JMW}. Recently, by the regularizing properties and the dynamics of the associated semigroup, quantitative weak propagation of chaos for McKean-Vlasov SDEs driven by a rotationally invariant $\alpha$-stable process with $\alpha\in (1,2)$ has been established in \cite{Cav}. Nevertheless, to the best of our knowledge, long time behaviors for the  McKean-Vlasov process \eqref{Equation} driven by L\'evy noises are still unknown in the literature. One of purposes of this paper is to fill in this gap.

Our approach here is heavily inspired by that of \cite{SK}. We have to handle the difficulties coming from the nonlinearity and the degenerate property of the Langevin dynamic (the latter property is analogous to the hypoellipticity of Langevin diffusions), but in the present setting we also need to take care of the (discontinuous) driven noise simultaneously. In detail, we will resort to the refined basic coupling for L\'evy noises that was first introduced in \cite{LW} and the construction of the proper metric function that fulfills the non-local property of L\'evy noises.
Such kind construction is completely different from that for Langevin diffusions known in the literature.
On the other hand, we combine these two new points above with some ideas from \cite{SK}, specially by considering two separate metrics for large and small distances that the synchronous coupling and the refined basic coupling apply respectively. Besides, we also partly make use of the coupling approach in  \cite{BW} to yield the exponential ergodicity of the following stochastic Hamiltonian system driven by L\'evy noises:
\begin{equation*}\left\{\begin{array}{l}
dX_t=(aX_t+b Y_t)\,dt,\\
dY_t=U(X_t,Y_t)\,dt+dL_t.\end{array}\right.\\
\end{equation*}

It is clear that the paper \cite{BW} is concerned on the classical Langevin dynamics with L\'evy noises; that is, $\tilde b\equiv0$ in \eqref{Equation}. In particular, the contraction is shown in \cite[Theroems 1.1 and 1.3]{BW} in a specific $L^1$- Wasserstein distance with respect to a semimetric involving a Lyapunov function. Then, it is possible to extend the approach of \cite{BW} to nonlinear setting; see \cite{GLM} for the related work on nonlinear Langevin diffusions. As mentioned in \cite{SK}, the advantage of the approach in the present paper is that the rates of the global contractivity in Theorem \ref{Main theorem} are dimension free. On the other hand, Theorem \ref{chaos} shows uniform in time propagation of chaos, as well as the explicit quantitative rates, of the particle system \eqref{meanfield} towards the Langevin dynamic of McKean-Vlasov type with L\'evy noises given by \eqref{Equation}. We would like to mention that, although the result of such kind has been proved in \cite[Theorem 1.4]{JMW} even with distribution dependent noises, the bounds obtained herein are uniform only on a finite time interval.

\ \

The paper is arranged as follows. In the next section, we first claim the SDE  \eqref{Equation} has the unique strong solution. This enables us to introduce the corresponding decoupled SDE \eqref{decoupled1}  and apply the Markov coupling technique. Then, we construct a coupling process for the decoupled SDE \eqref{decoupled1} through a coupling operator. In particular, a new refined basic coupling and the synchronous coupling for L\'evy processes are fully used. With the aid of all the preparations, the proof of Theorem \ref{Main theorem} is given in Section \ref{Proofs}. Section \ref{section4} is devoted to the proof of Theorem \ref{chaos}, which is also based on the coupling techniques. Moreover, the difference of the drift term between  \eqref{Equation} and  \eqref{meanfield} is required to quantify carefully.

\section{Strong solution of Langevin dynamics of McKean-Vlasov type with jumps and coupling process of their decoupled version}
\subsection{Uniqueness of strong solution to the SDE  \eqref{Equation}}
In this part, we will apply Theorem \ref{app} in the Appendix to verify that the SDE \eqref{Equation} has a unique strong solution $(X_t,Y_t)_{t\ge0}$ in the framework of the paper. We first note that
under Assumption \textbf{(A2)}, for any $x,x'\in \R^d$ and $\mu_1,\mu_2\in \scr{P}_1(\R^d)$,
\begin{equation}\label{e:addremark}\left|\int_{\R^d}\tilde b(x,z)\,\mu_1(dz)-\int_{\R^d}\tilde b(x',z)\,\mu_2(dz)\right|\le L_{\tilde b} (|x-x'|+\bb W_1(\mu_1,\mu_2));\end{equation} see \cite[Remark 3.4]{LMW}.

\begin{proposition}\label{prp-st}
Under Assumptions {\rm \textbf{(A1)}} and {\rm\textbf{(A2)}}, the SDE \eqref{Equation} has a unique strong solution $(X_t,Y_t)_{t\ge0}$, which also has weak well-posedness.
\end{proposition}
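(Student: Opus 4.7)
The plan is to cast the SDE \eqref{Equation} into the general McKean--Vlasov framework covered by Theorem~\ref{app} in the Appendix and then verify the standing assumptions of that theorem. Writing $Z_t=(X_t,Y_t)\in\R^{2d}$, the system takes the form $dZ_t=B(Z_t,\mu^Z_t)\,dt+d\tilde L_t$ where $\tilde L_t=(0,L_t)$ is a pure jump L\'evy process on $\R^{2d}$ with L\'evy measure supported in $\{0\}\times\R^d$ satisfying $\int(|z|\wedge|z|^2)\,\nu(dz)<\infty$, and where, for $(x,y)\in\R^{2d}$ and $\mu\in\scr{P}_1(\R^{2d})$ with first marginal $\mu_{(1)}$,
\begin{equation*}
B((x,y),\mu)=\Bigl(y,\;b(x)+\int_{\R^d}\tilde b(x,z)\,\mu_{(1)}(dz)-\gamma y\Bigr).
\end{equation*}
The L\'evy measure of $\tilde L$ satisfies the integrability condition required by Theorem~\ref{app} since it inherits this property from $\nu$.

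The second step is to verify the global Lipschitz property of the drift $B$ on $\R^{2d}\times\scr{P}_1(\R^{2d})$ endowed with the product of Euclidean distance and $\bb W_1$. For the state variable, Assumption {\rm\textbf{(A1)}}, Assumption {\rm\textbf{(A2)}} applied with fixed measure, and the explicit appearance of $y$ and $\gamma y$ yield a Lipschitz bound in $(x,y)$ depending on $L_b,L_{\tilde b}$ and $\gamma$. For the measure argument, the key input is the consequence \eqref{e:addremark} of Assumption~{\rm\textbf{(A2)}}, which implies
\begin{equation*}
\Bigl|\int_{\R^d}\tilde b(x,z)\,\mu_{(1)}(dz)-\int_{\R^d}\tilde b(x,z)\,\mu'_{(1)}(dz)\Bigr|\le L_{\tilde b}\,\bb W_1(\mu_{(1)},\mu'_{(1)})\le L_{\tilde b}\,\bb W_1(\mu,\mu'),
\end{equation*}
since projection onto the first marginal is a $1$-Lipschitz map for $\bb W_1$. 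Combining these two bounds gives a joint Lipschitz estimate of the form $|B((x,y),\mu)-B((x',y'),\mu')|\le C(|x-x'|+|y-y'|+\bb W_1(\mu,\mu'))$ for a constant $C$ depending only on $L_b,L_{\tilde b}$ and $\gamma$. Linear growth in $((x,y),\mu)$ follows from the Lipschitz bound combined with $|B(0,\delta_0)|<\infty$.

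Once these verifications are in place, Theorem~\ref{app} immediately delivers the existence and pathwise uniqueness of a strong solution $(X_t,Y_t)_{t\ge 0}$ for every initial distribution with finite first moment, together with weak well-posedness in the Yamada--Watanabe sense. No obstacle is serious here; the only point that requires a little care is to observe that the abstract measure-Lipschitz condition needed by Theorem~\ref{app} (which is naturally phrased for measures on the state space $\R^{2d}$) reduces to the marginal bound \eqref{e:addremark} via the projection argument. Once this translation is made, the verification is routine and the conclusion is obtained directly by applying the appendix theorem.
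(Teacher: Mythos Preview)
Your proposal is correct and follows essentially the same approach as the paper: reformulate \eqref{Equation} as a McKean--Vlasov SDE on $\R^{2d}$ and apply Theorem~\ref{app} after checking its hypotheses. The only cosmetic difference is that the paper verifies the one-sided condition~\eqref{Lip} directly via an inner-product computation, whereas you establish the stronger full Lipschitz bound on $B$ (which is indeed available under {\rm\textbf{(A1)}}--{\rm\textbf{(A2)}} and immediately implies~\eqref{Lip}); your linear-growth argument likewise yields~\eqref{Growth}.
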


\begin{proof} Let $h=\left(
\begin{array}{c}
   x\\
y
   \end{array}
\right)\in \R^{2d}$ with $x,y\in \R^d$. For any $t>0$, set
$$
H_t:=
\left(
\begin{array}{c}
   X_t\\
   Y_t
   \end{array}
\right),\quad
b(H_t,\scr{L}_{H_t}):=
\left(
\begin{array}{c}
   Y_t\\
   b(X_t)+\displaystyle\int_{\R^d}\tilde{b}(X_t,z)\,\mu^X_t(dz)-\ga Y_t
   \end{array}
\right)$$ and $$
\si:=
\left(
\begin{array}{cc}
   \mathbf{0} & \mathbf{0}\\
   \mathbf{0} & \mathbf{I}\\
   \end{array}
\right),\quad
\hat{L}_t:=
\left(
\begin{array}{c}
   L_t'\\
   L_t
   \end{array}
\right),$$
where $\mu^X_t$ denotes the law of $X_t$, $(L_t')_{t\ge0}$ is a L\'evy process on $\R^d$ that is an independent version of $(L_t)_{t\ge0}$, and $\mathbf{I}$ and $\mathbf{0}$ denote the $d\times d$ unit matrix and the $d\times d$ null matrix respectively. Then, we can rewrite the SDE \eqref{Equation} as follows \begin{equation}\label{reEquation}
dH_t=b(H_t,\scr{L}_{H_t})\,dt+\si d\hat{L}_t.
\end{equation}
Now, under Assumptions \textbf{(A1)} and \textbf{(A2)}, we claim that Assumptions \textbf{(H1)} and \textbf{(H2)} in the Appendix are satisfied.

Indeed, it follows from Assumptions \textbf{(A1)} and \textbf{(A2)} that for all $h_1,h_2\in\R^{2d}$ and $\hat{\mu},\tilde{\mu}\in\scr{P}_1(\R^{2d})$,
\begin{align*}
&\langle b(h_1,\hat{\mu})-b(h_2,\tilde{\mu}),h_1-h_2\rangle\\
&=\Bigg\langle\left(
\begin{array}{c}
   y_1\\
   b(x_1)+\displaystyle \int_{\R^d}\tilde{b}(x_1,z)\,\hat{\mu}^X(dz)-\ga y_1
   \end{array}
\right)
-
\left(
\begin{array}{c}
   y_2\\
   b(x_2)+\displaystyle \int_{\R^d}\tilde{b}(x_2,z)\,\tilde{\mu}^X(dz)-\ga y_2
   \end{array}
\right),\,
\left(
\begin{array}{c}
   x_1-x_2\\
   y_1-y_2
   \end{array}
\right)
\Bigg\rangle\\
&\le |y_1-y_2|\cdot|x_1-x_2|+L_b|x_1-x_2|\cdot|y_1-y_2|+L_{\tilde{b}}\left(|x_1-x_2|+\bb{W}_1(\hat{\mu},\tilde{\mu})\right)\cdot|y_1-y_2|-\ga|y_1-y_2|^2\\
&\le (1+L_b+L_{\tilde{b}})\left(|h_1-h_2|^2+|h_1-h_2|\bb{W}_1(\hat{\mu},\tilde{\mu})\right),
\end{align*}
where in the equality above $\hat{\mu}^X$ (resp.\ $\tilde{\mu}^X$) denotes the marginal distribution of $X$ with respect to $\hat{\mu}$ (resp.\ $\tilde \mu$),
in the first inequality we used \eqref{e:addremark}
 and its proof (see \cite[Remark 3.4]{LMW}) as well as
 the fact that $\displaystyle\int_{\R^d}\tilde{b}(x_1,z)\,\hat{\mu}^X(dz)=\displaystyle\int_{\R^{2d}}\tilde{b}(x_1,z)\,\hat{\mu}(dz\,dw)$ for the marginal distribution $\hat{\mu}^X(dz)$ of $\hat{\mu}(dz\,dw)$, and the last inequality immediately  follows from the property that $\max\{ |x_1-x_2|, |y_1-y_2|\}\le |h_1-h_2|.$

On the other hand, due to Assumption \textbf{(A2)} again,  for all $\mu\in\scr{P}_1(\R^{2d})$,
\begin{align*}
|b(0,\mu)|&=
\left|\left(
\begin{array}{c}
   0\\
   b(0)+\displaystyle\int_{\R^d}\tilde{b}(0,z)\,\mu^X(dz)
   \end{array}
\right)\right| \le|b(0)|+\displaystyle\int_{\R^d}|\tilde{b}(0,z)|\,\mu^X(dz)\\
&\le|b(0)|+\displaystyle\int_{\R^d}(|\tilde{b}(0,0)|+L_{\tilde{b}}|z|)\,\mu^X(dz)\le \max\{ |b(0)|+|\tilde b(0,0)|, L_{\tilde{b}}\} (1+\mu(|\cdot|)).
\end{align*}

Therefore, according to Theorem \ref{app} in the Appendix, the SDE \eqref{reEquation}, and so the SDE \eqref{Equation}, has a unique strong solution $(X_t,Y_t)_{t\ge0}$.
\end{proof}

It is known that the solution to the distribution dependent SDE \eqref{Equation} does not enjoy the Markov property. In order to make full use of the Markov coupling approach, we have to recourse to its decoupled version (by freezing distribution). In the following, denote by $\mu_t={\rm Law}((X_t, Y_t))=:(\mu_t^X,\mu_t^Y)$ if $\mu:={\rm Law}((X_0, Y_0))$. For any $\mu\in\scr{P}_1(\R^{2d})$, we consider the decoupled SDE associated with \eqref{Equation}:
\begin{equation}\left\{\begin{array}{l}\label{decoupled1}
dX^{\mu }_t=Y^{\mu }_t\,dt,\\
dY^{\mu }_t=\left(b(X^{\mu }_t)+\displaystyle\int_{\R^d}\tilde{b}(X^{\mu }_t,z)\,\mu^{X}_{t}(dz)-\ga Y^{\mu}_t\right)\,dt+dL_t.\end{array}\right.
\end{equation}
Note that, for fixed $\mu\in\scr{P}_1(\R^{2d})$, \eqref{decoupled1} is a classical (distribution-independent) SDE with time-dependent drift. It is easy to see that, under Assumptions \textbf{(A1)} and \textbf{(A2)}, the SDE \eqref{decoupled1} has  a unique strong solution $(X^{\mu}_t,Y^{\mu}_t)_{t\ge0}$ for any initial condition; moreover, for any fixed ${\mu}\in\scr{P}_1(\R^{2d})$, the process $(X^{\mu}_t,Y^{\mu}_t)_{t\ge0}$ has the Markov property and the generator of the process $(X^{\mu}_t,Y^{\mu}_t)_{t\ge0}$, acting on $f\in C^2_b(\R^{2d})$, is given by
\begin{equation}\label{L}
\begin{split}
(\scr{L}_{[{\mu}],t}f)(x,y)
&=\big\langle y, \nabla_xf(x,y)\big\rangle+\bigg\langle b(x)+\displaystyle\int_{\R^d}\tilde{b}(x,z)\, {\mu}^X_t(dz)-\ga y, \nabla_yf(x,y)\bigg\rangle\\
&\quad+\int_{\R^d}\big(f(x,y+z)-f(x,y)-\langle\nabla_yf(x,y),z\rangle\I_{\{|z|\le1\}}\big)\,\nu(dz)\\
&=: (\scr{L}_{[{\mu}],t, 0}f)(x,y)+(\scr{L}_1 f)(x,y),
\end{split}
\end{equation} where $\nabla_x $ is the first order gradient operator with respect to the variable $x$ and the same for $\nabla_y$.
Furthermore, via the strong well-posedness (so the weak well-posedness) of \eqref{Equation}, we obtain that $\mu_{t}={\rm Law}((X^{\mu}_t,Y^{\mu}_t))$ if $\mu={\rm Law}((X^{\mu}_0,Y^{\mu}_0))$.
Therefore, in order to obtain Theorem $\ref{Main theorem}$, thanks to Remark \ref{Remark} it suffices to establish the following exponential contraction for the inhomogeneous Markov process $(X^{\mu}_t,Y^{\mu}_t)_{t\ge0}$ solving SDE \eqref{decoupled1}:
\begin{equation}\label{Wde}
\bb W_{\rho}(\mu_0 P^{\mu}_t,\bar{\mu}_0 P^{\bar{\mu}}_t)\le e^{-\la t}\bb W_{\rho}(\mu_0,\bar{\mu}_0),
\end{equation}
where
$\mu_0 P^{\mu}_t$ and
$\bar{\mu}_0 P^{\bar \mu}_t$ stand for the laws of the solutions $(X^{\mu}_t,Y^{\mu}_t)_{t\ge0}$ and $(X^{\bar \mu}_t,Y^{\bar \mu}_t)_{t\ge0}$ to the SDE \eqref{decoupled1} with initial distribution
$\mu_0$ and
$\bar{\mu}_0$ respectively, and $\bb W_{\rho}$ is the $L^1$-Wasserstein distance mentioned in Remark \ref{Remark}.

\subsection{Coupling operator for the decoupled SDE \eqref{decoupled1} } \label{sub coupling operator}

In this subsection, we will construct a coupling operator for $\scr{L}_{[{\mu}],t}$ defined by \eqref{L}. For this, we have to fully use the refined basic coupling and the synchronous coupling for pure jump L\'evy processes.
We start with a coupling for the L\'evy measure $\nu$, and introduce the notation
$$
x\rightarrow x+z, \quad \nu(dz),
$$
which means a transition from a point $x$ to the point $x+z$ with the jump intensity $\nu(dz)$.
Instead of considering the process $(X^{\mu}_t,Y^{\mu}_t)_{t\ge0}$ directly, we will study its transformation $(X^{\mu}_t,\widetilde{Y}^{\mu}_t)_{t\ge0}$ (see \cite[Section 2]{BW}):
\begin{equation}\left\{\begin{array}{l}\label{New equation}
dX^{\mu}_t=(-\ga X^{\mu}_t+\ga \widetilde{Y}^{\mu}_t)\,dt,\\
d\widetilde{Y}^{\mu}_t=\ga^{-1}\left(b(X^{\mu}_t)+\displaystyle\int_{\R^d}\tilde{b}(X^{\mu}_t,z)\,\mu^{X}_t(dz)\right)\,dt+\ga^{-1}dL_t;\end{array}\right.\\
\end{equation}
that is, for any $t>0$, $\widetilde{Y}^{\mu}_t:=X^{\mu}_t+\ga^{-1}Y^{\mu}_t$.

 Roughly speaking, the essential idea of the basic coupling is to make two marginal processes jump to the same point with the biggest possible rate, where the biggest jump rate is the maximal common part of the jump intensities. In the L$\'{e}$vy setting, for $x\in\R^d$, the maximal common part takes the form
\begin{equation}\label{a}
\nu_x(dz):=\big(\nu\wedge(\delta_x\ast\nu)\big)(dz),
\end{equation}
where $\delta_x$ is the Dirac measure or the unit mass at the point $x$.
Then, according to the strategy of \cite[Section 2.1]{LW} and the structure of the modified SDE \eqref{New equation}, \emph{the refined basic coupling} of  the pure jump L\'evy process $(L_t)_{t\ge0}$ is constructed via the following relationship: for any $x,\,x',\,y,\,y'\in\R^d$ with $q:=x-x'+\ga^{-1}(y-y')$
\begin{equation}\label{Refined basic coupling}
(y,y')\longrightarrow
\begin{cases}
(y+z,\,y'+z+\ga(q)_\ka),    &\frac{1}{2}\nu_{-\ga(q)_\ka}(dz),\\
(y+z,\,y'+z-\ga(q)_\ka),    &\frac{1}{2}\nu_{\ga(q)_\ka}(dz),\\
(y+z,\,y'+z),    &(\nu-\frac{1}{2}\nu_{-\ga(q)_\ka}-\frac{1}{2}\nu_{\ga(q)_\ka})(dz),\\
\end{cases}
\end{equation}
where $\ka>0$ is given in Assumption \textbf{(A0)}, and $\nu_{\ga(q)_\ka}$ (resp.\, $\nu_{-\ga(q)_\ka}$) is defined by \eqref{a} with $x=\ga(q)_\ka$ (resp.\, $x=-\ga(q)_\ka$). Here and in what follows, for any $x\in\R^d$, define
$$
(x)_{\ka}=\left(1\wedge\frac{\kappa}{|x|}\right)x,
$$
and we make the convention that $(x)_\kappa=0$ for $x=0$. Furthermore, the operator associated with \eqref{Refined basic coupling} can be expressed as follows: for $g\in C^2_b(\R^{2d})$,
\begin{equation}\label{Lxx'}
\begin{split}
\big(\widetilde{\scr{L}}_{x,x'}g\big)(y,y')
&:=\frac{1}{2}\int_{\R^d}\big(g(y+z,y'+z+\ga(q)_\ka)-g(y,y')-\big\langle\nabla_yg(y,y'),z\big\rangle\I_{\{|z|\le1\}}\\
&\qquad\qquad\,\,-\big\langle\nabla_{y'}g(y,y'),z+\ga(q)_\ka\big\rangle\I_{\{|z+\ga(q)_\ka|\le1\}}\big)\nu_{-\ga(q)_\ka}(dz)\\
&\quad+\frac{1}{2}\int_{\R^d}\big(g(y+z,y'+z-\ga(q)_\ka)-g(y,y')-\big\langle\nabla_yg(y,y'),z\big\rangle\I_{\{|z|\le1\}}\\
&\qquad\qquad\quad-\big\langle\nabla_{y'}g(y,y'),z-\ga(q)_\ka\big\rangle\I_{\{|z-\ga(q)_\ka|\le1\}}\big)\nu_{\ga(q)_\ka}(dz)\\
&\quad+\int_{\R^d}\big(g(y+z,y'+z)-g(y,y')-\big\langle\nabla_yg(y,y'),z\big\rangle\I_{\{|z|\le1\}}\\
&\qquad\qquad\,-\big\langle\nabla_{y'}g(y,y'),z\big\rangle\I_{\{|z|\le1\}}\big)\left(\nu-\frac{1}{2}\nu_{-\ga(q)_\ka}-\frac{1}{2}\nu_{\ga(q)_\ka}\right)(dz).
\end{split}\end{equation}

Naturally, the synchronous coupling of  the pure jump L\'evy process $(L_t)_{t\ge0}$ is given
for any $x,\,x',\,y,\,y'\in\R^d$ by
$$
(y,y')\longrightarrow
(y+z,\,y'+z),     \quad  \nu(dz),
$$
 and the associated operator is of the following form
\begin{equation}
\begin{split}\label{Lxx''}
\big(\widetilde{\scr{L}}^\ast_{x,x'}g\big)(y,y')
&:=\int_{\R^d}\big(g(y+z,y'+z)-g(y,y')-\big\langle\nabla_yg(y,y'),z\big\rangle\I_{\{|z|\le1\}}\\
&\qquad\quad\,\,\,-\big\langle\nabla_{y'}g(y,y'),z\big\rangle\I_{\{|z|\le1\}}\big)\,\nu(dz).
\end{split}\end{equation}

In the present setting we will apply the  refined basic coupling and the synchronous coupling simultaneously. More explicitly,  we only use the synchronous coupling when the distance of two marginal processes is large enough, otherwise we will apply the refined basic coupling.~To describe the regions that these two different couplings apply, we further need some more notations. For any $x$, $x'$, $y$, $y'\in\R^d$, set
$$
v:=x-x', \quad w:=y-y', \quad q:=v+\ga^{-1}w
$$
and
\begin{equation}\label{e:metric1} r_s((x,y),(x',y')):=\al|v|+|q|,  \quad r^2_l((x,y),(x',y')):=A|v|^2+B\langle v,w\rangle+C|w|^2,
\end{equation}
where $\al$, $A$, $B$ and $C$ are positive constants determined later.~Note that, with explicit
choice of these constants (see \eqref{al} and \eqref{rl} below), we will obtain that $r_s((x,y),(x',y'))$ and $r_l((x,y),(x',y'))$ are equivalent metrics on $\R^{2d}$, and both of them are equivalent to the Euclidean distance; see Lemma \ref{lemma1-app} in the Appendix. In particular, there exist constants $\ep$, $\vare\in (0,{1}/{2}]$ such that
$$
2\ep r_l((x,y),(x',y'))\le r_s((x,y),(x',y'))$$ and $$\vare r_s((x,y),(x',y'))\le r_l((x,y),(x',y'))
$$ for any $x$, $x'$, $y$, $y'\in\R^d$.
Furthermore, we define
$$
\De((x,y),(x',y')):=r_s((x,y),(x',y'))-\ep r_l((x,y),(x',y'))
$$ and
\begin{equation}\label{e:Gamma}
D_\Ga:=\sup_{((x,y),(x',y'))\in\Ga}\De((x,y),(x',y')),
\end{equation}
where $\Ga\subset\R^{4d}$ is a compact set given by
$$
\Ga:=\{
((x,y),(x',y'))\in \R^{4d}:r^2_l((x,y),(x',y'))\le\cR\}
$$
with $\cR$ to be chosen later (see \eqref{l}).

\ \

With discussions above, for any fixed $\hat{\mu},\tilde{\mu}\in\scr{P}_1(\R^{2d})$, $x,x',y,y'\in\R^d$, $t>0$ and $f\in C^2_b(\R^{4d})$, define
\begin{equation}\label{Coupling operator}
\begin{split}
&\big(\widetilde{\scr{L}}_{[\hat{\mu},\tilde{\mu}],t}f\big)\big((x,y),(x',y')\big)\\
&=\big(\widetilde{\scr{L}}_{[\hat{\mu},\tilde{\mu}],t,0}f\big)\big((x,y),(x',y')\big)+\big(\widetilde{\scr{L}}_{x,x'}\big)f\big((x,\cdot),(x',\cdot)\big)(y,y')\I_{\{\De((x,y),(x',y'))\le D_\Ga\}} \\
&\quad+\big(\widetilde{\scr{L}}^\ast_{x,x'}\big)f\big((x,\cdot),(x',\cdot)\big)(y,y')\I_{\{\De((x,y),(x',y'))>D_\Ga\}},
\end{split}\end{equation}
where $\widetilde{\scr{L}}_{x,x'}$ and $\widetilde{\scr{L}}^*_{x,x'}$ are given by \eqref{Lxx'} and \eqref{Lxx''} respectively, and
\begin{equation}\label{Ll}\begin{split}
&\big(\widetilde{\scr{L}}_{[\hat{\mu},\tilde{\mu}],t,0}f\big)\big((x,y),(x',y')\big)\\
&:=\bigg\langle b(x)+\int_{\R^d}\tilde{b}(x,z)\,\hat{\mu}_t^X(dz)-\ga y, \nabla_yf\big((x,y),(x',y')\big)\bigg\rangle+\big\langle y, \nabla_xf\big((x,y),(x',y')\big)\big\rangle\\
&\quad\,\,\,+\bigg\langle b(x')+\int_{\R^d}\tilde{b}(x',z')\,\tilde{\mu}^X_t(dz')-\ga y', \nabla_{y'}f\big((x,y),(x',y')\big)\bigg\rangle+\big\langle y', \nabla_{x'}f\big((x,y),(x',y')\big)\big\rangle.
\end{split}\end{equation}

\begin{proposition}\label{b}
The operator $\widetilde{\scr{L}}_{[\hat{\mu},\tilde{\mu}],t}$ defined by \eqref{Coupling operator} is a coupling operator of $\scr{L}_{[\hat{\mu}],t}$ and $\scr{L}_{[\tilde{\mu}],t}$ given by \eqref{L}; i.e.,  for any $h,\,g\in C^2_b(\R^{2d})$,
$$
\big(\widetilde{\scr{L}}_{[\hat{\mu},\tilde{\mu}],t}f\big) ((x,y),(x',y'))=(\scr{L}_{[\hat{\mu}],t}h)(x,y)+(\scr{L}_{[\tilde{\mu}],t}g)(x',y'),
$$
where $f ((x,y),(x',y') ):=h(x,y)+g(x',y')$.
\end{proposition}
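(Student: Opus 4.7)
The proof is a direct verification that reduces, once $f((x,y),(x',y'))=h(x,y)+g(x',y')$ is plugged in, to showing (a) linearity of the drift part and (b) the marginal-intensity identity for each of the two jump-coupling operators. First I would observe that for $f$ of this separated form, all mixed gradients vanish: $\nabla_x f=\nabla_x h$, $\nabla_y f=\nabla_y h$, $\nabla_{x'} f=\nabla_{x'} g$, and $\nabla_{y'} f=\nabla_{y'} g$. Thus $\widetilde{\scr{L}}_{[\hat\mu,\tilde\mu],t,0}(h+g)$, being linear in $\nabla f$, splits immediately into $\scr{L}_{[\hat\mu],t,0}h(x,y)+\scr{L}_{[\tilde\mu],t,0}g(x',y')$ (the drift pieces of the two marginal generators in \eqref{L}).

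For the jump part, since the indicators $\I_{\{\Delta\le D_\Gamma\}}$ and $\I_{\{\Delta> D_\Gamma\}}$ partition $\R^{4d}$, it suffices to check separately that each of the two operators $\widetilde{\scr{L}}_{x,x'}$ and $\widetilde{\scr{L}}^\ast_{x,x'}$ satisfies the marginal property
\begin{equation*}
\widetilde{\scr{L}}_{x,x'}\big((h(x,\cdot)+g(x',\cdot)\big)(y,y')=(\scr{L}_1 h)(x,y)+(\scr{L}_1 g)(x',y'),
\end{equation*}
and likewise for $\widetilde{\scr{L}}^\ast_{x,x'}$. The synchronous case is immediate from \eqref{Lxx''}, since both marginals jump by the same $z$ with intensity $\nu(dz)$ and the $h$-terms and $g$-terms decouple.

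The refined basic coupling case is the only nontrivial point. For the $h$-contribution from \eqref{Lxx'}, the three measures $\tfrac12\nu_{-\gamma(q)_\kappa}$, $\tfrac12\nu_{\gamma(q)_\kappa}$, and $\nu-\tfrac12\nu_{-\gamma(q)_\kappa}-\tfrac12\nu_{\gamma(q)_\kappa}$ sum to $\nu$, and in each case $y$ jumps by $z$, so the three integrals recombine into $\int\bigl(h(x,y+z)-h(x,y)-\langle\nabla_y h,z\rangle\I_{\{|z|\le1\}}\bigr)\,\nu(dz)=(\scr{L}_1 h)(x,y)$. For the $g$-contribution I would exploit the key identity
\begin{equation*}
\int_{\R^d}\phi(z+\gamma(q)_\kappa)\,\nu_{-\gamma(q)_\kappa}(dz)=\int_{\R^d}\phi(u)\,\nu_{\gamma(q)_\kappa}(du),
\end{equation*}
which follows directly from the definition $\nu_x=\nu\wedge(\delta_x\ast\nu)$ via the change of variables $u=z+\gamma(q)_\kappa$ (and the symmetric identity for the $-\gamma(q)_\kappa$ shift). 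Applying this to the first jump term turns $\tfrac12\nu_{-\gamma(q)_\kappa}(dz)$ into $\tfrac12\nu_{\gamma(q)_\kappa}(du)$ for the $y'$-jump by $u$, and the compensator $\langle\nabla_{y'}g,z+\gamma(q)_\kappa\rangle\I_{\{|z+\gamma(q)_\kappa|\le1\}}$ becomes $\langle\nabla_{y'}g,u\rangle\I_{\{|u|\le1\}}$. An identical change of variables handles the second term, after which the three pieces again add up to $\nu$ and reassemble into $(\scr{L}_1 g)(x',y')$.

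The only step requiring any care is the change-of-variables identity above: this is essentially the observation that $\nu_{-x}$ pushed forward by translation by $x$ coincides with $\nu_x$, which is what makes the refined basic coupling a genuine coupling. Once this is verified, adding the drift and jump pieces yields $(\scr{L}_{[\hat\mu],t}h)(x,y)+(\scr{L}_{[\tilde\mu],t}g)(x',y')$ in both branches of the definition \eqref{Coupling operator}, and the proposition follows.
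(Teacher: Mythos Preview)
Your proof is correct and follows essentially the same approach as the paper: both reduce to checking the marginal property for the drift part (trivial by linearity), the synchronous jump operator (trivial), and the refined basic coupling operator, with the latter handled by summing the three measures to $\nu$ for the first marginal and invoking the translation identity $\nu_{-\gamma(q)_\kappa}(d(z-\gamma(q)_\kappa))=\nu_{\gamma(q)_\kappa}(dz)$ for the second. Your integral formulation of this identity is exactly equation~\eqref{e} in the paper.
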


\begin{proof}  Recall that $\scr{L}_{[{\mu}],t, 0}$ and $\scr{L}_1$ are defined
 in \eqref{L}. Then, it is easy to see that $\widetilde{\scr{L}}_{[\hat{\mu},\tilde{\mu}],t,0}$ defined by \eqref{Ll} is a coupling operator of
$\scr{L}_{[\hat{\mu}],t, 0}$ and $\scr{L}_{[\tilde{\mu}],t, 0}$, and $\widetilde{\scr{L}}^*_{x,x'}$ defined by \eqref{Lxx''} is a coupling operator of $\scr{L}_1$ and itself.
Therefore, it suffices to verify that $\widetilde{\scr{L}}_{x,x'}$ defined by \eqref{Lxx'} is a coupling operator of $\scr{L}_1$ and itself.

Firstly, let $f(y,y')=h(y)$ for any $y,\,y'\in\R^d$. It obviously holds that
\begin{align*}
\big(\widetilde{\scr{L}}_{x,x'}f\big)(y,y')
&=\frac{1}{2}\int_{\R^d}\big(h(y+z)-h(y)-\big\langle\nabla_yh(y),z\big\rangle\I_{\{|z|\le1\}}\big)\nu_{-\ga(q)_\ka}(dz)\\
&\quad+\frac{1}{2}\int_{\R^d}\big(h(y+z)-h(y)-\big\langle\nabla_yh(y),z\big\rangle\I_{\{|z|\le1\}}\big)\nu_{\ga(q)_\ka}(dz)\\
&\quad+\int_{\R^d}\big(h(y+z)-h(y)-\big\langle\nabla_yh(y),z\big\rangle\I_{\{|z|\le1\}}\big)\left(\nu-\frac{1}{2}\nu_{-\ga(q)_\ka}-\frac{1}{2}\nu_{\ga(q)_\ka}\right)(dz)\\
&=\scr{L}_1h(y).
\end{align*}
Secondly, let $f(y,y')=g(y')$ for any $y,\,y'\in\R^d$. Then
\begin{align*}
\big(\widetilde{\scr{L}}_{x,x'}f\big)(y,y')
&=\frac{1}{2}\int_{\R^d}\big(g(y'+z+\ga(q)_\ka)-g(y')-\big\langle\nabla_{y'}g(y'),z+\ga(q)_\ka\big\rangle\I_{\{|z+\ga(q)_\ka|\le1\}}\big)\nu_{-\ga(q)_\ka}(dz)\\
&\quad+\frac{1}{2}\int_{\R^d}\big(g(y'+z-\ga(q)_\ka)-g(y')-\big\langle\nabla_{y'}g(y'),z-\ga(q)_\ka\big\rangle\I_{\{|z-\ga(q)_\ka|\le1\}}\big)\nu_{\ga(q)_\ka}(dz)\\
&\quad+\int_{\R^d}\big(g(y'+z)-g(y')-\big\langle\nabla_{y'}g(y'),z\big\rangle\I_{\{|z|\le1\}}\big)\left(\nu-\frac{1}{2}\nu_{-\ga(q)_\ka}-\frac{1}{2}\nu_{\ga(q)_\ka}\right)(dz)\\
&=\frac{1}{2}\int_{\R^d}\big(g(y'+z)-g(y')-\big\langle\nabla_{y'}g(y'),z\big\rangle\I_{\{|z|\le1\}}\big)\nu_{\ga(q)_\ka}(dz)\\
&\quad+\frac{1}{2}\int_{\R^d}\big(g(y'+z)-g(y')-\big\langle\nabla_{y'}g(y'),z\big\rangle\I_{\{|z|\le1\}}\big)\nu_{-\ga(q)_\ka}(dz)\\
&\quad+\int_{\R^d}\big(g(y'+z)-g(y')-\big\langle\nabla_{y'}g(y'),z\big\rangle\I_{\{|z|\le1\}}\big)\left(\nu-\frac{1}{2}\nu_{-\ga(q)_\ka}-\frac{1}{2}\nu_{\ga(q)_\ka}\right)(dz)\\
&=\scr{L}_1g(y'),
\end{align*}
where in the second equality we have changed the variables $z+\ga(q)_\ka\mapsto z$ and $z-\ga(q)_\ka\mapsto z$, respectively, and used the facts
\begin{equation}\label{e}
\nu_{-\ga(q)_\ka}(d(z-\ga(q)_\ka))=\nu_{\ga(q)_\ka}(dz), \quad \nu_{\ga(q)_\ka}(d(z+\ga(q)_\ka))=\nu_{-\ga(q)_\ka}(dz).
\end{equation}
Combining with both equalities above, we get the desired assertion.
\end{proof}

\subsection{Coupling process for the decoupled SDE \eqref{decoupled1} }
In this subsection, we shall construct explicitly the coupling process associated with the coupling operator $\widetilde{\scr{L}}_{[\hat{\mu},\tilde{\mu}],t}$ defined by \eqref{Coupling operator}.

Firstly, by the L\'evy-It\^o decomposition, $(L_t)_{t\ge0}$ can be represented as below
$$
L_t=\int_0^t\int_{\{|z|>1\}}z\,N(ds,dz)+\int_0^t\int_{\{|z|\le 1\}}z\,\wt{N}(ds,dz), \quad t\ge0,
$$
where $N(ds,dz)$ is the Poisson random measure with the intensity measure $ds\nu(dz)$, and $\wt{N}(ds,dz)$ is its compensated Poisson random measure, i.e.,
$$
\wt{N}(ds,dz)=N(ds,dz)-ds\nu(dz).
$$
Following the ideas from \cite[Section 2.2]{LMW} and \cite[Section 2]{LW}, we extend $N$ from $\RR_+\times\RR^d$ to $\RR_+\times\RR^d\times[0,1]$ in the following way
$$
N(ds,dz,du)=\sum_{0\le s'\le s,\De L_{s'}\neq0}\de_{(s',\De L_{s'})}(ds,dz)\I_{[0,1]}(du).
$$
In particular, $(L_t)_{t\ge0}$ can be reformulated as
\begin{align*}
L_{t}&=\int_0^t\int_{\{|z|>1\}\times[0,1]}z\,N(ds,dz,du)+\int_0^t\int_{\{|z|\le 1\}\times[0,1]}z\,\wt{N}(ds,dz,du)\\
&=:\int_0^t\int_{\RR^d\times[0,1]}z\,\bar{N}(ds,dz,du), \quad t\ge0.
\end{align*}

Now we consider the following SDE
\begin{equation}\left\{\begin{array}{l}\label{Coupling process}
dX^{\hat\mu}_t=Y^{\hat\mu}_tdt,\\
dY^{\hat\mu}_t=\Big(b(X^{\hat\mu}_t)+\displaystyle\int_{\R^d}\tilde{b}(X^{\hat\mu}_t,z)\,\hat\mu_t^{X}(dz)-\ga Y^{\hat\mu}_t\Big)\,dt+dL_t,\\
dX^{\tilde\mu}_t=Y^{\tilde\mu}_tdt,\\
dY^{\tilde\mu}_t=\Big(b(X^{\tilde\mu}_t)+\displaystyle\int_{\R^d}\tilde{b}(X^{\tilde\mu}_t,z)\,\tilde\mu_t^{X}(dz)-\ga Y^{\tilde\mu}_t\Big)\,dt+dL^\ast_t.\end{array}\right.\\
\end{equation}
Herein,
\begin{align*}
L^*_{t}&:=\int_0^t\int_{\R^d\times[0,1]}z\I_{\{\De((X_{s-}^{\hat\mu},Y_{s-}^{\hat\mu}),(X_{s-}^{\tilde\mu},Y_{s-}^{\tilde\mu}))>D_\Ga\}}\bar{N}(ds,dz,du)\\
&\quad\,\,+\int_0^t\int_{\R^d\times[0,1]}\big[\big(z+\ga(Q_{s-})_{\ka}\big)\I_{\{u\le\frac{1}{2}\rho(-\ga (Q_{s-})_{\ka},z)\}}\\
&\qquad\qquad\qquad\quad~~+\big(z-\ga(Q_{s-})_{\ka}\big)\I_{\{\frac{1}{2}\rho(-\ga( Q_{s-})_{\ka},z)<u\le\frac{1}{2}[\rho(-\ga(Q_{s-})_{\ka},z)+\rho(\ga( Q_{s-})_{\ka},z)]\}}\\
&\qquad\qquad\qquad\quad~~+z\I_{\{\frac{1}{2}[\rho(-\ga(Q_{s-})_{\ka},z)+\rho(\ga( Q_{s-})_{\ka},z)]<u\le1\}}\big]\I_{\{\De((X_{s-}^{\hat\mu},Y_{s-}^{\hat\mu}),(X_{s-}^{\tilde\mu},Y_{s-}^{\tilde\mu}))\le D_\Ga\}}\bar{N}(ds,dz,du)\\
&\quad\,\,-\int_0^t\int_{\R^d\times[0,1]}\big[\big(z+\ga(Q_{s-})_{\ka}\big)\big(\I_{\{|z+\ga(Q_{s-})_{\ka}|\le1\}}-\I_{\{|z|\le1\}}\big)\I_{\{u\le\frac{1}{2}\rho(-\ga (Q_{s-})_{\ka},z)\}}\\
&\qquad\qquad+\big(z-\ga(Q_{s-})_{\ka}\big)\big(\I_{\{|z-\ga(Q_{s-})_{\ka}|\le1\}}-\I_{\{|z|\le1\}}\big)\I_{\{\frac{1}{2}\rho(-\ga( Q_{s-})_{\ka},z)<u\le\frac{1}{2}[\rho(-\ga(Q_{s-})_{\ka},z)+\rho(\ga( Q_{s-})_{\ka},z)]\}}\big]\\
&\qquad\qquad\qquad\qquad\quad\times \I_{\{\De((X_{s-}^{\hat\mu},Y_{s-}^{\hat\mu}),(X_{s-}^{\tilde\mu},Y_{s-}^{\tilde\mu}))\le D_\Ga\}}\,\nu(dz)\,ds\,du,
\end{align*}
where $$Q_{t}:=V_{t}+\ga^{-1}W_t \quad \hbox{with} \quad V_t:=X^{\hat\mu}_t-X^{\tilde \mu}_t \quad \hbox{and} \quad W_t:=Y^{\hat\mu}_t-Y^{\tilde\mu}_t,$$
and
\begin{equation}\label{gg}
\rho(x,z):=\frac{\nu_x(dz)}{\nu(dz)}, \quad x,z\in\R^d.
\end{equation}
A straightforward calculation shows
\begin{equation}\label{dd}
\begin{split}dL^\ast_t=&\I_{\{\De((X_{t-}^{\hat\mu},Y_{t-}^{\hat\mu}),(X_{t-}^{\tilde\mu},Y_{t-}^{\tilde\mu}))>D_\Ga\}}\,dL_t\\
&+\I_{\{\De((X_{t-}^{\hat\mu},Y_{t-}^{\hat\mu}),(X_{t-}^{\tilde\mu},Y_{t-}^{\tilde\mu}))\le D_\Ga\}}\bigg(dL_t+\ga(Q_{t-})_\ka\int_{\R^d\times[0,1]}\Xi(\ga(Q_{t-})_\ka,z,u)\,N(dt,dz,du)\bigg)\\
=& dL_t+\I_{\{\De((X_{t-}^{\hat\mu},Y_{t-}^{\hat\mu}),(X_{t-}^{\tilde\mu},Y_{t-}^{\tilde\mu}))\le D_\Ga\}} \ga(Q_{t-})_\ka\int_{\R^d\times[0,1]}\Xi(\ga(Q_{t-})_\ka,z,u)\,N(dt,dz,du),
\end{split}\end{equation}
where, for $x,z\in\R^d$ and $u\in[0,1]$,
\begin{equation}\label{ee}
\Xi(x,z,u):=\I_{\{u\le\frac{1}{2}\rho(-x,z)\}}-\I_{\{\frac{1}{2}\rho(-x,z)<u\le\frac{1}{2}(\rho(-x,z)+\rho(x,z))\}}.
\end{equation}
Hence, according to \eqref{dd}, \eqref{Coupling process} can be rewritten as
\begin{equation}\label{ff}
\begin{split}
\left\{\begin{array}{l}
dX^{\hat\mu}_t=Y^{\hat\mu}_t\,dt,\\
dY^{\hat\mu}_t=\Big(b(X^{\hat\mu}_t)+\displaystyle\int_{\R^d}\tilde{b}(X^{\hat\mu}_t,z)\,\hat\mu_t^{X}(dz)-\ga Y^{\hat\mu}_t\Big)\,dt+dL_t,\\
dX^{\tilde\mu}_t=Y^{\tilde\mu}_tdt,\\
dY^{\tilde\mu}_t=\Big(b(X^{\tilde\mu}_t)+\displaystyle\int_{\R^d}\tilde{b}(X^{\tilde\mu}_t,z)\,\tilde\mu_t^{X}(dz)-\ga Y^{\tilde\mu}_t\Big)\,dt+
dL_t\\
\qquad\quad\,\,\,+\I_{\{\De((X_{t-}^{\hat\mu},Y_{t-}^{\hat\mu}),(X_{t-}^{\tilde\mu},Y_{t-}^{\tilde\mu}))\le D_\Ga\}}
\ga(Q_{t-})_\ka\displaystyle\int_{\R^d\times[0,1]}\Xi(\ga(Q_{t-})_\ka,z,u)\,N(ds,dz,du)
.\end{array}\right.\\
\end{split}
\end{equation}

Recall that, under Assumptions \textbf{(A1)} and \textbf{(A2)} (as well as the condition that the L\'evy measure $\nu$ of the L\'evy process $(L_t)_{t\ge0}$ satisfies $\displaystyle\int_{\R^d}(|z|\wedge|z|^2)\,\nu(dz)<\infty$), the SDE \eqref{decoupled1} has a unique strong solution $(X^{\mu}_t,Y^{\mu}_t)_{t\ge0}$. Then, by following the proof of \cite[Proposition 2.2]{LW}, one can see that \eqref{ff} also has a unique strong solution $((X^{\hat\mu}_t,Y^{\hat\mu}_t),(X^{\tilde\mu}_t,Y^{\tilde\mu}_t))_{t\ge0}$. Furthermore, the following statement indicates that $((X^{\hat\mu}_t,Y^{\hat\mu}_t),(X^{\tilde\mu}_t,Y^{\tilde\mu}_t))_{t\ge0}$ is indeed a coupling process of $(X^{\mu}_t,Y^{\mu}_t)_{t\ge0}$.

\begin{proposition}\label{coupling proof}
The infinitesimal generator of the process $((X^{\hat\mu}_t,Y^{\hat\mu}_t),(X^{\tilde\mu}_t,Y^{\tilde\mu}_t))_{t\ge0}$ is just the coupling operator $\widetilde{\scr{L}}_{[\hat\mu,\tilde\mu],t}$ given by \eqref{Coupling operator}.~Consequently, $((X^{\hat\mu}_t,Y^{\hat\mu}_t),(X^{\tilde\mu}_t,Y^{\tilde\mu}_t))_{t\ge0}$ is a coupling process of $(X^{\mu}_t,Y^{\mu}_t)_{t\ge0}$.
\end{proposition}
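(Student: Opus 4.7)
The plan is to compute the infinitesimal generator of the jointly constructed process $((X^{\hat\mu}_t,Y^{\hat\mu}_t),(X^{\tilde\mu}_t,Y^{\tilde\mu}_t))$ by applying It\^o's formula for jump SDEs to \eqref{ff} (equivalently \eqref{Coupling process}) and then matching each contribution, term by term, with the formula \eqref{Coupling operator} defining $\widetilde{\scr{L}}_{[\hat\mu,\tilde\mu],t}$. This is the same verification strategy used in the proof of \cite[Proposition 2.2]{LW} cited right before the statement, so the real task is careful bookkeeping.

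First I would fix $f\in C^2_b(\R^{4d})$ and set $F_t:=f((X^{\hat\mu}_t,Y^{\hat\mu}_t),(X^{\tilde\mu}_t,Y^{\tilde\mu}_t))$. The drift part of $dF_t$ comes from the $dt$-coefficients in \eqref{ff}, namely $Y^{\hat\mu}_t$, $b(X^{\hat\mu}_t)+\int\tilde b(X^{\hat\mu}_t,z)\hat\mu_t^X(dz)-\ga Y^{\hat\mu}_t$ together with the analogous expressions involving $(X^{\tilde\mu}_t,Y^{\tilde\mu}_t)$ and $\tilde\mu_t^X$, paired against $\nabla_x f$, $\nabla_y f$, $\nabla_{x'}f$, $\nabla_{y'}f$. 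This contribution reproduces $\widetilde{\scr{L}}_{[\hat\mu,\tilde\mu],t,0}f$ from \eqref{Ll} directly.

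Next I turn to the jump contribution, splitting according to the indicators $\I_{\{\De\le D_\Ga\}}$ and $\I_{\{\De>D_\Ga\}}$. On the set $\{\De>D_\Ga\}$, relation \eqref{dd} gives $dL^\ast_t=dL_t$, so both marginals jump synchronously by the same $z$ at intensity $\nu(dz)$; keeping track of the small-jump compensators $\I_{\{|z|\le 1\}}$ attached to each marginal yields exactly the synchronous coupling operator from \eqref{Lxx''}. On the complementary set $\{\De\le D_\Ga\}$, I would use the extended representation against $\bar N(ds,dz,du)$ with intensity $ds\,\nu(dz)\,du$ on $\R^d\times[0,1]$ and integrate out $u\in[0,1]$. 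The three prescribed intervals for $u$ in the definition of $L^\ast_t$ encode the three possible joint jumps $(z,z+\ga(Q_{t-})_\ka)$, $(z,z-\ga(Q_{t-})_\ka)$, $(z,z)$; invoking $\rho(x,z)\,\nu(dz)=\nu_x(dz)$ from \eqref{gg} and integrating the $u$-indicators against Lebesgue measure produces the three intensities $\tfrac{1}{2}\nu_{-\ga(q)_\ka}(dz)$, $\tfrac{1}{2}\nu_{\ga(q)_\ka}(dz)$ and $(\nu-\tfrac{1}{2}\nu_{-\ga(q)_\ka}-\tfrac{1}{2}\nu_{\ga(q)_\ka})(dz)$ that appear in \eqref{Lxx'}.

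The step requiring the most care, and what I expect to be the main obstacle, is matching the small-jump compensators on $\{\De\le D_\Ga\}$. In \eqref{Lxx'} the $y'$-marginal compensators are \emph{twisted}, of the form $\I_{\{|z\pm\ga(q)_\ka|\le 1\}}$ rather than $\I_{\{|z|\le 1\}}$. The last integral in the definition of $L^\ast_t$, which is compensated against $\nu(dz)\,ds\,du$, is engineered precisely to cancel the $\I_{\{|z|\le 1\}}$ compensator built into $dL_t$ and to replace it by the twisted indicators after a change of variables via \eqref{e}. Once this cancellation is checked, assembling all pieces gives $dF_t=(\widetilde{\scr{L}}_{[\hat\mu,\tilde\mu],t}f)((X^{\hat\mu}_{t-},Y^{\hat\mu}_{t-}),(X^{\tilde\mu}_{t-},Y^{\tilde\mu}_{t-}))\,dt+dM_t$ with $M_t$ a local martingale, identifying the generator as claimed. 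The ``coupling process'' assertion then follows by specializing to $f((x,y),(x',y'))=h(x,y)$ and $f((x,y),(x',y'))=g(x',y')$ and invoking Proposition \ref{b}: this reduces the marginal generators to $\scr{L}_{[\hat\mu],t}$ and $\scr{L}_{[\tilde\mu],t}$ respectively, so by strong well-posedness of \eqref{decoupled1} each marginal has the law of the corresponding solution.
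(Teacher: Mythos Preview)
Your proposal is correct and follows essentially the same route as the paper: apply It\^o's formula to \eqref{ff}, match the drift with $\widetilde{\scr{L}}_{[\hat\mu,\tilde\mu],t,0}$, split the jump part according to $\I_{\{\De\le D_\Ga\}}$ versus $\I_{\{\De>D_\Ga\}}$, integrate out $u$ using \eqref{gg}, and invoke Proposition~\ref{b} for the coupling conclusion. One small correction: the compensator matching on $\{\De\le D_\Ga\}$ does not actually require the change of variables \eqref{e}; the extra $ds\,\nu(dz)\,du$-integral in the definition of $L^\ast_t$ already carries the factor $\big(\I_{\{|z\pm\ga(q)_\ka|\le1\}}-\I_{\{|z|\le1\}}\big)$, so adding it to the $\I_{\{|z|\le1\}}$ term is a purely algebraic replacement, exactly as in the paper's third displayed equality.
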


\begin{proof}
Let $\overline{\scr L}$ be the infinitesimal generator of $((X^{\hat\mu}_t,Y^{\hat\mu}_t),(X^{\tilde\mu}_t,Y^{\tilde\mu}_t))_{t\ge0}$. Recall that $q=x-x'+\ga^{-1}(y-y')$ for any $x,\,x',\,y,\,y'\in\R^d$. Then, by the It\^{o}'s formula and \eqref{ff}, for any $f\in C_b^2(\R^{4d})$,
\begin{align*}
&(\overline{\scr L}f)((x,y),(x',y'))\\
&=(\widetilde{\scr{L}}_{[\hat{\mu},\tilde{\mu}],t,0}f)((x,y),(x',y'))\\
&\quad+\int_{\R^d\times[0,1]}\bigg(f\big((x,y+z),(x',y'+(z+\ga(q)_\ka\I_{\{\De((x,y),(x',y'))\le D_\Ga\}})\I_{\{u\le\frac{1}{2}\rho(-\ga (q)_{\ka},z)\}}\\
&\qquad\qquad\qquad\qquad+\big(z-\ga(q)_{\ka}\I_{\{\De((x,y),(x',y'))\le D_\Ga\}}\big)\I_{\{\frac{1}{2}\rho(-\ga(q)_{\ka},z)<u\le\frac{1}{2}[\rho(-\ga(q)_{\ka},z)+\rho(\ga(q)_{\ka},z)]\}}\\
&\qquad\qquad\qquad\qquad+z\I_{\{\frac{1}{2}[\rho(-\ga(q)_{\ka},z)+\rho(\ga(q)_{\ka},z)]<u\le1\}})\big)\\
&\qquad\qquad\qquad \quad-f((x,y),(x',y'))-\langle\nabla_yf((x,y),(x',y')),z\rangle\I_{\{|z|\le1\}}\\
&\qquad\qquad\qquad\qquad-\langle\nabla_{y'}f((x,y),(x',y')),(z+\ga(q)_\ka\I_{\{\De((x,y),(x',y'))\le D_\Ga\}})\I_{\{|z|\le1,u\le\frac{1}{2}\rho(-\ga (q)_{\ka},z)\}}\\
&\qquad\quad\qquad\qquad\qquad+(z-\ga(q)_{\ka}\I_{\{\De((x,y),(x',y'))\le D_\Ga\}})\I_{\{|z|\le1,\frac{1}{2}\rho(-\ga(q)_{\ka},z)<u\le\frac{1}{2}[\rho(-\ga(q)_{\ka},z)+\rho(\ga(q)_{\ka},z)]\}}\\
&\qquad\qquad\qquad\qquad\quad+z\I_{\{|z|\le1,\frac{1}{2}[\rho(-\ga(q)_{\ka},z)+\rho(\ga(q)_{\ka},z)]<u\le1\}}\rangle\bigg)\nu(dz)du\\
&\quad-\int_{\R^d\times[0,1]}\big\langle\nabla_{y'}f((x,y),(x',y')),\big((z+\ga(q)_\ka\I_{\{\De((x,y),(x',y'))\le D_\Ga\}})\big(\I_{\{|(z+\ga(q)_\ka)|\le1\}}-\I_{\{|z|\le1\}}\big)\I_{\{u\le\frac{1}{2}\rho(-\ga (q)_{\ka},z)\}}\\
&\qquad\qquad\qquad\qquad\qquad\qquad\quad+(z+\ga(q)_\ka\I_{\{\De((x,y),(x',y'))\le D_\Ga\}})\big(\I_{\{|(z+\ga(q)_\ka)|\le1\}}-\I_{\{|z|\le1\}}\big)\\
&\qquad\qquad\qquad\qquad\qquad\qquad\qquad\times \I_{\{\frac{1}{2}\rho(-\ga(q)_{\ka},z)<u\le\frac{1}{2}[\rho(-\ga(q)_{\ka},z)+\rho(\ga(q)_{\ka},z)]\}}\big)\big\rangle\,\nu(dz)\,du\\
&=(\widetilde{\scr{L}}_{[\hat{\mu},\tilde{\mu}],t,0}f)((x,y),(x',y'))\\
&\quad +\I_{\{\De((x,y),(x',y'))\le D_\Ga\}} \int_{\R^d\times[0,1]}\bigg(f\big((x,y+z),(x',y'+(z+\ga(q)_\ka)\I_{\{u\le\frac{1}{2}\rho(-\ga (q)_{\ka},z)\}}\\
&\quad\qquad\qquad\qquad\qquad\qquad\qquad\qquad\quad\,\,+\big(z-\ga(q)_{\ka}\big)\I_{\{\frac{1}{2}\rho(-\ga(q)_{\ka},z)<u\le\frac{1}{2}[\rho(-\ga(q)_{\ka},z)+\rho(\ga(q)_{\ka},z)]\}}\\
&\quad\qquad\qquad\qquad\qquad\qquad\qquad\qquad\quad\,\,+z\I_{\{\frac{1}{2}[\rho(-\ga(q)_{\ka},z)+\rho(\ga(q)_{\ka},z)]<u\le1\}})\big)\\
&\quad\qquad\qquad\qquad\qquad\qquad\qquad\qquad\,\,-f((x,y),(x',y'))-\langle\nabla_yf((x,y),(x',y')),z\rangle\I_{\{|z|\le1\}}\\
&\qquad\qquad\qquad\qquad\qquad\qquad\qquad\quad\,\,-\langle\nabla_{y'}f((x,y),(x',y')),(z+\ga(q)_\ka)\I_{\{|z|\le1,u\le\frac{1}{2}\rho(-\ga (q)_{\ka},z)\}}\\
&\qquad\qquad\qquad\qquad\qquad\qquad\qquad\qquad\,\,+(z-\ga(q)_{\ka})\I_{\{|z|\le1,\frac{1}{2}\rho(-\ga(q)_{\ka},z)<u\le\frac{1}{2}[\rho(-\ga(q)_{\ka},z)+\rho(\ga(q)_{\ka},z)]\}}\\
&\qquad\qquad\qquad\qquad\qquad\qquad\qquad\qquad\,\,+z\I_{\{|z|\le1,\frac{1}{2}[\rho(-\ga(q)_{\ka},z)+\rho(\ga(q)_{\ka},z)]<u\le1\}}\rangle\bigg)\nu(dz)du\\
&\quad-\I_{\{\De((x,y),(x',y'))\le D_\Ga\}}\int_{\R\times[0,1]}\big\langle\nabla_{y'}f((x,y),(x',y')),\big((z+\ga(q)_\ka)\big(\I_{\{|(z+\ga(q)_\ka)|\le1\}}-\I_{\{|z|\le1\}}\big)\I_{\{u\le\frac{1}{2}\rho(-\ga (q)_{\ka},z)\}}\\
&\qquad\qquad\qquad\qquad\qquad\qquad\qquad\qquad\qquad\qquad\,\, +(z+\ga(q)_\ka)\big(\I_{\{|(z+\ga(q)_\ka)|\le1\}}-\I_{\{|z|\le1\}}\big)\\
&\qquad\qquad\qquad\qquad\qquad\qquad\qquad\qquad\qquad\qquad\qquad \times \I_{\{\frac{1}{2}\rho(-\ga(q)_{\ka},z)<u\le\frac{1}{2}[\rho(-\ga(q)_{\ka},z)+\rho(\ga(q)_{\ka},z)]\}}\big)\big\rangle\,\nu(dz)\,du\\
&\quad+\I_{\{\De((x,y),(x',y'))> D_\Ga\}}\int_{\R^d\times[0,1]}\bigg(f\big((x,y+z),(x',y'+z)\big)-f((x,y),(x',y'))\\
&\qquad\qquad\qquad\qquad\qquad\qquad-\langle\nabla_yf((x,y),(x',y')),z\rangle\I_{\{|z|\le1\}}-\langle\nabla_{y'}f((x,y),(x',y')),z\rangle\I_{\{|z|\le1\}}\bigg)\nu(dz)du\\
&=(\widetilde{\scr{L}}_{[\hat{\mu},\tilde{\mu}],t,0}f)((x,y),(x',y'))\\
&\quad +\I_{\{\De((x,y),(x',y'))\le D_\Ga\}} \int_{\R^d\times[0,1]}\bigg(f\big((x,y+z),(x',y'+(z+\ga(q)_\ka)\I_{\{u\le\frac{1}{2}\rho(-\ga (q)_{\ka},z)\}}\\
&\quad\qquad\qquad\qquad\qquad\qquad\qquad\qquad\quad\,\,+\big(z-\ga(q)_{\ka}\big)\I_{\{\frac{1}{2}\rho(-\ga(q)_{\ka},z)<u\le\frac{1}{2}[\rho(-\ga(q)_{\ka},z)+\rho(\ga(q)_{\ka},z)]\}}\\
&\quad\qquad\qquad\qquad\qquad\qquad\qquad\qquad\quad\,\,+z\I_{\{\frac{1}{2}[\rho(-\ga(q)_{\ka},z)+\rho(\ga(q)_{\ka},z)]<u\le1\}})\big)\\
&\quad\qquad\qquad\qquad\qquad\qquad\qquad\qquad\,\,-f((x,y),(x',y'))-\langle\nabla_yf((x,y),(x',y')),z\rangle\I_{\{|z|\le1\}}\\
&\qquad\qquad\qquad\qquad\qquad\qquad\qquad\quad\,\,-\langle\nabla_{y'}f((x,y),(x',y')),(z+\ga(q)_\ka)\I_{\{|z+\ga(q)_{\ka}|\le1,u\le\frac{1}{2}\rho(-\ga (q)_{\ka},z)\}}\\
&\qquad\qquad\qquad\qquad\qquad\qquad\qquad\qquad\,\,+(z-\ga(q)_{\ka})\I_{\{|z-\ga(q)_{\ka}|\le1,\frac{1}{2}\rho(-\ga(q)_{\ka},z)<u\le\frac{1}{2}[\rho(-\ga(q)_{\ka},z)+\rho(\ga(q)_{\ka},z)]\}}\\
&\qquad\qquad\qquad\qquad\qquad\qquad\qquad\qquad\,\,+z\I_{\{|z|\le1,\frac{1}{2}[\rho(-\ga(q)_{\ka},z)+\rho(\ga(q)_{\ka},z)]<u\le1\}}\rangle\bigg)\nu(dz)du\\
&\quad+\I_{\{\De((x,y),(x',y'))> D_\Ga\}}\int_{\R^d\times[0,1]}\bigg(f\big((x,y+z),(x',y'+z)\big)-f((x,y),(x',y'))\\
&\qquad\qquad\qquad\qquad\qquad\qquad-\langle\nabla_yf((x,y),(x',y')),z\rangle\I_{\{|z|\le1\}}-\langle\nabla_{y'}f((x,y),(x',y')),z\rangle\I_{\{|z|\le1\}}\bigg)\nu(dz)du\\
&=\big(\widetilde{\scr{L}}_{[\hat{\mu},\tilde{\mu}],t,0}f\big)\big((x,y),(x',y')\big)+\big(\widetilde{\scr{L}}_{x,x'}\big)f\big((x,\cdot),(x',\cdot)\big)(y,y')\I_{\{\De((x,y),(x',y'))\le D_\Ga\}}\\
&\quad+\big(\widetilde{\scr{L}}^\ast_{x,x'}\big)f\big((x,\cdot),(x',\cdot)\big)(y,y')\I_{\{\De((x,y),(x',y'))>D_\Ga\}}\\
&=\big(\widetilde{\scr{L}}_{[\hat\mu,\tilde\mu],t}f\big)\big((x,y),(x',y')\big),
\end{align*} where in the
fourth equality we used
\eqref{gg}.
The proof is completed.
\end{proof}

\section{Proof of Theorem \ref{Main theorem}} \label{Proofs}
In this part, set
\begin{equation}\label{A}
\ps(r)=
\begin{cases}
\int_0^{r}e^{-g(s)}\,ds, &\quad r\in [0,2R_1], \\
\ps(2R_1)+\ps'(2R_1)(r-2R_1), & \quad r\in (2R_1,\infty),
\end{cases}
\end{equation}
where \begin{equation}\label{g}
g(r):=2\al\ga(1+k_0\al)\int_0^r\si\Big(\frac{s}{1+k_0\al}\Big)^{-1}\,ds,\quad r\in(0,2R_1],
\end{equation}
$k_0>4$, $\alpha=2L_b\gamma^{-2}$, $\sigma(r)$ is given in  Assumption \textbf{(A0)}, and $R_1$ is chosen later.
Thanks to Assumption \textbf{(A0)}, $$g'(r)\ge0,\,\,~g''(r)\le0,\,\,g'''(r)\ge0,\quad r\in(0,2R_1],$$ and so
for any $0\le\delta\le r\le R_1$,
\begin{equation}\label{e:function2}
\ps(r+\delta)+\ps(r-\delta)-2\ps(r)\le \ps''(r)\delta^2.
\end{equation} On the other hand, one can see that $\ps$ is an increasing concave function on $[0,\infty)$. Thus, for any $0\le\delta\le r$,
\begin{equation}\label{e:function1}
\ps(r+\delta)+\ps(r-\delta)-2\ps(r)\le0,
\end{equation}
and for all $r\ge0$
\begin{equation}\label{inequality}
\ps'(2R_1)r\le\ps(r)\le
r.
\end{equation} See \cite[Lemma 4.1]{LW} for the details.

Recall that for any $x$, $x'$, $y$, $y'\in\R^d$,
$$
\De((x,y),(x',y')):=r_s((x,y),(x',y'))-\ep r_l((x,y),(x',y')),
$$ where $$ r_s((x,y),(x',y')):=\al|v|+|q|,  \quad r^2_l((x,y),(x',y')):=A|v|^2+B\langle v,w\rangle+C|w|^2
$$ are defined by \eqref{e:metric1} and with $$
v:=x-x', \quad w:=y-y', \quad q:=v+\ga^{-1}w,$$  and the constant $\epsilon\in (0,1/2]$ so that $2\ep r_l((x,y),(x',y'))\le r_s((x,y),(x',y'))$ for any $x$, $x'$, $y$, $y'\in\R^d$.
In particular, it holds that
 $$ r_s((x,y),(x',y'))/2\le \De((x,y),(x',y'))\le r_s((x,y),(x',y')).$$
  We then fix $$R_1:=\sup_{((x,y),(x',y')):\De((x,y),(x',y'))\le D_\Ga}r_s((x,y),(x',y')),$$ where $D_\Ga$ is given by \eqref{e:Gamma} and with $\cR$ therein fixed later.

Next, define the metric $\rho:\R^{2d}\times\R^{2d}\rightarrow[0,\infty)$ by
\begin{equation}\label{Metric}
\rho((x,y),(x',y')):=\ps((\De((x,y),(x',y'))\wedge D_\Ga)+\ep r_l((x,y),(x',y'))).
\end{equation}
In particular, the metric $\rho$ achieves a continuous transition between $r_s((x,y),(x',y'))$ and $r_l((x,y),(x',y'))$ by considering $r_s((x,y),(x',y'))\wedge (D_\Ga+\ep r_l((x,y),(x',y')))$. More precisely,
$$
\rho((x,y),(x',y'))=
\begin{cases}
\ps(r_s((x,y),(x',y'))), &\quad \De((x,y),(x',y'))\le D_\Ga,\\
\ps(D_\Ga+\ep r_l((x,y),(x',y'))), & \quad \De((x,y),(x',y'))>D_\Ga.
\end{cases}
$$
Observe that $r_s((x,y),(x',y'))\le R_1$ when $\De((x,y),(x',y'))\le D_\Ga$, and $r^2_l((x,y),(x',y'))> \cR$ when $\De((x,y),(x',y'))> D_\Ga$.
In the following, for simplicity we write $r_s((x,y),(x',y'))$ and $r_l((x,y),(x',y'))$ as $r_s$ and $r_l$ without confusion.

With all the notations above, we start from the following simple lemma.
\begin{lemma}\label{le1}
For the coupling operator $\widetilde{\scr{L}}_{[\hat{\mu},\tilde{\mu}],t}$ given by \eqref{Coupling operator}, it holds that
\begin{equation}\label{h}\begin{split}
(\widetilde{\scr{L}}_{[\hat{\mu},\tilde{\mu}],t}\,\ps)(r_s)&=\ps'(r_s)\bigg\{\frac{1}{|q|}\bigg\langle q,\ga^{-1}(b(x)-b(x')+\int_{\R^d}\tilde{b}(x,z)\,\hat{\mu}^X_t(dz)-\int_{\R^d}\tilde{b}(x',z')\,\tilde{\mu}^X_t(dz'))\bigg\rangle\\
&\qquad\qquad\,\, -\al\ga|v|+\frac{\al\ga}{|v|}\langle v,q\rangle\bigg\}\\
&\quad+\frac{1}{2}\big(\ps(r_s-(\ka\wedge|q|))+\ps(r_s+(\ka\wedge|q|))-2\ps(r_s)\big)\nu_{\ga(q)_\ka}(\R^d)\I_{\{\Delta((x,y),(x',y'))\le D_\Gamma\}}\\
&=:\Theta_1+\Theta_2.
\end{split}\end{equation}
\end{lemma}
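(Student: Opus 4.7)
The plan is to apply the coupling operator $\widetilde{\scr{L}}_{[\hat{\mu},\tilde{\mu}],t}$ from \eqref{Coupling operator} term-by-term to the function $f((x,y),(x',y')) := \psi(r_s)$, where $r_s = \alpha|v| + |q|$ with $v = x - x'$ and $q = v + \gamma^{-1}w$, exploiting throughout that $r_s$ depends only on the differences $v$ and $w = y - y'$. Many compensator and ``third-option'' jump contributions will vanish just because of this translation invariance.

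First, I would handle the drift piece $\widetilde{\scr{L}}_{[\hat{\mu},\tilde{\mu}],t,0}$ via the chain rule. Computing $\nabla_x \psi(r_s) = \psi'(r_s)\bigl(\alpha \tfrac{v}{|v|} + \tfrac{q}{|q|}\bigr) = -\nabla_{x'}\psi(r_s)$ and $\nabla_y \psi(r_s) = \psi'(r_s)\gamma^{-1}\tfrac{q}{|q|} = -\nabla_{y'}\psi(r_s)$, the position-velocity block $\langle y,\nabla_x\rangle + \langle y',\nabla_{x'}\rangle$ collapses to $\psi'(r_s)\langle w, \alpha v/|v| + q/|q|\rangle$, while the velocity-drift block gives $\psi'(r_s)\gamma^{-1}\langle q/|q|,\,\text{(drift difference)}\rangle - \psi'(r_s)\langle w, q/|q|\rangle$. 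The two $\langle w, q/|q|\rangle$ terms cancel, and substituting $w = \gamma(q-v)$ into $\alpha\langle w, v/|v|\rangle$ produces exactly the expression $-\alpha\gamma|v| + (\alpha\gamma/|v|)\langle v,q\rangle$ appearing in $\Theta_1$.

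Next, for the jump part on $\{\Delta > D_\Gamma\}$, the synchronous coupling sends $(y,y') \to (y+z,y'+z)$, leaving $w$ (hence $r_s$) unchanged, and $\nabla_y\psi + \nabla_{y'}\psi = 0$ kills the compensator; so $\widetilde{\scr{L}}^*_{x,x'}\psi = 0$ identically. For the refined-basic-coupling piece on $\{\Delta \le D_\Gamma\}$, under $(y,y')\to(y+z,y'+z+\gamma(q)_\kappa)$ the new $q$ becomes $q - (q)_\kappa$, so $|q_{\text{new}}| = |q|-(\kappa\wedge|q|)$ and $r_s$ decreases by $\kappa\wedge|q|$; symmetrically the second sub-jump increases $r_s$ by $\kappa\wedge|q|$; and the third sub-jump leaves $r_s$ unchanged. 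All three increments are independent of $z$, so integrating against the respective measures produces only the total-mass factors. Using $\nu_{-\gamma(q)_\kappa}(\mathbb{R}^d) = \nu_{\gamma(q)_\kappa}(\mathbb{R}^d)$, these finite-difference terms combine into $\Theta_2$.

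The main obstacle is the careful bookkeeping of the small-jump compensators in the refined basic coupling. Each of the two non-trivial sub-jumps produces a compensator built from indicators of translated sets $\{|z\pm\gamma(q)_\kappa|\le 1\}$ mixed with $\{|z|\le 1\}$, which look incompatible. The trick is to change variables $z \mapsto z - \gamma(q)_\kappa$ in the $\nu_{-\gamma(q)_\kappa}(dz)$ integral and $z\mapsto z+\gamma(q)_\kappa$ in the $\nu_{\gamma(q)_\kappa}(dz)$ integral, and invoke the intertwining identity \eqref{e} to swap the two measures; one also uses $\langle q/|q|, (q)_\kappa\rangle = \kappa\wedge|q|$ to eliminate a stray term. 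After these substitutions the four small-jump compensators pair up and cancel exactly, and the third sub-jump's compensator vanishes by $\nabla_y\psi + \nabla_{y'}\psi = 0$, leaving only the symmetric second-order difference in $\Theta_2$.
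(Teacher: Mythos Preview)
Your proposal is correct and follows essentially the same route as the paper's proof: compute the four gradients of $\psi(r_s)$, evaluate the drift piece $\widetilde{\scr{L}}_{[\hat{\mu},\tilde{\mu}],t,0}$ and simplify via $w=\gamma(q-v)$, observe that the synchronous coupling annihilates $\psi(r_s)$ by translation invariance, and for the refined basic coupling compute the $z$-independent increments $r_s\mapsto r_s\mp(\kappa\wedge|q|)$ and eliminate the compensators by the change of variables $z\pm\gamma(q)_\kappa\mapsto z$ together with the intertwining identity \eqref{e}. Your account of the pairwise compensator cancellation is in fact more explicit than the paper's, which compresses that step into a single sentence.
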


\begin{proof}
Note that
$$
\nabla_y\ps(r_s)=\ps'(r_s)\frac{q}{\ga|q|}, \quad \nabla_{y'}\ps(r_s)=-\ps'(r_s)\frac{q}{\ga|q|},
$$
$$
\nabla_x\ps(r_s)=\ps'(r_s)\Big(\al\frac{v}{|v|}+\frac{q}{|q|}\Big), \quad
\nabla_{x'}\ps(r_s)=-\ps'(r_s)\Big(\al\frac{v}{|v|}+\frac{q}{|q|}\Big).
$$
Recall that $\widetilde{\scr{L}}_{[\hat{\mu},\tilde{\mu}],t,0}$, $\widetilde{\scr{L}}_{x,x'}$ and $\widetilde{\scr{L}}^\ast_{x,x'}$ are given by \eqref{Ll}, \eqref{Lxx'} and \eqref{Lxx''}, respectively.

Firstly,
\begin{align*}
&(\widetilde{\scr{L}}_{[\hat{\mu},\tilde{\mu}],t,0}\ps)(r_s)\\
&=\ps'(r_s)\left\{\frac{1}{|q|}\bigg\langle q,\ga^{-1}(b(x)-b(x')+\int_{\R^d}\tilde{b}(x,z)\,\hat{\mu}_t^X(dz)-\int_{\R^d}\tilde{b}(x',z')\,\tilde{\mu}_t^X(dz'))\bigg\rangle+\frac{\al}{|v|}\langle v,w\rangle\right\}\\
&=\ps'(r_s)\left\{\frac{1}{|q|}\bigg\langle q,\ga^{-1}(b(x)-b(x')+\int_{\R^d}\tilde{b}(x,z)\,\hat{\mu}_t^X(dz)-\int_{\R^d}\tilde{b}(x',z')\,\tilde{\mu}_t^X(dz'))\bigg\rangle-\al\ga|v|+\frac{\al\ga}{|v|}\langle v,q\rangle\right\},
\end{align*}
where in the second identity we used $q=v+\ga^{-1}w$.

Secondly,
\begin{align*}
(\widetilde{\scr{L}}_{x,x'}\,\ps)(r_s)
&=\frac{1}{2}\int_{\R^d}\big(\ps(\al|v|+|q-(q)_\ka|)-\ps(r_s)-\frac{\ps'(r_s)}{\ga|q|}\langle q,z\rangle\I_{\{|z|\le1\}}\\
&\qquad\qquad\,+\frac{\ps'(r_s)}{\ga|q|}\langle q,z+\ga(q)_\ka\rangle\I_{\{|z+\ga(q)_\ka|\le1\}}\big)\,\nu_{-\ga(q)_\ka}(dz)\\
&\quad+\frac{1}{2}\int_{\R^d}\big(\ps(\al|v|+|q+(q)_\ka|)-\ps(r_s)-\frac{\ps'(r_s)}{\ga|q|}\langle q,z\rangle\I_{\{|z|\le1\}}\\
&\qquad\qquad\quad+\frac{\ps'(r_s)}{\ga|q|}\langle q,z-\ga(q)_\ka\rangle\I_{\{|z-\ga(q)_\ka|\le1\}}\big)\,\nu_{\ga(q)_\ka}(dz)\\
&=\frac{1}{2}\big(\ps(\al|v|+|q-(q)_\ka|)+\ps(\al|v|+|q+(q)_\ka|)-2\ps(r_s)\big)\nu_{\ga(q)_\ka}(\R^d)\\
&=\frac{1}{2}\big(\ps(r_s-(\ka\wedge|q|))+\ps(r_s+(\ka\wedge|q|))-2\ps(r_s)\big)\nu_{\ga(q)_\ka}(\R^d),
\end{align*}
where in the second equality we have changed the variables $z+\ga(q)_\ka\mapsto z$ and $z-\ga(q)_\ka\mapsto z$ respectively, and used  \eqref{e}.

Lastly, it is easy to see that
$$(\widetilde{\scr{L}}^\ast_{x,x'}\,\ps)(r_s)=0.$$

Therefore, the desired assertion follows from all the conclusions above.
\end{proof}

We obtain a local contraction result in the following statement. Recall that $k_0>4$ and \begin{equation}\label{al}
\al:=2L_b\ga^{-2}.
\end{equation}

\begin{proposition}\label{Proposition1}
Suppose that Assumptions {\rm\textbf{(A0)}}--{\rm\textbf{(A2)}} hold, then it holds that for any $x$, $x'$, $y$, $y'\in\R^d$ with $\De((x,y),(x',y'))\le D_\Ga$,
\begin{equation}\label{Lc}
\widetilde{\scr{L}}_{[\hat{\mu},\tilde{\mu}],t}\,\ps(r_s)\le-c_1\ps(r_s)-\frac{\al\ga}{4}\ps'(R_1)|v|+\ga^{-1}L_{\tilde{b}}(|v|+\bb{W}_1(\hat{\mu}_t^X,\tilde{\mu}_t^X)),
\end{equation}
where
\begin{equation}\label{c1}
c_1=\min\bigg\{1, \frac{k_0-4}{4(k_0\al+1)} \bigg\}\al\ga\ps'(R_1).
\end{equation}
\end{proposition}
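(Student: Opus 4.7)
The starting point is Lemma \ref{le1}, which decomposes $\widetilde{\scr{L}}_{[\hat{\mu},\tilde{\mu}],t}\,\ps(r_s)=\Theta_1+\Theta_2$. I would first control $\Theta_1$ by applying the Lipschitz bound on $b$ from Assumption \textbf{(A1)} together with the Wasserstein estimate \eqref{e:addremark} coming from Assumption \textbf{(A2)}. Using $|\langle q,b(x)-b(x')\rangle|\le L_b|q|\,|v|$, $\tfrac{\al\gamma}{|v|}\langle v,q\rangle\le \al\gamma|q|$, the identity $\al\gamma/2=L_b\gamma^{-1}$ (from \eqref{al}), and $\ps'(r_s)\le\ps'(0)=1$ to peel off the $\bb W_1$ term, this yields
\begin{equation*}
\Theta_1\le \ps'(r_s)\Big[-\tfrac{\al\gamma}{2}|v|+\al\gamma|q|\Big]+\gamma^{-1}L_{\tilde b}\bigl(|v|+\bb W_1(\hat\mu_t^X,\tilde\mu_t^X)\bigr).
\end{equation*}
For $\Theta_2$ I would use that $r_s\le R_1$ and $\ka\wedge|q|\le|q|\le r_s$ in the current region, so that \eqref{e:function2} is applicable. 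Combined with $\ps''=-g'\ps'$, Assumption \textbf{(A0)} (which gives $J(\gamma(\ka\wedge|q|))(\ka\wedge|q|)^2\ge 2|q|\si(|q|)$), and $\nu_{\gamma(q)_\ka}(\R^d)\ge J(\gamma(\ka\wedge|q|))$, this produces
\begin{equation*}
\Theta_2\le -g'(r_s)\ps'(r_s)\,|q|\,\si(|q|).
\end{equation*}

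The next step is a dichotomy according to the ratio of $|v|$ and $|q|$. In the first regime $|v|\le k_0|q|$, one has $r_s=\al|v|+|q|\le(1+k_0\al)|q|$, so $|q|\ge r_s/(1+k_0\al)$; monotonicity of $\si$ then gives $\si(|q|)\ge\si(r_s/(1+k_0\al))$, and substituting into $g'(r_s)=2\al\gamma(1+k_0\al)/\si(r_s/(1+k_0\al))$ yields $\Theta_2\le-2\al\gamma(1+k_0\al)\ps'(r_s)|q|$. Adding the $\Theta_1$ bound, the coefficient of $|q|$ becomes $-\al\gamma(1+2k_0\al)\ps'(r_s)$; then, using $\ps'(r_s)\ge\ps'(R_1)$ and $\ps(r_s)\le r_s=\al|v|+|q|$, checking the inequality reduces to showing
\begin{equation*}
\bigl(\al-\tfrac14\bigr)|v|-2k_0\al|q|\le 0,
\end{equation*}
which follows from $|v|\le k_0|q|$ regardless of the sign of $\al-\tfrac14$. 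This accommodates the choice $c_1\le\al\gamma\ps'(R_1)$, i.e.\ the first entry of the minimum.

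In the complementary regime $|v|>k_0|q|$, one simply discards $\Theta_2$ using the concavity bound \eqref{e:function1}. Substituting $|q|\le|v|/k_0$ into the $\Theta_1$ estimate gives
\begin{equation*}
\widetilde{\scr{L}}_{[\hat\mu,\tilde\mu],t}\ps(r_s)\le -\ps'(r_s)\al\gamma\,\tfrac{k_0-2}{2k_0}|v|+\gamma^{-1}L_{\tilde b}\bigl(|v|+\bb W_1\bigr),
\end{equation*}
and extracting $-\tfrac{\al\gamma}{4}\ps'(R_1)|v|$ from the main term leaves a residual of size $\ps'(R_1)\al\gamma\,\tfrac{k_0-4}{4k_0}|v|$; since $\ps(r_s)\le(\al+1/k_0)|v|=\tfrac{k_0\al+1}{k_0}|v|$ in this regime, the condition $c_1\cdot\tfrac{k_0\al+1}{k_0}\le\ps'(R_1)\al\gamma\tfrac{k_0-4}{4k_0}$ is precisely the second entry of the minimum defining $c_1$.

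I expect the main obstacle to be the careful bookkeeping of constants in the second step of Case~1, where three terms of comparable order ($-\tfrac{\al\gamma}{2}|v|$, $\al\gamma|q|$, $-2\al\gamma(1+k_0\al)|q|$) must be rebalanced against $\ps(r_s)\le\al|v|+|q|$ without any smallness assumption on $\al$; the correct use of $|v|\le k_0|q|$ at the very end is what absorbs the potentially problematic $(\al-1/4)|v|$ contribution. Once this rebalancing is done, the replacement $\ps'(r_s)\ge\ps'(R_1)$ combined with $\ps'(r_s)\le 1$ (used for the $L_{\tilde b}$ term) matches the statement exactly.
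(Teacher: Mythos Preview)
Your proof is correct and follows essentially the same route as the paper: the decomposition from Lemma~\ref{le1}, the Lipschitz/\eqref{e:addremark} bound on $\Theta_1$, the use of \eqref{e:function2} with Assumption~\textbf{(A0)} for $\Theta_2$, and the dichotomy $|v|\le k_0|q|$ versus $|v|>k_0|q|$. The only cosmetic difference is that in Case~1 the paper immediately replaces $|q|$ by $r_s/(1+k_0\al)$ in the $\Theta_2$ estimate to obtain $\Theta_2\le -2\al\gamma\ps'(r_s)r_s$, which combines directly with $\Theta_1\le \al\gamma\ps'(r_s)r_s-\tfrac{\al\gamma}{2}\ps'(r_s)|v|+\ldots$ and avoids your auxiliary check $(\al-\tfrac14)|v|\le 2k_0\al|q|$.
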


\begin{proof}
Here, we will adopt the shorthand notations $\Theta_1$ and $\Theta_2$ introduced in \eqref{h}. Note that
\begin{align*}
\Theta_1
&\le \ps'(r_s)\left(\ga^{-1}|b(x)-b(x')|+\ga^{-1}\Big|\int_{\R^d}\tilde{b}(x,z)\,\hat{\mu}_t^X(dz)-\int_{\R^d}\tilde{b}(x',z')\,\tilde{\mu}_t^X(dz')\Big|-\al\ga|v|+\al\ga|q|\right)\\
&\le \ps'(r_s)\left(\ga^{-1}L_b|v|+\ga^{-1}L_{\tilde{b}} (|v|+\bb{W}_1(\hat{\mu}_t^X,\tilde{\mu}_t^X))-\al\ga|v|+\al\ga|q|\right)\\
&= \ps'(r_s)\left((L_b\ga^{-2}-\al)\ga|v|+\al\ga|q|+\ga^{-1}L_{\tilde{b}} (|v|+\bb{W}_1(\hat{\mu}_t^X,\tilde{\mu}_t^X))\right)\\
&= \ps'(r_s)\left(-\frac{\al\ga}{2}|v|+\al\ga|q|+\ga^{-1}L_{\tilde{b}}  (|v|+\bb{W}_1(\hat{\mu}_t^X,\tilde{\mu}_t^X))\right),
\end{align*}
where in the second inequality we used the Lipschitz continuity of $b$ in Assumption \textbf{(A1)} and \eqref{e:addremark}, and the last equality follows from \eqref{al}.

As mentioned above, since $\De((x,y),(x',y'))\le D_\Ga$, $r_s\le R_1$.
We then split the proof into two cases.

\noindent \textbf{ Case 1: $|v|\le k_0|q|$}. In this case, by $r_s:=\al|v|+|q|$, it holds that
$$|q|\le r_s\le(1+k_0\al)|q|.$$
Thus,
$$
\Theta_1\le \al\ga\ps'(r_s)r_s-\frac{\al\ga}{2}\ps'(r_s)|v|+\ps'(r_s)\ga^{-1}L_{\tilde{b}} (|v|+\bb{W}_1(\hat{\mu}_t^X,\tilde{\mu}_t^X)).
$$
Then, according to \eqref{e:function2} and \eqref{f} in Assumption \textbf{(A0)} (in particular the increasing property of the function $\sigma(r)$), we obtain
\begin{align*}
\widetilde{\scr{L}}_{[\hat{\mu},\tilde{\mu}],t}\,\ps(r_s)
&\le
\frac{1}{2}\ps''(r_s)J(\gamma(\ka\wedge|q|))(\ka\wedge|q|)^2+\al\ga\ps'(r_s)r_s-\frac{\al\ga}{2}\ps'(r_s)|v|+\ps'(r_s)\ga^{-1}L_{\tilde{b}}(|v|+\bb{W}_1(\hat{\mu}_t^X,\tilde{\mu}_t^X))\\
&\le
 \ps''(r_s)\sigma(|q|)|q|+\al\ga\ps'(r_s)r_s-\frac{\al\ga}{2}\ps'(r_s)|v|+\ps'(r_s)\ga^{-1}L_{\tilde{b}}(|v|+\bb{W}_1(\hat{\mu}_t^X,\tilde{\mu}_t^X))\\
&\le
\ps''(r_s)\si\Big(\frac{r_s}{1+k_0\al}\Big)\frac{1}{1+k_0\al}r_s+\al\ga\ps'(r_s)r_s-\frac{\al\ga}{2}\ps'(r_s)|v|+\ps'(r_s)\ga^{-1}L_{\tilde{b}}(|v|+\bb{W}_1(\hat{\mu}_t^X,\tilde{\mu}_t^X))\\
&\le
-\al\ga\ps'(r_s) r_s-\frac{\al\ga}{2}\ps'(r_s)|v|+\ps'(r_s)\ga^{-1}L_{\tilde{b}}(|v|+\bb{W}_1(\hat{\mu}_t^X,\tilde{\mu}_t^X)).
\end{align*}

\noindent \textbf{ Case 2: $|v|> k_0|q|$}. In this case, we have
$$\al|v|\le r_s\le \big(\al+\frac{1}{k_0}\big)|v|.$$
Thus, by \eqref{e:function1}, $\Theta_2\le0$, so
\begin{align*}
\widetilde{\scr{L}}_{[\hat{\mu},\tilde{\mu}],t}\,\ps(r_s)
&\le\ps'(r_s)\left\{-\frac{\al\ga}{4}|v|+\frac{\al\ga}{k_0}|v|-\frac{\al\ga}{4}|v|+\ga^{-1}L_{\tilde{b}} (|v|+\bb{W}_1(\hat{\mu}_t^X,\tilde{\mu}_t^X))\right\}\\
&\le-\frac{k_0-4}{4(k_0\al+1)}\al\ga\ps'(r_s)r_s-\frac{\al\ga}{4}\ps'(r_s)|v|+\ps'(r_s)\ga^{-1}L_{\tilde{b}}(|v|+\bb{W}_1(\hat{\mu}_t^X,\tilde{\mu}_t^X)).
\end{align*}

Combining with two cases above and applying the facts that $r\mapsto \ps'(r)$ is decreasing on $(0,R_1]$ and, for all $0<r\le R_1$, $\ps'(R_1)r\le \ps(r)\le r$, we find
\begin{align*}
\widetilde{\scr{L}}_{[\hat{\mu},\tilde{\mu}],t}\,\ps(r_s)
&\le-c_1\ps(r_s)-\frac{\al\ga}{4}\ps'(R_1)|v|+\ps'(r_s)\ga^{-1}L_{\tilde{b}}(|v|+\bb{W}_1(\hat{\mu}_t^X,\tilde{\mu}_t^X))\\
&\le-c_1\ps(r_s)-\frac{\al\ga}{4}\ps'(R_1)|v|+\ga^{-1}L_{\tilde{b}}(|v|+\bb{W}_1(\hat{\mu}_t^X,\tilde{\mu}_t^X)),
\end{align*}
where in the last inequality we also used $\ps'(r_s)\le1$ for all $r_s\in(0,R_1]$. The proof is completed.
\end{proof}

On the other hand, for any $x$, $x'$, $y$, $y'\in\R^d$ with $\De((x,y),(x',y'))>D_\Ga$ we obtain the following contraction result with the aid of the synchronous coupling. In particular, in the proof of the proposition below the coefficients $A$, $B$ and $C$ in the definition \eqref{e:metric1} of the metric $r_l$ are given explicitly, and the positive constant $\cR$ in \eqref{e:Gamma} is also fixed.

\begin{proposition}\label{Proposition2}
Suppose that Assumptions {\rm\textbf{(A1)}}--{\rm\textbf{(A2)}} and \eqref{hh} hold, then it holds that for any $x$, $x'$, $y$, $y'\in\R^d$ with $\De((x,y),(x',y'))>D_\Ga$,
\begin{equation}\label{ll}
\widetilde{\scr{L}}_{[\hat{\mu},\tilde{\mu}],t}\,r^2_l \le-\tau\ga r^2_l+\ga^{-1}|(1-2\tau)v+2\ga^{-1}w|L_{\tilde{b}}(|v|+\bb{W}_1(\hat{\mu}_t^X,\tilde{\mu}_t^X)),
\end{equation} where
\begin{equation}\label{tau}
\tau:=\min\{{1}/{8}, \ga^{-2}\th-4L^2_b\ga^{-4}/3\}.
\end{equation}
\end{proposition}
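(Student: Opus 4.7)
The plan is to exploit the synchronous structure of the jump coupling when $\Delta>D_\Gamma$. Under the indicator in \eqref{Coupling operator}, the jump part of $\widetilde{\scr{L}}_{[\hat{\mu},\tilde{\mu}],t}$ reduces to the synchronous operator $\widetilde{\scr{L}}^{\ast}_{x,x'}$, in which both marginals jump by the same increment; this leaves $v=x-x'$ and $w=y-y'$ untouched, hence $\widetilde{\scr{L}}^{\ast}_{x,x'}r_l^2=0$. The task therefore reduces to bounding the pure drift $\widetilde{\scr{L}}_{[\hat{\mu},\tilde{\mu}],t,0}r_l^2$.

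Using $\nabla_x r_l^2=2Av+Bw$ and $\nabla_y r_l^2=Bv+2Cw$ (with opposite signs in the primed variables), a direct expansion of \eqref{Ll} yields
\begin{align*}
\widetilde{\scr{L}}_{[\hat{\mu},\tilde{\mu}],t,0}r_l^2 &= (2A-\gamma B)\langle v,w\rangle + (B-2\gamma C)|w|^2 + B\langle b(x)-b(x'),v\rangle + 2C\langle b(x)-b(x'),w\rangle\\
&\quad + \Big\langle \int \tilde{b}(x,z)\hat{\mu}^X_t(dz)-\int \tilde{b}(x',z)\tilde{\mu}^X_t(dz),\, Bv+2Cw\Big\rangle.
\end{align*}
The interaction bracket is controlled by \eqref{e:addremark} as $|Bv+2Cw|L_{\tilde b}(|v|+\bb{W}_1(\hat{\mu}^X_t,\tilde{\mu}^X_t))$. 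Matching the form $\gamma^{-1}|(1-2\tau)v+2\gamma^{-1}w|$ in the target bound forces $B=\gamma^{-1}(1-2\tau)$ and $C=\gamma^{-2}$; then $\gamma B=1-2\tau$, $2\gamma C=2\gamma^{-1}$, and the coefficient $2A-\gamma B$ of the cross term $\langle v,w\rangle$ vanishes under the natural choice $A=(1-2\tau)/2$, which also makes $r_l^2$ positive definite since $4AC-B^2=\gamma^{-2}(1-2\tau)(1+2\tau)>0$ for $\tau<1/2$.

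With $A,B,C$ fixed, it remains to show
\[
-\gamma^{-1}(1+2\tau)|w|^2+\gamma^{-1}(1-2\tau)\langle b(x)-b(x'),v\rangle+2\gamma^{-2}\langle b(x)-b(x'),w\rangle \le -\tau\gamma r_l^2.
\]
Assumption \textbf{(A1)} gives $\langle b(x)-b(x'),v\rangle\le-\theta|v|^2+(\theta+L_b)R_0^2\I_{\{|v|\le R_0\}}$ by combining monotonicity for $|v|>R_0$ with the Lipschitz bound on the complementary set, while Young's inequality $2\gamma^{-2}L_b|v||w|\le\delta L_b^2\gamma^{-2}|v|^2+\delta^{-1}\gamma^{-2}|w|^2$, with $\delta$ chosen so that $\delta^{-1}\gamma^{-1}\le 1+\tau$, absorbs the $|w|^2$ contribution into $-\gamma^{-1}(1+2\tau)|w|^2$ while extracting the required $-\tau\gamma^{-1}|w|^2=-\tau\gamma C|w|^2$. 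The residual $|v|^2$ coefficient then dominates $\tau\gamma A=\tau\gamma(1-2\tau)/2$ precisely because $\tau$ is defined in \eqref{tau} as the minimum of $1/8$ (which preserves the Young margin and positive-definiteness) and $\gamma^{-2}\theta-(4/3)L_b^2\gamma^{-4}$ (which is positive by the spectral-gap condition \eqref{hh}); the ratio $4/3$ emerges from optimizing $\delta$ against the $(1-2\tau)$ factor.

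The main obstacle is the residual constant $(\theta+L_b)R_0^2\I_{\{|v|\le R_0\}}$ from the Lipschitz regime: it cannot be absorbed pointwise into $-\tau\gamma r_l^2$. This is handled by the still-free parameter $\cR$ in \eqref{e:Gamma}: the condition $\Delta>D_\Gamma$ is equivalent to $r_l^2>\cR$, so fixing $\cR$ large enough (specifically so that a small fraction of $\tau\gamma\cR$ exceeds $\gamma^{-1}(1-2\tau)(\theta+L_b)R_0^2$) lets this residual constant be dominated by part of the contraction, with the remaining part (accommodated by the $\min$-cap $1/8$ in \eqref{tau}) producing the clean rate stated in \eqref{ll}.
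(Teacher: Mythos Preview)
Your overall strategy matches the paper: the synchronous jump operator annihilates $r_l^2$, the drift is expanded, Assumption~\textbf{(A1)} is split into the monotone and Lipschitz regimes, Young's inequality controls $2C\langle b(x)-b(x'),w\rangle$, and the residual $(L_b+\theta)R_0^2\I_{\{|v|\le R_0\}}$ is absorbed by fixing $\cR$ large enough. The identification $B=(1-2\tau)\gamma^{-1}$, $C=\gamma^{-2}$ from the form of the interaction term is also correct.

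The gap is in your choice $A=(1-2\tau)/2$. You pick it so that the cross term $(2A-\gamma B)\langle v,w\rangle$ vanishes from the \emph{drift}, and then argue that the resulting diagonal bound $-a|v|^2-c|w|^2$ satisfies $a\ge\tau\gamma A$ and $c\ge\tau\gamma C$. But the target $-\tau\gamma r_l^2$ still contains the cross term $-\tau\gamma B\langle v,w\rangle=-\tau(1-2\tau)\langle v,w\rangle$, which is nonzero. When $\langle v,w\rangle>0$ this makes $-\tau\gamma r_l^2$ strictly smaller than $-\tau\gamma(A|v|^2+C|w|^2)$, so diagonal dominance alone is insufficient; you would additionally need the positive-semidefiniteness condition $4(a-\tau\gamma A)(c-\tau\gamma C)\ge(\tau\gamma B)^2$, which you have not checked and which is not obviously compatible with the stated value of $\tau$.

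The paper avoids this by a different choice: it takes $A=\tfrac12(1-2\tau)^2$ (not $(1-2\tau)/2$) and keeps the cross term in the drift, selecting $\lambda=2\tau\gamma$ so that the cross-term coefficients on both sides match \emph{exactly}, i.e.\ $\gamma B-2A=\lambda B$ and $\gamma C-B=\lambda C$. This reduces the comparison to the single scalar inequality $\theta B-L_b^2\gamma^{-1}C\ge\lambda A$, for which the definition \eqref{tau} is tailored (the factor $4/3$ comes from $(1-2\tau)^{-1}\le 4/3$ when $\tau\le 1/8$). The quadratic part then yields $-2\tau\gamma r_l^2$, and half of this rate is sacrificed to absorb the Lipschitz residual via $\cR=\tau^{-1}\gamma^{-2}(1-2\tau)(L_b+\theta)R_0^2$, leaving the clean $-\tau\gamma r_l^2$. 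Note also that the paper's value of $A$ is the one recorded in \eqref{rl} and reused downstream (Lemma~\ref{lemma3.4}, the proof of Theorem~\ref{Main theorem}), so a different choice here would propagate.
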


\begin{proof}
Recalling that $r^2_l:=A|v|^2+B\langle v,w\rangle+C|w|^2$, it holds that
$$
\nabla_yr^2_l=Bv+2Cw, \quad \nabla_{y'}r^2_l=-(Bv+2Cw),
$$
$$
\nabla_xr^2_l=2Av+Bw, \quad \nabla_{x'}r^2_l=-(2Av+Bw),
$$
so $\widetilde{\scr{L}}^\ast_{x,x'}\,r^2_l=0.$ By \eqref{Coupling operator},
\begin{align*}
\widetilde{\scr{L}}_{[\hat{\mu},\tilde{\mu}],t}\,r^2_l&=\widetilde{\scr{L}}_{[\hat{\mu},\tilde{\mu}],t,0}r^2_l\\
&=\bigg\langle Bv+2Cw,-\ga w+b(x)-b(x')+\int_{\R^d}\tilde{b}(x,z)\,\hat{\mu}_t^X(dz)-\int_{\R^d}\tilde{b}(x',z')\,\tilde{\mu}_t^X(dz')\bigg\rangle+\langle2Av+Bw,w\rangle\\
&=B\langle v,-\ga w\rangle+2A\langle v,w\rangle+2C\langle w,-\ga w\rangle+B\langle w,w\rangle+B\langle v,b(x)-b(x')\rangle+2C\langle w,b(x)-b(x')\rangle\\
&\quad+\bigg\langle Bv+2Cw,\int_{\R^d}\tilde{b}(x,z)\,\hat{\mu}_t^X(dz)-\int_{\R^d}\tilde{b}(x',z')\,\tilde{\mu}_t^X(dz')\bigg\rangle\\
&\le-\big[(\th B-L^2_b\ga^{-1}C)|v|^2+(\ga B-2A)\langle v,w\rangle+(\ga C-B)|w|^2\big]\\
&\quad+(BL_b+B\th )|v|^2\I_{\{|v|\le R_0\}}+|Bv+2Cw|L_{\tilde {b}}(|v|+\bb{W}_1(\hat{\mu}_t^X,\tilde{\mu}_t^X)).
\end{align*}
Here in the inequality above, we used the following facts that,  by Assumption \textbf{(A1)},
$$
\langle v,b(x)-b(x')\rangle\le-\th |v|^2\I_{\{|v|> R_0\}}+L_b|v|^2\I_{\{|v|\le R_0\}}=-\th |v|^2+(L_b+\th )|v|^2\I_{\{|v|\le R_0\}}
$$
and, by $2ab\le a^2+b^2$,
$$2\langle w,b(x)-b(x')\rangle\le2 L_b|w||v|\le L^2_b\ga^{-1} |v|^2+\ga  |w|^2.$$

Next, we aim to find positive constants $A$, $B$ and $C$ so that
\begin{equation}\label{la}
-\big[(\th B-L^2_b\ga^{-1}C)|v|^2+(\ga B-2A)\langle v,w\rangle+(\ga C-B)|w|^2\big]\le-\la\big[A|v|^2+B\langle v,w\rangle+C|w|^2\big]
\end{equation}
holds for some $\lambda>0$.
For this, it is sufficient to require that
\begin{equation}\left\{\begin{array}{l}\label{i}
\th B-L^2_b\ga^{-1}C\ge \la A\\
\ga B-2A=\la B\\
\ga C-B=\la C.\end{array}\right.\\
\end{equation}
Below, we take $\la:=2\tau\ga$ and $\tau:=\min\{{1}/{8}, \ga^{-2}\th-4L^2_b\ga^{-4}/3\}>0$. Set $A=\frac{1}{2}(1-2\tau)^2$, $B=(1-2\tau)\ga^{-1}$ and $C=\ga^{-2}$ in \eqref{i}; that is,
\begin{equation}\label{rl}
 r^2_l=\frac{1}{2}(1-2\tau)^2|v|^2+(1-2\tau)\ga^{-1}\langle v,w\rangle+\ga^{-2}|w|^2.
\end{equation}
Indeed,
with the choice of $A,B$ and $C$ above, it is easy to see that both equalities in \eqref{i} are satisfied. Furthermore,  by the definition of $\tau:=\min\{{1}/{8}, \ga^{-2}\th-4L^2_b\ga^{-4}/3\}>0$
(thanks to \eqref{hh}), $$\theta \gamma^{-2}\ge 4L_b^2\gamma^{-4}/3+\tau(1-2\tau)\ge  L_b^2\gamma^{-4}(1-2\tau)^{-1} +\tau(1-2\tau)$$ and so $\theta\gamma^{-2}(1-2\tau)\ge L_b^2\gamma^{-4}+\tau(1-2\tau)^2,$ which implies that the first inequality in \eqref{i} holds.
Thus,
\begin{equation}\label{cc}\begin{split}
\widetilde{\scr{L}}_{[\hat{\mu},\tilde{\mu}],t}\,r^2_l&\le
-2\tau\ga r^2_l+(L_b+\th)(1-2\tau)\ga^{-1}|v|^2\I_{\{|v|\le R_0\}}\\
&\quad
+\ga^{-1}|(1-2\tau)v+2\ga^{-1}w|L_{\tilde{b}}(|v|+\bb{W}_1(\hat{\mu}_t^X,\tilde{\mu}_t^X)).\end{split}
\end{equation}

Since $\De((x,y),(x',y'))> D_\Ga$, it holds that $r^2_l>\cR$. Choosing
\begin{equation}\label{l}
\cR:=\tau^{-1}\ga^{-2}(1-2\tau)(L_b+\th)R_0^2,
\end{equation}
we can see
$$
-\tau\ga r^2_l+(L_b+\th)(1-2\tau)\ga^{-1}|v|^2\I_{\{|v|\le R_0\}}\le-\tau\ga\cR+(L_b+\th)(1-2\tau)\ga^{-1}R_0^2=0.
$$
This along with \eqref{cc} yields the desired assertion.
\end{proof}

To prove Theorem \ref{Main theorem}, we will put two contraction results in Propositions \ref{Proposition1} and \ref{Proposition2} together. For simplicity, for any $t>0$, we write $r_s(t)=r_s((X^{\mu}_t,Y^{\mu}_t),(X^{\bar \mu}_t,Y^{\bar\mu}_t))$, $r_l(t)=r_l((X^{\mu}_t,Y^{\mu}_t),(X^{\bar\mu}_t,Y^{\bar\mu}_t))$, $\Delta(t)=\Delta((X^{\mu}_t,Y^{\mu}_t),(X^{\bar\mu}_t,Y^{\bar\mu}_t))$ and $\rho(t)=\psi((\De(t)\wedge D_\Ga)+\ep r_l(t))$.

\begin{proof}[Proof of Theorem $\ref{Main theorem}$]
Firstly, we shall verify \eqref{Wde}, and for this we distinguish two cases.\\
\noindent \textbf{ Case 1: $\De(t)\le D_\Ga$}. In this case, $r_s(t)\le R_1$ and $\rho(t)=\ps(r_s(t))$. By \eqref{Lc}, we obtain
\begin{equation}\label{v}\begin{split}
d\rho(t)&\le (\widetilde{\scr{L}}_{[\mu,\bar\mu],t}\,\ps)(r_s(t))\,dt+dM_t\\
&\le-c_1\ps(r_s(t))\,dt-\frac{\al\ga}{4}\ps'(R_1)|V_t|\,dt+\ga^{-1}L_{\tilde{b}}\big(|V_t|+\bb{E}|V_t|\big)\,dt+dM_t,\\
&\le-c_1\rho(t)\,dt-\frac{\al\ga}{8}\ps'(R_1)|V_t|\,dt+\ga^{-1}L_{\tilde{b}}\bb{E}|V_t|\,dt+dM_t\\
&=-c_1\rho(t)\,dt-\frac{L_b\ga^{-1}}{4}\ps'(R_1)|V_t|\,dt+\ga^{-1}L_{\tilde{b}}\bb{E}|V_t|\,dt+dM_t,
\end{split}\end{equation}
where $(M_t)_{t\ge0}$ is a martingale.~Here, the first inequality above follows from the definition of $\rho(t)$ (in particular, $\rho(t)\le \ps(r_s(t))$ for all $t>0$, so
$ \widetilde{\scr{L}}_{[\mu,\bar\mu],t}\,\rho(t)\le (\widetilde{\scr{L}}_{[\mu,\bar\mu],t}\,\ps)(r_s(t))$ when $\De(t)\le D_\Ga$$)$, in the second inequality we used the fact that
$\bb{W}_1(\hat{\mu}_t^X,\tilde{\mu}_t^X)\le \bb{E}|V_t|$, and in the last inequality we used \eqref{n}.

\noindent \textbf{ Case 2: $\De(t)>D_\Ga$}. In this case, $\rho(t)=\ps(D_\Ga+\ep r_l(t))$. According to the definition of the coupling operator $\widetilde{\scr{L}}_{[\mu,\bar\mu],t}$ and \eqref{ll},
\begin{align*}
dr_l(t)&=\frac{1}{2r_l(t)}\,dr_l(t)^2=\frac{1}{2r_l(t)}\left(\widetilde{\scr{L}}_{[\mu,\bar\mu],t}\,r_l(t)^2\,dt+dM_t\right)\\
&\le\frac{1}{2r_l(t)}\left(-\tau\ga r_l(t)^2\,dt+\ga^{-1}|(1-2\tau)V_t+2\ga^{-1}W_t|L_{\tilde b}(|V_t|+\bb{E}|V_t|)\,dt+dM_t\right)\\
&=-c_2r_l(t)\,dt+\frac{|(1-2\tau)V_t+2\ga^{-1}W_t|}{2\ga r_l(t)}L_{\tilde b}(|V_t|+\bb{E}|V_t|)\,dt+\frac{1}{2r_l(t)}dM_t
\end{align*}
with $c_2:={\tau\ga}/{2}$. Furthermore, by It\^{o}'s formula, $\ps'\ge0$ and $\ps''\le0$,  we have
\begin{equation}\label{o}\begin{split}
d\rho(t)&=d\ps(D_\Ga+\ep r_l(t))\le\ep\ps'(D_\Ga+\ep r_l(t))\,dr_l(t)\\
&\le\ep\ps'(D_\Ga+\ep r_l(t))\left(-c_2r_l(t)+\frac{|(1-2\tau)V_t+2\ga^{-1}W_t|}{2\ga r_l(t)}L_{\tilde b}(|V_t|+\bb{E}|V_t|)\right)\,dt+d\widetilde{M}_t,
\end{split}\end{equation}
where
\begin{equation}\label{t}
\widetilde{M}_t=\int^t_0\frac{1}{2r_l(s)}\ep\ps'(D_\Ga+\ep r_l(s))\,dM_s.
\end{equation}

For the first term of \eqref{o}, we split it into two parts
\begin{equation}\label{p}
-\frac{\ep\ps'(D_\Ga+\ep r_l(t))}{2}c_2r_l(t)\le-\ps'(2R_1)\frac{c_2\ep r_l(t)}{2(D_\Ga+\ep r_l(t))}\ps(D_\Ga+\ep r_l(t))\le-\ps'(2R_1)\frac{c_2\ep\vare}{2}\rho(t)
\end{equation}
and
\begin{equation}\label{q}
-\frac{\ep\ps'(D_\Ga+\ep r_l(t))}{2}c_2r_l(t)\le-\ps'(2R_1)\frac{c_2\ep}{2}r_l(t),
\end{equation}
where we used the facts that $\psi(r)\le r$ and $\psi'(r)\ge \psi'(2R_1)$ for all $r>0$, and noted that, due to $\De(t)>D_\Ga$,
$$
\frac{r_l(t)}{D_\Ga+\ep r_l(t)}\ge\frac{r_l(t)}{r_s(t)}\ge\vare.
$$
Hence, by \eqref{p} and \eqref{q}, we know that for the first term of \eqref{o}
\begin{equation}\label{r}
-\ep\ps'(D_\Ga+\ep r_l(t))c_2r_l(t)\le-\ps'(2R_1)\frac{c_2\ep\vare}{2}\rho(t)-\ps'(2R_1)\frac{c_2\ep}{2}r_l(t).
\end{equation}

On the other hand, for the second term of \eqref{o} we note
\begin{equation}\label{s}
\begin{split}
\ep\ps'(D_\Ga+\ep r_l(t))\frac{|(1-2\tau)V_t+2\ga^{-1}W_t|}{2\ga r_l(t)}\le&\frac{\ep}{2\ga}\sqrt{\frac{(1-2\tau)^2|V_t|^2+4(1-2\tau)\ga^{-1}\langle V_t,W_t\rangle+4\ga^{-2}|W_t|^2}{\frac{1}{2}(1-2\tau)^2|V_t|^2+(1-2\tau)\ga^{-1}\langle V_t,W_t\rangle+\ga^{-2}|W_t|^2
}}\\
\le&\frac{\ep}{\ga},\end{split}
\end{equation} where in the last inequality we used the fact that $\ps'\le 1$.
Combining \eqref{r} with \eqref{s} yields
\begin{equation}\label{u}\begin{split}
d\rho(t)
&\le-\ps'(2R_1)\frac{c_2\ep\vare}{2}\rho(t)\,dt-\ps'(2R_1)\frac{c_2\ep}{2}r_l(t)\,dt+\ep\ga^{-1}L_{\tilde b}(|V_t|+\bb{E}|V_t|)\,dt+d\widetilde{M}_t\\
&\le-\ps'(2R_1)\frac{c_2\ep\vare}{2}\rho(t)\,dt-\ps'(2R_1)\frac{c_2\ep}{4\sqrt{2}} (1-2\tau)|V_t|\,dt+\frac{1}{2}\ga^{-1}L_{\tilde b}(|V_t|+\bb{E}|V_t|)\,dt+d\widetilde{M}_t,
\end{split}
\end{equation}
where in the last inequality we used $r_l(t)\ge({1-2\tau})|V_t|/(2{\sqrt{2}})$ (see the third inequality in \eqref{e:add1} below with $\eta^2=3/4$) and $\ep\le{1}/{2}$, and $(\widetilde{M}_t)_{t\ge0}$ is given by \eqref{t}.

Combining \eqref{v} with \eqref{u} and taking the expectation yield
\begin{align*}
\frac{d}{dt}\bb{E}[\rho(t)]
&\le-\min\bigg\{c_1,\frac{c_2\ep\vare}{2}\ps'(2R_1)\bigg\}\bb{E}[\rho(t)]-\min\left\{\frac{L_b\ga^{-1}}{4}\ps'(R_1),\frac{1}{8\sqrt{2}}\ep\ga\tau(1-2\tau)\ps'(2R_1)\right\}\bb{E}|V_t|\\
&\quad+\ga^{-1}L_{\tilde b}\bb{E}|V_t| \\
&\le-\min\big\{c_1,\frac{c_2\ep\vare}{2}\ps'(2R_1)\big\}\bb{E}[\rho(t)],
\end{align*}
where in the last inequality we used \eqref{n}. By applying Gr\"{o}nwall's inequality, we get that for any $t>0$ and
$\mu_0,\bar{\mu}_0 \in \mathscr{P}_1(\R^{2d})$,
$$
\bb{W}_{\rho}({\mu}_0P^{{\mu}}_t,\bar{\mu}_0P^{\bar{\mu}}_t)\le\bb{E}[\rho(t)] \le e^{-\la t}\bb{E}[\rho(0)]
$$
with
\begin{equation}\label{w}
\la=\min\Big\{\al\ga\ps'(R_1),\al\ga\frac{k_0-4}{4(k_0\al+1)}\ps'(R_1),\frac{\tau\gamma\ep\vare}{4}\ps'(2R_1)\Big\}.
\end{equation}

Taking the infimum over all couplings $\Pi$ of
${\mu}_0$ and $\bar{\mu}_0$, we obtain \eqref{Wde}, i.e.,
 $$\bb{W}_{\rho}({\mu}_0P^{{\mu}}_t,\bar{\mu}_0P^{\bar{\mu}}_t)\le e^{-\la t}\bb{W}_{\rho}({\mu}_0,\bar{\mu}_0).$$

Next, we prove the desired assertion.
Recalling that
${\mu}_0P^{{\mu}}_t=\mu_{t}$ and $\bar{\mu}_0P^{\bar{\mu}}_t=\bar\mu_{t}$ when ${\mu}_0=\mu$ and $\bar{\mu}_0=\bar\mu$ respectively, we have
$$
 \bb{W}_{\rho}(\mu_t,\bar\mu_t)\le e^{-\la t}\bb{W}_{\rho}(\mu,\bar\mu).
$$
Therefore, we obtain, by Lemma \ref{lemma3.4} below, for all $t>0$,
$$
 \bb{W}_1(\mu_t,\bar\mu_t)\le \frac{1}{ M_1}\bb{W}_{\rho}(\mu_t,\bar\mu_t)\le \frac{1}{ M_1}e^{-\la t}\bb{W}_{\rho}(\mu,\bar\mu)\le \frac{M_2}{M_1}e^{-\la t}\bb{W}_1(\mu,\bar\mu),
$$
where $M_1$ and $M_2$ are given in Lemma \ref{lemma3.4}.
Hence, the desired assertion follows with $C_1=M_2/M_1$ and $\lambda$ given by \eqref{w}.
\end{proof}

\begin{lemma}\label{lemma3.4}
The metric $\rho$ given by \eqref{Metric} is equivalent to the Euclidean distance on $\R^{2d}$. More explicitly,  for any $x,y,x',y'\in \R^d$,
\begin{equation}\label{equivalence}
M_1|(x,y)-(x',y')|\le\rho((x,y),(x',y'))\le M_2|(x,y)-(x',y')|,
\end{equation}
where $M_1=\ep \ps'(2R_1)\min\Big\{\frac{1-2\tau}{2\sqrt{2}},\frac{\ga^{-1}}{\sqrt{3}}\Big\}$, and $M_2=\sqrt{2}\max\{\al+1,\ga^{-1}\}$.
\end{lemma}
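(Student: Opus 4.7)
The plan is to establish the two inequalities separately, exploiting three structural facts recorded earlier: (i) $\psi'(2R_1)\,r\le\psi(r)\le r$ for all $r\ge 0$ (see \eqref{inequality}); (ii) $\psi$ is nondecreasing on $[0,\infty)$; and (iii) by the choice of $\epsilon\in(0,1/2]$ from Section~\ref{sub coupling operator}, one has $2\epsilon r_l\le r_s$, so in particular $\Delta\ge r_s/2\ge 0$ and $\Delta\wedge D_\Gamma\ge 0$.

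For the upper bound, I would argue that in both cases $\Delta\le D_\Gamma$ and $\Delta>D_\Gamma$ the argument of $\psi$ inside $\rho$ satisfies
$$(\Delta\wedge D_\Gamma)+\epsilon r_l\le\Delta+\epsilon r_l=r_s,$$
so using $\psi(r)\le r$ and the triangle inequality $|q|\le|v|+\gamma^{-1}|w|$,
$$\rho((x,y),(x',y'))\le r_s=\alpha|v|+|q|\le(\alpha+1)|v|+\gamma^{-1}|w|\le\max\{\alpha+1,\gamma^{-1}\}(|v|+|w|).$$
Bounding $|v|+|w|\le\sqrt{2}\,\sqrt{|v|^2+|w|^2}=\sqrt{2}\,|(x,y)-(x',y')|$ yields the claim with $M_2=\sqrt{2}\max\{\alpha+1,\gamma^{-1}\}$. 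This is entirely routine.

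The lower bound is the more delicate step. Since $(\Delta\wedge D_\Gamma)+\epsilon r_l\ge\epsilon r_l$ and $\psi$ is nondecreasing, combining with (i) gives
$$\rho((x,y),(x',y'))\ge\psi(\epsilon r_l)\ge\epsilon\,\psi'(2R_1)\,r_l,$$
so everything reduces to controlling $r_l$ from below by $\sqrt{|v|^2+|w|^2}$. Here I would plug in the explicit form \eqref{rl} and absorb the cross term via Young's inequality
$$(1-2\tau)\gamma^{-1}|v|\,|w|\le\frac{\eta(1-2\tau)^2}{2}|v|^2+\frac{\gamma^{-2}}{2\eta}|w|^2$$
for a parameter $\eta\in[1/2,1)$ (the lower bound $\eta\ge 1/2$ is needed to keep a positive $|w|^2$ coefficient, and $\eta<1$ keeps a positive $|v|^2$ coefficient). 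Following the hint in the proof of Proposition~\ref{Proposition2}, taking $\eta=3/4$ gives
$$r_l^2\ge\frac{(1-2\tau)^2}{8}|v|^2+\frac{\gamma^{-2}}{3}|w|^2\ge\min\Big\{\tfrac{(1-2\tau)^2}{8},\tfrac{\gamma^{-2}}{3}\Big\}\bigl(|v|^2+|w|^2\bigr),$$
so that $r_l\ge\min\{(1-2\tau)/(2\sqrt{2}),\gamma^{-1}/\sqrt{3}\}\,|(x,y)-(x',y')|$. Inserting this into the bound $\rho\ge\epsilon\,\psi'(2R_1)\,r_l$ produces exactly the constant $M_1$ in the statement.

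The only part requiring genuine thought is the choice of $\eta$ in Young's inequality, and this choice is dictated by the desired form of $M_1$; all other manipulations are immediate consequences of $\psi'(2R_1)\le\psi'(r)\le 1$ and the elementary norm inequalities. Consequently, I expect the full write-up to be short and self-contained.
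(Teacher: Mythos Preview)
Your proposal is correct and follows essentially the same route as the paper: both bound the argument of $\psi$ above by $r_s$ and below by $\epsilon r_l$, then use \eqref{inequality} and a one-parameter absorption of the cross term in $r_l^2$ (you phrase it as Young's inequality with $\eta=3/4$, the paper as completing the square with $\eta^2=3/4$), arriving at the identical coefficients $\tfrac{(1-2\tau)^2}{8}$ and $\tfrac{\gamma^{-2}}{3}$.
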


\begin{proof}
Recall that $r_s((x,y),(x',y')):=\al|x-x'|+|x-x'+\ga^{-1}(y-y')|$, and $r_l((x,y),(x',y'))$ is given by \eqref{rl}. On the one hand, for all $(x,y),\,(x',y')\in\R^{2d}$,
\begin{align*}
\De((x,y),(x',y'))\wedge D_\Ga+\ep r_l((x,y),(x',y'))
&=r_s((x,y),(x',y'))\wedge (D_\Ga+\ep r_l((x,y),(x',y')))\\
&\le r_s((x,y),(x',y'))\\
&\le\max\{\al+1,\ga^{-1}\}(|x-x'|+|y-y'|)\\
&\le\sqrt{2}\max\{\al+1,\ga^{-1}\}(|x-x'|^2+|y-y'|^2)^{{1}/{2}},
\end{align*}
where we used $(a+b)^2\le2(a^2+b^2)$ in the last inequality.

On the other hand, for all $(x,y),\,(x',y')\in\R^{2d}$,
\begin{equation}\label{e:add1}\begin{split}
&\De((x,y),(x',y'))\wedge D_\Ga+\ep r_l((x,y),(x',y'))\\
&\ge\ep r_l((x,y),(x',y')) =\ep  \left( \frac{1}{2}(1-2\tau)^2|v|^2+(1-2\tau)\ga^{-1}\langle v,w\rangle+\ga^{-2}|w|^2\right)^{1/2}\\
&=\ep  \bigg( \frac{1-\eta^2}{2}(1-2\tau)^2|v|^2+\frac{\eta^2}{2}(1-2\tau)^2|v|^2+\eta(1-2\tau)\eta^{-1}\ga^{-1}\langle v,w\rangle\\
&\qquad +\frac{1}{2}\eta^{-2}\ga^{-2}|w|^2+\Big(1-\frac{\eta^{-2}}{2}\Big)\ga^{-2}|w|^2\bigg)^{1/2}\\
&\ge\frac{\ep}{\sqrt{2}}\Big[(1-\eta^2)(1-2\tau)^2|x-x'|^2+(2-\eta^{-2})\ga^{-2}|y-y'|^2\Big]^{1/2}\\
&\ge\ep\min\Big\{\frac{(1-\eta^2)^{1/2}(1-2\tau)}{\sqrt{2}},\frac{\ga^{-1}(2-\eta^{-2})^{1/2}}{\sqrt{2}}\Big\}(|x-x'|^2+|y-y'|^2)^{1/2},
\end{split}\end{equation} where $\eta\in (\sqrt{2}/2,1).$ In particular, taking $\eta^2=3/4$ we arrive at that
$$\De((x,y),(x',y'))\wedge D_\Ga+\ep r_l((x,y),(x',y'))\ge\ep\min\Big\{\frac{1-2\tau}{2\sqrt{2}},\frac{\ga^{-1}}{\sqrt{3}}\Big\}(|x-x'|^2+|y-y'|^2)^{1/2}.$$

Combining both conclusions above with  \eqref{inequality} yields the desired assertion.
\end{proof}

\section{Proof of Theorem \ref{chaos}}\label{section4}
This section is devoted to the proof of Theorem \ref{chaos}, which shows the propagation of chaos (uniformly in time) of the mean-field interacting particle system \eqref{meanfield} towards the McKean-Vlasov type Langevin dynamic \eqref{Equation} with L\'evy noises.

Fix $N\in\NN$. We define the metric $\rho_N$: $\RR^{2Nd}\times\RR^{2Nd}\to[0,\infty)$ by
\begin{equation}\label{NMetric}
\rho_N((x,y),(x',y')):=N^{-1}\sum_{i=1}^N\rho((x^i,y^i),(x'^i,y'^i)),\quad ((x,y),(x',y'))\in\RR^{2Nd}\times\RR^{2Nd},
\end{equation}
where $x=(x^1,x^2,\cdots, x^N)\in \RR^{Nd}$, and $\rho$ is given by \eqref{Metric}. By \eqref{equivalence}, $\rho_N$ is equivalent to $l^1_N$ given by \eqref{lN} so that for all $((x,y),(x',y'))\in\RR^{2Nd}\times\RR^{2Nd}$,
\begin{equation}\label{equivalenceN}
M_1\,l^1_N((x,y),(x',y'))\le\rho_N((x,y),(x',y'))\le M_2\,l^1_N((x,y),(x',y')).
\end{equation} where $M_1$ and $M_2$ are given  in Lemma \ref{lemma3.4}.

In the following, let $\{(X_t^i,Y_t^i)_{t\ge0}\}_{1\le i\le N}$ be $N$-independent versions of the solution to \eqref{Equation}. Then, for each $1\le i\le N$,  $(X_t^i,Y_t^i)_{t\ge0}$ shares the same distribution  as $(\mu_t)_{t\ge0}$ for the solution to \eqref{Equation}. Furthermore, we consider the corresponding decoupled SDE
\begin{equation}\left\{\begin{array}{l}\label{decoupledN}
dX^{i,\mu}_t=Y^{i,\mu}_tdt,\\
dY^{i,\mu}_t=\Big(b(X^{i,\mu}_t)+\displaystyle\int_{\R^d}\tilde{b}(X^{i,\mu}_t,z)\,\mu_t^{X}(dz)-\ga Y^{i,\mu}_t\Big)\,dt+dL_t^i,\quad i=1,2,\cdots,N,\end{array}\right.
\end{equation}
where for every $t>0$, $\mu_t^{X}={\rm Law}(X_t^i)$ (that is same for all $1\le i\le N$).
Due to the strong well-posedness (so the weak well-posedness) of \eqref{Equation}, we obtain that for
$\mu={\rm Law}((X^i_0,Y^i_0))$ and $\mu_0={\rm Law}((X^{i,\mu}_0,Y^{i,\mu}_0))$,
\begin{equation}\label{weak}
 \mu_0 P_t^{\mu}=\mu_t \quad {\rm if}~~\mu_0=\mu,
\end{equation} where, for every $t>0$, $\mu_0P_t^{\mu}$ is the law of $(X^{i,\mu}_t, Y^{i,\mu}_t)$ corresponding to the $i$-th component of the SDE \eqref{decoupledN}.
Therefore, in order to obtain Theorem $\ref{chaos}$, thanks to Remark \ref{Remark2} (i), it suffices to quantify the propagation of chaos of the particle system \eqref{meanfield} towards the decoupled SDE \eqref{decoupledN} in the following sense
\begin{equation}\label{WNde}
\bb W_{\rho_N}((\mu_0 P_t^{\mu})^{\otimes N},\bar\mu^N_t)\le e^{-\lambda t}\bb W_{\rho_N}({\mu}_0^{\otimes N},\bar\mu^{\otimes N})+C_0 N^{-1/2},\quad t>0,
\end{equation}
where
$(\mu_0 P_t^{\mu})^{\otimes N}$ is the law of the solution to \eqref{decoupledN} with initial distribution
 $\mu_0^{\otimes N}$,
  $\bar\mu^N_t$ is the law of particles driven by  \eqref{meanfield} with initial distribution $\bar\mu^{\otimes N}$, and $\bb W_{\rho_N}$ is the $L^1$-Wasserstein distance with respect to the metric $\rho_N$ given by \eqref{NMetric}.

\subsection{Coupling process for the particle system \eqref{meanfield} and the decoupled SDE \eqref{decoupledN}}
Fix $N\in\NN$. Note that under Assumptions {\rm \textbf{(A1)}} and {\rm\textbf{(A2)}}, the SDE \eqref{decoupledN} has a unique strong solution $\{(X^{i,\mu}_t,Y^{i,\mu}_t)_{t\ge0}\}_{1\le i\le N}$ for any initial condition. We will construct a Markov coupling process
of $\{(X^{i,\mu}_t,Y^{i,\mu}_t)_{t\ge0}\}_{1\le i\le N}$  and $\{(\bar{X}^{i,N}_t,\bar{Y}^{i,N}_t)_{t\ge0})\}_{1\le i\le N}$ solving \eqref{decoupledN} and  \eqref{meanfield} respectively. For this, we turn to an associated coupling operator that is motivated by that in Subsection \ref{sub coupling operator}.

For any $x,y\in \RR^{Nd}$ and $\mu\in\scr{P}_1(\R^{2d})$, the generator of the process $\{(X^{i,\mu}_t,Y^{i,\mu}_t)_{t\ge0}\}_{1\le i\le N}$, acting on $f\in C^2_b(\R^{2Nd})$, is given by $$(\scr{L}_{[{\mu}],t}f)(x,y) =\sum_{i=1}^N(\scr{L}^i_{[{\mu}],t}f^i)(x^i,y^i),$$ where $f^i(x^i,y^i)=f((x^1,\cdots, x^i,\cdots,x^N),(y^1,\cdots,
y^i,\cdots,y^N))$ and
\begin{equation}\label{Li}
\begin{split}
(\scr{L}^i_{[{\mu}],t}f^i)(x^i,y^i)
&=\big\langle y^i, \nabla_{x^i}f^i(x^i,y^i)\big\rangle+\bigg\langle b(x^i)+\displaystyle\int_{\R^d}\tilde{b}(x^i,z)\, {\mu}^X_t(dz)-\ga y^i, \nabla_{y^i}f^i(x^i,y^i)\bigg\rangle\\
&\quad+\int_{\R^d}\big(f^i(x^i,y^i+z)-f^i(x^i,y^i)-\langle\nabla_{y^i}f^i(x^i,y^i),z\rangle\I_{\{|z|\le1\}}\big)\,\nu(dz);
\end{split}
\end{equation}
while the generator of
$\{(\bar{X}^{i,N}_t,\bar{Y}^{i,N}_t)_{t\ge0}\}_{1\le i\le N}$ is given by
$$(\scr{L}_{N,t}f)(x,y) =\sum_{i=1}^N(\scr{L}^i_{N,t}f^i)(x^i,y^i),$$ where
\begin{equation}\label{LN}
\begin{split}
(\scr{L}^i_{N,t}f^i)(x^i,y^i)
&=\big\langle y^i, \nabla_{x^i}f^i(x^i,y^i)\big\rangle+\bigg\langle b(x^i)+N^{-1}\sum_{j=1}^N\tilde{b}(x^i,x^j)-\ga y^i, \nabla_{y^i}f^i(x^i,y^i)\bigg\rangle\\
&\quad+\int_{\R^d}\big(f^i(x^i,y^i+z)-f^i(x^i,y^i)-\langle\nabla_{y^i}f^i(x^i,y^i),z\rangle\I_{\{|z|\le1\}}\big)\,\nu(dz).
\end{split}
\end{equation}

Hence, for any fixed $\mu\in\scr P_1(\RR^{2d})$, $1\le i\le N$, $x^i,y^i,x'^i,y'^i\in\RR^d$ and $f\in C_b^2(\RR^{4d})$, define
\begin{equation}\label{iCoupling operator}
\begin{split}
&\big(\widetilde{\scr{L}}_{[\mu,N],t}^i f\big)\big((x^i,y^i),(x'^i,y'^i)\big)\\
&=\big(\widetilde{\scr{L}}^i_{[\mu,N],t,0}f\big)\big((x^i,y^i),(x'^i,y'^i)\big)+\big(\widetilde{\scr{L}}_{x^i,x'^i}f\big)\big((x^i,\cdot),(x'^i,\cdot)\big)(y^i,y'^i)\I_{\{\De((x^i,y^i),(x'^i,y'^i))\le D_\Ga\}} \\
&\quad+\big(\widetilde{\scr{L}}^\ast_{x^i,x'^i}f\big)\big((x^i,\cdot),(x'^i,\cdot)\big)(y^i,y'^i)\I_{\{\De((x^i,y^i),(x'^i,y'^i))>D_\Ga\}},
\end{split}\end{equation}
where $\widetilde{\scr{L}}_{x^i,x'^i}$ and $\widetilde{\scr{L}}^*_{x^i,x'^i}$ are given by \eqref{Lxx'} and \eqref{Lxx''} respectively, and
\begin{align*}
&\big(\widetilde{\scr{L}}^i_{[\mu,N],t,0}f\big)\big((x^i,y^i),(x'^i,y'^i)\big)\nonumber\\
&:=\bigg\langle b(x^i)+\int_{\R^d}\widetilde{b}(x^i,z)\,{\mu}_t^X(dz)-\ga y^i, \nabla_{y^i}f\big((x^i,y^i),(x'^i,y'^i)\big)\bigg\rangle+\big\langle y^i, \nabla_{x^i}f\big((x^i,y^i),(x'^i,y'^i)\big)\big\rangle\\
&\quad\,\,\,+\bigg\langle b(x'^i)+N^{-1}\sum_{j=1}^N\tilde{b}(x'^i,x'^j)-\ga y'^i, \nabla_{y'^i}f\big((x^i,y^i),(x'^i,y'^i)\big)\bigg\rangle+\big\langle y'^i, \nabla_{x'^i}f\big((x^i,y^i),(x'^i,y'^i)\big)\big\rangle.\nonumber
\end{align*}
According to Proposition \ref{b}, we can verify that, for any $1\le i \le N$, $\widetilde{\scr{L}}_{[\mu,N],t}^i$ given by \eqref{iCoupling operator} is a coupling operator of $\scr{L}_{[\mu],t}^i$ and $\scr{L}_{N,t}^i$ given by \eqref{Li} and \eqref{LN} respectively. Hence, $\sum_{i=1}^N\widetilde{\scr{L}}_{[\mu,N],t}^i$ is a coupling operator of the operators $\scr{L}_{[{\mu}],t}$ and $\scr{L}_{N,t}$.

Now, for $1\le i\le N$,  consider the following SDE
\begin{equation}\left\{\begin{array}{l}\label{iCoupling process}
dX^{i,\mu}_t=Y^{i,\mu}_tdt,\\
dY^{i,\mu}_t=\left(b(X^{i,\mu}_t)+\displaystyle\int_{\R^d}\tilde{b}(X^{i,\mu}_t,z)\,\mu_t^{X}(dz)-\ga Y^{i,\mu}_t\right)\,dt+dL_t^i,~~~\mu_t^{X}={\rm Law}(X_t^i)\\
d\bar{X}^{i,N}_t=\bar{Y}^{i,N}_tdt,\\
d\bar{Y}^{i,N}_t=\left(b(\bar{X}^{i,N}_t)+N^{-1}\sum_{j=1}^N\tilde{b}(\bar{X}^{i,N}_t,\bar{X}^{j,N}_t)-\ga \bar{Y}^{i,N}_t\right)\,dt+dL^{\ast,i}_t. \end{array}\right.
\end{equation}
Herein,
$$
dL^{\ast,i}_t=dL_t^i+\I_{\{\De((X^{i,\mu},Y^{i,\mu}_t),(\bar{X}^{i,N}_t,\bar{Y}^{i,N}_t))\le D_\Ga\}} \ga(Q^i_{t-})_\ka\int_{\R^d\times[0,1]}\Xi(\ga(Q^i_{t-})_\ka,z,u)N(dt,dz,du),
$$
where $\Xi$ is given by \eqref{ee},
$$
Q^i_{t}:=V^i_{t}+\ga^{-1}W^i_t \quad \hbox{with} \quad V^i_t:=X^{i,\mu}_t-\bar{X}^{i,N}_t \quad \hbox{and} \quad W^i_t:=Y^{i,\mu}_t-\bar{Y}^{i,N}_t.
$$
Furthermore, one can follow the proof of \cite[Proposition 2.2]{LW} and obtain that \eqref{iCoupling process} has a unique strong solution  $\{((X^{i,\mu}_t,Y^{i,\mu}_t),(\bar{X}^{i,N}_t,\bar{Y}^{i,N}_t))_{t\ge0}\}_{1\le i\le N}$.
Repeating the proof of Proposition \ref{coupling proof}, we can find that the infinitesimal generator of the process $\{((X^{i,\mu}_t,Y^{i,\mu}_t),(\bar{X}^{i,N}_t,\bar{Y}^{i,N}_t))_{t\ge0}\}_{1\le i\le N}$ is just the operator
 $\sum_{i=1}^N\widetilde{\scr{L}}_{[\mu,N],t}^i$, which is a coupling of the operators $\scr{L}_{[{\mu}],t}$ and $\scr{L}_{N,t}$. Therefore,
$\{((X^{i,\mu}_t,Y^{i,\mu}_t),(\bar{X}^{i,N}_t,\bar{Y}^{i,N}_t))_{t\ge0}\}_{1\le i\le N}$
is a coupling process of $\{(X^{i,\mu}_t,Y^{i,\mu}_t)_{t\ge0}\}_{1\le i\le N}$  and $\{(\bar{X}^{i,N}_t,\bar{Y}^{i,N}_t)_{t\ge0})\}_{1\le i\le N}$.

\subsection{Proof of Theorem \ref{chaos}}
In this subsection, for any $x^i,x'^i,y^i,y'^i\in\R^d$, we use notations of $v^i,w^i,q^i,r^i_s,r^i_l,\rho^i$ as the same way as these in Section \ref{Proofs}.

Recall that $\ps$ is the function defined by \eqref{A}. The following lemma is similar to Lemma \ref{le1}, and so we omit the proof here.

\begin{lemma}
For each $1\le i\le N$ and the
 operator $\widetilde{\scr{L}}_{[\mu,N],t}^i$ given by \eqref{iCoupling operator}, it holds that
\begin{align*}
(\widetilde{\scr{L}}_{[\mu,N],t}^i\,\ps)(r^i_s)&=\ps'(r^i_s)\bigg\{\frac{1}{|q^i|}\bigg\langle q^i,\ga^{-1}\big(b(x^i)-b(x'^i)+\int_{\R^d}\tilde{b}(x^i,z)\,\mu^X_t(dz)-N^{-1}\sum_{j=1}^N\tilde{b}(x'^i,x'^j)\big)\bigg\rangle\nonumber\\
&\qquad\qquad\,\, -\al\ga|v^i|+\frac{\al\ga}{|v^i|}\langle v^i,q^i\rangle\bigg\}\nonumber\\
&\quad+\frac{1}{2}\big(\ps(r^i_s-(\ka\wedge|q^i|))+\ps(r^i_s+(\ka\wedge|q^i|))-2\ps(r^i_s)\big)\nu_{\ga(q^i)_\ka}(\R^d)\I_{\{\Delta((x^i,y^i),(x'^i,y'^i))\le D_\Gamma\}}.\end{align*}
\end{lemma}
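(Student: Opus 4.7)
The plan is to follow the argument of Lemma \ref{le1} essentially verbatim, since the only structural change in $\widetilde{\scr{L}}_{[\mu,N],t}^i$ compared to $\widetilde{\scr{L}}_{[\hat\mu,\tilde\mu],t}$ concerns the drift part: the measure $\tilde\mu_t^X$ on the second marginal is replaced by the empirical contribution $N^{-1}\sum_{j=1}^N\tilde b(x'^i,x'^j)$, while the two jump pieces $\widetilde{\scr L}_{x^i,x'^i}$ and $\widetilde{\scr L}^\ast_{x^i,x'^i}$ are literally the operators introduced in \eqref{Lxx'} and \eqref{Lxx''} and act only in the $(y^i,y'^i)$-variables with $q^i=v^i+\gamma^{-1}w^i$ playing the role of $q$.

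First I would record the gradients
$$
\nabla_{y^i}\psi(r_s^i)=\psi'(r_s^i)\frac{q^i}{\gamma|q^i|},\qquad \nabla_{y'^i}\psi(r_s^i)=-\psi'(r_s^i)\frac{q^i}{\gamma|q^i|},
$$
$$
\nabla_{x^i}\psi(r_s^i)=\psi'(r_s^i)\Big(\alpha\frac{v^i}{|v^i|}+\frac{q^i}{|q^i|}\Big),\qquad \nabla_{x'^i}\psi(r_s^i)=-\psi'(r_s^i)\Big(\alpha\frac{v^i}{|v^i|}+\frac{q^i}{|q^i|}\Big).
$$
Plugging these into $\widetilde{\scr L}^i_{[\mu,N],t,0}\psi(r_s^i)$ and using $q^i=v^i+\gamma^{-1}w^i$ yields, exactly as in the first identity in the proof of Lemma \ref{le1},
$$
\widetilde{\scr L}^i_{[\mu,N],t,0}\psi(r_s^i)=\psi'(r_s^i)\Big\{\tfrac{1}{|q^i|}\big\langle q^i,\gamma^{-1}\big(b(x^i)-b(x'^i)+\textstyle\int\tilde b(x^i,z)\,\mu_t^X(dz)-N^{-1}\sum_j\tilde b(x'^i,x'^j)\big)\big\rangle-\alpha\gamma|v^i|+\tfrac{\alpha\gamma}{|v^i|}\langle v^i,q^i\rangle\Big\},
$$
which matches $\Theta_1$ in the target identity.

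Next I would handle the jump part. For $\widetilde{\scr L}_{x^i,x'^i}\psi(r_s^i)$ I would repeat the computation of Lemma \ref{le1}: since $\psi(r_s^i)$ depends on $y^i,y'^i$ only through $q^i$, the two one-sided drift terms inside the coupling operator conveniently cancel after the substitutions $z+\gamma(q^i)_\kappa\mapsto z$ and $z-\gamma(q^i)_\kappa\mapsto z$ (invoking \eqref{e}), leaving the clean expression
$$
\widetilde{\scr L}_{x^i,x'^i}\psi(r_s^i)=\tfrac{1}{2}\big(\psi(r_s^i-(\kappa\wedge|q^i|))+\psi(r_s^i+(\kappa\wedge|q^i|))-2\psi(r_s^i)\big)\nu_{\gamma(q^i)_\kappa}(\R^d).
$$
This is multiplied by $\I_{\{\Delta((x^i,y^i),(x'^i,y'^i))\le D_\Gamma\}}$ in accordance with \eqref{iCoupling operator}, producing $\Theta_2$. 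For $\widetilde{\scr L}^\ast_{x^i,x'^i}\psi(r_s^i)$ I would observe, as already noted after \eqref{Lxx''}, that the synchronous-coupling integrand is identically zero because the common shift $z$ leaves $v^i$, $w^i$ and hence $r_s^i$ unchanged; thus this piece contributes nothing.

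Summing the three pieces gives the claimed formula. The only minor obstacle is bookkeeping consistency: one must verify that the indicator $\I_{\{\Delta\le D_\Gamma\}}$ appears solely on the refined-basic-coupling term (since the drift and the synchronous-coupling parts are insensitive to this split), which is immediate from the definition \eqref{iCoupling operator}. Consequently no new ideas beyond those of Lemma \ref{le1} are required, and omitting the proof in the text is justified.
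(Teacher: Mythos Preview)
Your proposal is correct and matches precisely the approach the paper intends: the paper itself omits the proof, stating it is ``similar to Lemma \ref{le1},'' and your argument reproduces that computation with the only necessary change being the replacement of $\int\tilde b(x',z')\,\tilde\mu_t^X(dz')$ by the empirical term $N^{-1}\sum_j\tilde b(x'^i,x'^j)$ in the drift piece $\widetilde{\scr L}^i_{[\mu,N],t,0}$.
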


Then we have the following local contraction statement.
\begin{proposition}\label{Proposition3} Suppose that Assumptions {\rm\textbf{(A0)}}--{\rm\textbf{(A2)}} and \eqref{hh} hold. For each $1\le i\le N$, set
$$ a_t^i :=\bigg|\int_{\R^d}\tilde{b}(x^i,z)\,\mu^X_t(dz)-N^{-1}\sum_{j=1}^N\tilde{b}(x^i,x^j)\bigg|.$$ Then the following holds.
\begin{itemize}
\item[{\rm (i)}] It holds that for any $x^i, x'^i, y^i, y'^i\in\R^d$ with $\De((x^i,y^i),(x'^i,y'^i))\le D_\Ga$,
\begin{equation}\label{Lci}
\widetilde{\scr{L}}_{[\mu,N],t}^i\,\ps(r^i_s)\le-c_1\ps(r^i_s)-\frac{\al\ga}{4}\ps'(R_1)|v^i|+\ga^{-1}\bigg(N^{-1}\sum_{j=1}^N L_{\tilde{b}}(|v^i|+|v^j|)+ a_t^i \bigg),
\end{equation}
where $c_1$ is given by \eqref{c1}.

\item[{\rm(ii)}] It holds that for any $x^i, x'^i, y^i, y'^i\in\R^d$ with $\De((x^i,y^i),(x'^i,y'^i))>D_\Ga$,
\begin{equation}\label{lli}
\widetilde{\scr{L}}_{[\mu,N],t}^i\,(r^i_l)^2 \le-\tau\ga (r^i_l)^2+\ga^{-1}|(1-2\tau)v^i+2\ga^{-1}w^i|\bigg(N^{-1}\sum_{j=1}^N L_{\tilde{b}}(|v^i|+|v^j|)+ a_t^i \bigg),
\end{equation}
where $\tau$ is given by \eqref{tau}.
\end{itemize}
\end{proposition}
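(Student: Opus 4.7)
The plan is to follow the arguments of Propositions \ref{Proposition1} and \ref{Proposition2} almost verbatim, with a single bookkeeping change: the discrepancy between the law-averaged drift $\int_{\R^d}\tilde b(x^i,z)\,\mu^X_t(dz)$ and the empirical drift $N^{-1}\sum_{j=1}^N\tilde b(x'^i,x'^j)$ is now controlled by the triangle inequality rather than by \eqref{e:addremark}. Specifically, I would first record the decomposition
\[
\bigg|\int_{\R^d}\tilde b(x^i,z)\,\mu^X_t(dz)-N^{-1}\sum_{j=1}^N\tilde b(x'^i,x'^j)\bigg|\le a_t^i+N^{-1}\sum_{j=1}^N L_{\tilde b}(|v^i|+|v^j|),
\]
where the second summand uses Assumption \textbf{(A2)} applied termwise. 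This feeds directly into the $\Theta_1$-type drift contribution in both parts of the proposition.

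For part (i), I would start from the generator identity displayed just above Proposition \ref{Proposition3}, split it into $\Theta_1^i$ and $\Theta_2^i$ as in Lemma \ref{le1}, and bound $\Theta_1^i$ using $|b(x^i)-b(x'^i)|\le L_b|v^i|$ together with the decomposition above. Then the two-case analysis $|v^i|\le k_0|q^i|$ versus $|v^i|>k_0|q^i|$ proceeds exactly as in Proposition \ref{Proposition1}: in the first case \eqref{e:function2}, Assumption \textbf{(A0)} and monotonicity of $\sigma$ make $\Theta_2^i$ furnish the dissipation $-\al\ga\,\ps'(r^i_s)r^i_s$; in the second case $\Theta_2^i\le 0$ by \eqref{e:function1} and the dissipation $-\tfrac{k_0-4}{4(k_0\al+1)}\al\ga\,\ps'(r^i_s)r^i_s$ is produced by the $-\al\ga|v^i|/4$ excess in $\Theta_1^i$. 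Using $\ps'(r^i_s)\le 1$ and $\ps'(r^i_s)r^i_s\ge\ps'(R_1)\ps(r^i_s)$ then gives \eqref{Lci}.

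For part (ii), the synchronous coupling kills the non-local part of $(r^i_l)^2$, so only the drift piece survives and $\widetilde{\scr L}^i_{[\mu,N],t}\,(r^i_l)^2=\widetilde{\scr L}^i_{[\mu,N],t,0}\,(r^i_l)^2$. I would then reproduce the computation of Proposition \ref{Proposition2} with the same choices $A=\tfrac12(1-2\tau)^2$, $B=(1-2\tau)\ga^{-1}$, $C=\ga^{-2}$, simply replacing the $\bb{W}_1$-term by $N^{-1}\sum_j L_{\tilde b}(|v^i|+|v^j|)+a_t^i$ everywhere it appears. The one mildly delicate point, namely the non-contractive indicator contribution $(L_b+\th)(1-2\tau)\ga^{-1}|v^i|^2\I_{\{|v^i|\le R_0\}}$ coming from Assumption \textbf{(A1)}, is absorbed into $\tau\ga(r^i_l)^2/2$ using the fact that $\De((x^i,y^i),(x'^i,y'^i))>D_\Ga$ forces $(r^i_l)^2>\cR$ with $\cR$ as in \eqref{l}, exactly as in Proposition \ref{Proposition2}.

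I do not anticipate a genuinely new obstacle: the result reduces to a bookkeeping step inserted into the already established single-marginal contraction lemmas. The only care required is to keep the empirical-mean discrepancy $a_t^i$ and the exchange-type Lipschitz term $N^{-1}\sum_j L_{\tilde b}(|v^i|+|v^j|)$ as two \emph{separate} summands rather than folding them together, since they will be handled by different mechanisms in the subsequent proof of \eqref{WNde}: the symmetric exchange term cancels upon averaging with respect to the normalized metric $\rho_N$, while the $a_t^i$ contributes the $N^{-1/2}$ Monte-Carlo error via Assumption \textbf{(A3)}.
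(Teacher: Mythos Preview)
Your proposal is correct and mirrors the paper's own proof essentially verbatim: the paper also reduces everything to the single triangle-inequality estimate
\[
\bigg|\int_{\R^d}\tilde b(x^i,z)\,\mu^X_t(dz)-N^{-1}\sum_{j=1}^N\tilde b(x'^i,x'^j)\bigg|\le a_t^i+N^{-1}\sum_{j=1}^N L_{\tilde b}(|v^i|+|v^j|)
\]
and then simply invokes the proofs of Propositions \ref{Proposition1} and \ref{Proposition2} with this bound replacing the $\bb W_1$-term. Your additional remark about keeping $a_t^i$ and the exchange term separate for the downstream argument is a correct reading of how the proof of Theorem \ref{chaos} proceeds.
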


\begin{proof}
The proof mainly follows
those of Propositions \ref{Proposition1} and \ref{Proposition2}, and we only need to handle the term $$\int_{\R^d}\tilde{b}(x^i,z)\,\mu_t^X(dz)-N^{-1}\sum_{j=1}^N\tilde{b}(x'^i,x'^j)$$ involved in place of
$$\int_{\R^d}\tilde{b}(x,z)\,\hat{\mu}^X_t(dz)-\int_{\R^d}\tilde{b}(x',z')\,\tilde{\mu}^X_t(dz').$$

Indeed, by Assumption \textbf{(A2)},
\begin{align*}
&\bigg|\int_{\R^d}\tilde{b}(x^i,z)\,\mu_t^X(dz)-N^{-1}\sum_{j=1}^N\tilde{b}(x'^i,x'^j)\bigg|\\
&\le \bigg|\int_{\R^d}\tilde{b}(x^i,z)\,\mu_t^X(dz)-N^{-1}\sum_{j=1}^N\tilde{b}(x^i,x^j)\bigg|+N^{-1}\sum_{j=1}^N\bigg|\tilde{b}(x'^i,x'^j)-\tilde{b}(x^i,x^j)\bigg|\\
&\le  a_t^i +N^{-1}\sum_{j=1}^N L_{\tilde{b}}(|v^i|+|v^j|).
\end{align*}
With this at hand, we can repeat the proofs of Propositions \ref{Proposition1} and \ref{Proposition2} to obtain the desired assertion.
\end{proof}

To prove Theorem \ref{chaos}, we consider the coupling process $\{((X^{i,\mu}_t,Y^{i,\mu}_t),(\bar{X}^{i,N}_t,\bar{Y}^{i,N}_t))_{t\ge0}\}_{1\le i\le N}$
given by \eqref{iCoupling process} and apply Proposition \ref{Proposition3}.
For simplicity, for any
$t\ge0$ and $1\le i\le N$, we write $r^i_s(t)=r_s((X^{i,\mu}_t,Y^{i,\mu}_t),(\bar{X}^{i,N}_t,\bar{Y}^{i,N}_t))$, $r^i_l(t)=r_l((X^{i,\mu}_t,Y^{i,\mu}_t),(\bar{X}^{i,N}_t,\bar{Y}^{i,N}_t))$,
$\Delta^i(t)=\Delta((X^{i,\mu}_t,Y^{i,\mu}_t),(\bar{X}^{i,N}_t,\bar{Y}^{i,N}_t))$,
$\rho^i(t)=\psi((\De^i(t)\wedge D_\Ga)+\ep r^i_l(t))$
and $\rho_N(t)=N^{-1}\sum_{i=1}^N\rho^i(t)$.

\begin{proof}[Proof of Theorem $\ref{chaos}$]
First, for each $1\le i\le N$, set
$$
A^i_t:=\bigg|\int_{\R^d}\tilde{b}(X_t^{i,\mu},z)\,\mu^X_t(dz)-N^{-1}\sum_{j=1}^N\tilde{b}(X_t^{i,\mu},X_t^{j,\mu})\bigg|.
$$
With the aid of Proposition \ref{Proposition3}, one can follow the proof of Theorem  \ref{Main theorem} to see that
\begin{itemize}
\item  When $\De^i(t)\le D_\Ga$,
\begin{equation}\label{vi}
d\rho^i(t)\le-c_1\rho^i(t)\,dt-\frac{L_b\ga^{-1}}{4}\ps'(R_1)|V^i_t|\,dt+\ga^{-1}\bigg(N^{-1}\sum_{j=1}^N L_{\tilde{b}}|V^j_t|+A^i_t\bigg)\,dt+dM^i_t,
\end{equation}
where
$(M^i_t)_{t\ge0}$ is a martingale.
\item When $\De^i(t)>D_\Ga$,
\begin{equation}\label{ui}\begin{split}
d\rho^i(t)
&\le-\ps'(2R_1)\frac{c_2\ep\vare}{2}\rho^i(t)\,dt-\ps'(2R_1)\frac{c_2\ep}{4\sqrt{2}} (1-2\tau)|V^i_t|\,dt\\
&\quad+\frac{1}{2}\ga^{-1}\bigg(N^{-1}\sum_{j=1}^N L_{\tilde{b}}(|V^i_t|+|V^j_t|)+A^i_t\bigg)\,dt+d\widetilde{M}^i_t,
\end{split}
\end{equation} where $(\widetilde{M}^i_t)_{t\ge0}$ is a martingale.
\end{itemize}

Combining \eqref{vi} with \eqref{ui}, taking the expectation and summing over $i=1,2,\cdots,N$ yield
\begin{align*}
\frac{d}{dt}\bb{E}\bigg[N^{-1}\sum_{i=1}^N\rho^i(t)\bigg]
&\le-\min\bigg\{c_1,\frac{c_2\ep\vare}{2}\ps'(2R_1)\bigg\}\bb{E}\bigg[N^{-1}\sum_{i=1}^N\rho^i(t)\bigg]+\ga^{-1}\bb{E}\bigg[N^{-1}\sum_{i=1}^NA^i_t\bigg]\nonumber\\
&\quad-\min\left\{\frac{L_b\ga^{-1}}{4}\ps'(R_1),\frac{\ep\ga}{8\sqrt{2}}\tau(1-2\tau)\ps'(2R_1)\right\}\bb{E}\bigg[N^{-1}\sum_{i=1}^N|V^i_t|\bigg]\\
&\quad +\ga^{-1}L_{\tilde b}\bb{E}\bigg[N^{-1}\sum_{i=1}^N|V^i_t|\bigg]\nonumber\\
&\le-\la\,\bb{E}\bigg[N^{-1}\sum_{i=1}^N\rho^i(t)\bigg]+\ga^{-1}\bb{E}\bigg[N^{-1}\sum_{i=1}^NA^i_t\bigg],
\end{align*}
where $\la$ is given by \eqref{w} and in the second inequality we used again \eqref{n}.

According to Lemmas \ref{reAi} and \ref{C2} below, we find that there exists a constant $C'>0$ such that
$$
\sup_{1\le i\le N}\sup_{t\ge0}\E A^i_t\le C'N^{-1/2}.
$$
Inserting this bound of $\E A^i_t$ into the inequality above yields
$$
\frac{d}{dt}\bb{E}\big[\rho_N(t)\big]\le-\la\,\bb{E}\big[\rho_N(t)\big]+C'\ga^{-1} N^{-1/2}=:-\la\,\bb{E}\big[\rho_N(t)\big]+C'' N^{-1/2}.
$$
Applying Gr\"{o}nwall's inequality, we find that for any $t>0$ and
$\mu_0\in \mathscr{P}_1(\R^{2d})$,
$$
\bb{W}_{\rho_N}(({\mu}_0P^{{\mu}}_t)^{\otimes N},\bar{\mu}^N_t)
\le\bb{E}\big[\rho_N(t)\big]\le e^{-\la t}\,\bb{E}\big[\rho_N(0)\big]+\la ^{-1}C''N^{-1/2}=:  e^{-\la t}\,\bb{E}\big[\rho_N(0)\big]+ C_0 N^{-1/2}.
$$
Thus, taking the infimum over all couplings of $\mu_0^{\otimes N}$ and $\bar{\mu}^{\otimes N}$, we obtain \eqref{WNde}, i.e.,
$$
\bb{W}_{\rho_N}(({\mu}_0P^{{\mu}}_t)^{\otimes N},\bar{\mu}^N_t)\le e^{-\la t}\,\bb{W}_{\rho_N}(\mu_0^{\otimes N},\bar{\mu}^{\otimes N})+C_0N^{-1/2}.
$$

Recalling that
${\mu}_0P^{{\mu}}_t=\mu_{t}$ when ${\mu}_0=\mu$, we have
$$
\bb{W}_{\rho_N}(\mu_t^{\otimes N},\bar\mu_t^N)\le e^{-\la t}\bb{W}_{\rho_N}(\mu^{\otimes N},\bar\mu^{\otimes N})+C_0N^{-1/2}.
$$
Therefore, we obtain, by \eqref{equivalenceN}, for all $t>0$,
$$
\bb{W}_{l_N^1}(\mu_t^{\otimes N},\bar\mu_t^N)
\le
\frac{M_2}{M_1}e^{-\la t}\bb{W}_{l_N^1}(\mu^{\otimes N},\bar\mu^{\otimes N})+\frac{C_0}{M_1}N^{-1/2},
$$
where $M_1$ and $M_2$ are given  in Lemma \ref{lemma3.4}.
Hence, the desired assertion follows.
\end{proof}

\begin{lemma}\label{reAi} For all $1\le i\le N$ and $t>0$,
\begin{equation}\label{Ai}
\E A^i_t\le L_{\tilde{b}}\bigg(\frac{\sqrt{2}}{\sqrt{N}}+\frac{2}{N}\bigg)\bigg(\int_{\R^{d}}|z|^2\,\mu^X_t(dz)\bigg)^{1/2}.
\end{equation}\end{lemma}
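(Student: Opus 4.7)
The plan is to exploit that the processes $\{(X^{j,\mu}_t,Y^{j,\mu}_t)\}_{1\le j\le N}$ are independent and identically distributed solutions of the decoupled SDE \eqref{decoupledN}, so each $X^{j,\mu}_t$ has law $\mu^X_t$. Fixing $i$ and writing $\bar b(x):=\int_{\R^d}\tilde b(x,z)\,\mu^X_t(dz)$, one rewrites
\begin{equation*}
A^i_t=\bigg|\frac{1}{N}\sum_{j=1}^N\big[\bar b(X^{i,\mu}_t)-\tilde b(X^{i,\mu}_t,X^{j,\mu}_t)\big]\bigg|
\end{equation*}
and separates the diagonal term $j=i$ from the off-diagonal terms $j\ne i$. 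For the off-diagonal block we will exploit conditional independence and then apply Jensen's inequality; for the diagonal term we use a crude Lipschitz bound that only costs an extra $1/N$.

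First I would handle the off-diagonal sum. Conditionally on $X^{i,\mu}_t$, the vectors $Z_j:=\bar b(X^{i,\mu}_t)-\tilde b(X^{i,\mu}_t,X^{j,\mu}_t)$, $j\ne i$, are i.i.d.\ with mean zero (since $X^{j,\mu}_t\sim\mu^X_t$ is independent of $X^{i,\mu}_t$), so the cross-terms vanish and the conditional $L^2$-norm squared of their average is $\frac{1}{N^2}\sum_{j\ne i}\E[|Z_j|^2\mid X^{i,\mu}_t]$. Using Jensen and then Assumption \textbf{(A2)} together with $|z-z'|^2\le 2|z|^2+2|z'|^2$, one checks that
\begin{equation*}
\E[|Z_j|^2\mid X^{i,\mu}_t]\le L_{\tilde b}^2\int_{\R^d}\!\!\int_{\R^d}|z-z'|^2\,\mu^X_t(dz)\mu^X_t(dz')\le 2L_{\tilde b}^2\int_{\R^d}|z|^2\,\mu^X_t(dz),
\end{equation*}
which, after taking expectation, yields an $L^2$-bound of order $\sqrt{2/N}\,L_{\tilde b}(\int|z|^2\mu^X_t(dz))^{1/2}$ on the off-diagonal part; Cauchy--Schwarz turns this into the corresponding $L^1$-bound, giving the $\sqrt{2}/\sqrt{N}$ term.

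For the diagonal contribution $\frac{1}{N}|\bar b(X^{i,\mu}_t)-\tilde b(X^{i,\mu}_t,X^{i,\mu}_t)|$ I would simply write this as $\frac{1}{N}|\int[\tilde b(X^{i,\mu}_t,z)-\tilde b(X^{i,\mu}_t,X^{i,\mu}_t)]\,\mu^X_t(dz)|$, bound by $\frac{L_{\tilde b}}{N}\int|z-X^{i,\mu}_t|\,\mu^X_t(dz)$, take expectations to get $\frac{L_{\tilde b}}{N}\int\!\!\int|z-z'|\,\mu^X_t(dz)\mu^X_t(dz')$, and then use $|z-z'|\le|z|+|z'|$ followed by Cauchy--Schwarz under Assumption \textbf{(A3)} to produce the $2L_{\tilde b}/N$ term. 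Summing the two estimates with the triangle inequality gives \eqref{Ai}.

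The only mildly delicate step is the decorrelation argument for $j\ne i$: one must be careful that the conditioning is on $X^{i,\mu}_t$ and that the identity $\E[Z_j\mid X^{i,\mu}_t]=0$ uses independence of $X^{j,\mu}_t$ from $X^{i,\mu}_t$, which is precisely why the diagonal term has to be isolated. Everything else is a straightforward application of Jensen, Cauchy--Schwarz, and the Lipschitz bound in Assumption \textbf{(A2)}, with the finite second moment of $\mu^X_t$ controlled through Assumption \textbf{(A3)} and the finite second moment assumption on the initial law propagated by standard moment estimates for the decoupled SDE \eqref{decoupledN}.
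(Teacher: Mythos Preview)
Your proposal is correct and follows essentially the same decomposition and conditioning argument as the paper's proof: split off the diagonal term $j=i$, then for $j\ne i$ condition on $X^{i,\mu}_t$, use conditional independence and mean zero to reduce to a single conditional second moment, and bound via the Lipschitz constant $L_{\tilde b}$. One small slip: the inequality $\int\!\int|z-z'|^2\,\mu^X_t(dz)\,\mu^X_t(dz')\le 2\int|z|^2\,\mu^X_t(dz)$ is true but does not follow from the pointwise bound $|z-z'|^2\le 2|z|^2+2|z'|^2$ you cite (that gives a factor $4$, hence $2/\sqrt{N}$ instead of $\sqrt{2}/\sqrt{N}$); either expand the square directly, or---as the paper does---use the variance identity $\E[|Z_j|^2\mid X^{i,\mu}_t]=\tfrac12\int\!\int|\tilde b(X^{i,\mu}_t,z)-\tilde b(X^{i,\mu}_t,z')|^2\,\mu^X_t(dz)\,\mu^X_t(dz')$ and then apply the pointwise bound.
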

\begin{proof}
First, we note that the random variables $X^{i,\mu}_t$, $1\le i \le N$, are independent and identically distributed, and that, due to \eqref{weak}, for any $1\le i\le N$, $\scr L_{X^{i,\mu}_t}=\mu_t^X$ if $\mu_0=\mu$. Thus,
\begin{equation}\label{conexpec}
\E\big(\tilde{b}(X_t^{i,\mu},X_t^{j,\mu})|X_t^{i,\mu}\big)=\int_{\R^d}\tilde{b}(X_t^{i,\mu},z)\mu_t^X(dz),
\end{equation}
and so \begin{align*}
\E A^i_t
&=\E\bigg|\int_{\R^d}\tilde{b}(X_t^{i,\mu},z)\,\mu^X_t(dz)-N^{-1}\sum_{j=1}^N\tilde{b}(X_t^{i,\mu},X_t^{j,\mu})\bigg|\\
&\le\frac{1}{N}\E\bigg|(N-1)\int_{\R^{d}}\tilde{b}(X_t^{i,\mu},z)\,\mu^X_t(dz)-\sum^{N}_{j=1:j\neq i}\tilde{b}(X_t^{i,\mu},X_t^{j,\mu})\bigg|\\
&\quad+\frac{1}{N}\E\bigg|\int_{\R^{d}}\tilde{b}(X_t^{i,\mu},z)\,\mu^X_t(dz)-\tilde{b}(X_t^{i,\mu},X_t^{i,\mu})\bigg|\\
&\le\frac{1}{N}\left(\E\bigg(\E\bigg(\bigg|(N-1)\int_{\R^{d}}\tilde{b}(X_t^{i,\mu},z)\,\mu^X_t(dz)-\sum^{N}_{j=1:j\neq i}\tilde{b}(X_t^{i,\mu},X_t^{j,\mu})\bigg|^2\bigg|X_t^{i,\mu}\bigg)\bigg)\right)^{1/2}\\
&\quad+\frac{1}{N}\E\left(\int_{\R^{d}}\big|\tilde{b}(X_t^{i,\mu},z)-\tilde{b}(X_t^{i,\mu},X_t^{i,\mu})\big|\,\mu^X_t(dz)\right)\\
&=:I_1+I_2.
\end{align*}

For $I_1$, set
\begin{align*}
I_3:
&=\bigg|(N-1)\int_{\R^{d}}\tilde{b}(X_t^{i,\mu},z)\,\mu^X_t(dz)-\sum^{N}_{j=1:j\neq i}\tilde{b}(X_t^{i,\mu},X_t^{j,\mu})\bigg|^2\\
&=(N-1)^2\bigg(\int_{\R^{d}}\tilde{b}(X_t^{i,\mu},z)\,\mu^X_t(dz)\bigg)^2-2(N-1)\int_{\R^{d}}\tilde{b}(X_t^{i,\mu},z)\,\mu^X_t(dz)\cdot\sum^{N}_{j=1:j\neq i}\tilde{b}(X_t^{i,\mu},X_t^{j,\mu})\\
&\quad+\bigg(\sum^{N}_{j=1:j\neq i}\tilde{b}(X_t^{i,\mu},X_t^{j,\mu})\bigg)^2.
\end{align*}
Then, by \eqref{conexpec},
\begin{align*}
\E(I_3|X_t^{i,\mu})
&=(N-1)^2\bigg(\int_{\R^{d}}\tilde{b}(X_t^{i,\mu},z)\,\mu^X_t(dz)\bigg)^2-2(N-1)^2\bigg(\int_{\R^{d}}\tilde{b}(X_t^{i,\mu},z)\,\mu^X_t(dz)\bigg)^2\\
&\quad+\sum^{N}_{j=1:j\neq i}\sum^{N}_{k=1:k\neq i}\E\big(\tilde{b}(X_t^{i,\mu},X_t^{j,\mu})\cdot\tilde{b}(X_t^{i,\mu},X_t^{k,\mu})\big|X_t^{i,\mu}\big)\\
&=-(N-1)^2\bigg(\int_{\R^{d}}\tilde{b}(X_t^{i,\mu},z)\,\mu^X_t(dz)\bigg)^2+(N-1)(N-2)\bigg(\int_{\R^{d}}\tilde{b}(X_t^{i,\mu},z)\,\mu^X_t(dz)\bigg)^2\\
&\quad+(N-1)\int_{\R^{d}}\big(\tilde{b}(X_t^{i,\mu},z)\big)^2\,\mu^X_t(dz)\\
&=(N-1)\,{\rm Var}_{\mu_t^X}\big(\tilde{b}(X_t^{i,\mu},\cdot)\big).
\end{align*}
Here in the second equality above we note that when $j\neq k$, $X^{j,\mu}_t$ and $X^{k,\mu}_t$ are independent, and so, by \eqref{conexpec} again,
\begin{align*}
\E\big(\tilde{b}(X_t^{i,\mu},X_t^{j,\mu})\cdot\tilde{b}(X_t^{i,\mu},X_t^{k,\mu})\big|X_t^{i,\mu}\big)
&=\E\big(\tilde{b}(X_t^{i,\mu},X_t^{j,\mu})\big|X_t^{i,\mu}\big)\cdot\E\big(\tilde{b}(X_t^{i,\mu},X_t^{k,\mu})\big|X_t^{i,\mu}\big)\\
&=\bigg(\int_{\R^{d}}\tilde{b}(X_t^{i,\mu},z)\,\mu^X_t(dz)\bigg)^2;
\end{align*}
that when $j=k$,
$$\E\big(\tilde{b}(X_t^{i,\mu},X_t^{j,\mu})\cdot\tilde{b}(X_t^{i,\mu},X_t^{k,\mu})\big|X_t^{i,\mu}\big)=\int_{\R^{d}}\big(\tilde{b}(X_t^{i,\mu},z)\big)^2\,\mu^X_t(dz).$$
Furthermore, due to Assumption {\rm\textbf{(A2)}},
\begin{align*}
{\rm Var}_{\mu_t^X}\big(\tilde{b}(X_t^{i,\mu},\cdot)\big)
&=\frac{1}{2}\int_{\R^{d}}\int_{\R^{d}}\big|\tilde{b}(X_t^{i,\mu},z)-\tilde{b}(X_t^{i,\mu},z')\big|^2\,\mu^X_t(dz)\mu^X_t(dz')\\
&\le\frac{1}{2}L^2_{\tilde{b}}\int_{\R^{d}}\int_{\R^{d}}|z-z'|^2\,\mu^X_t(dz)\mu^X_t(dz')\\
&\le2L^2_{\tilde{b}}\int_{\R^{d}}|z|^2\,\mu^X_t(dz).
\end{align*}
Here, we obtain
\begin{align*}
I_1&=\frac{1}{N}\bigg(\E\big(\E(I_3|X_t^{i,\mu})\big)\bigg)^{1/2}
\le \frac{1}{N}\bigg(2(N-1)L_{\tilde{b}}^2\int_{\R^{d}}|z|^2\,\mu^X_t(dz)\bigg)^{1/2}\\
&\le \frac{\sqrt{2}L_{\tilde{b}}}{\sqrt{N}}\bigg(\int_{\R^{d}}|z|^2\,\mu^X_t(dz)\bigg)^{1/2}.
\end{align*}

For $I_2$, by Assumption \textbf{(A2)} again, we have
\begin{align*}
I_2
&\le \frac{L_{\tilde{b}}}{N}\,\E\left(\int_{\R^{d}}|X_t^{i,\mu}-z|\,\mu^X_t(dz)\right)
\le \frac{L_{\tilde{b}}}{N}\,\E\left(\int_{\R^{d}}\big(|X_t^{i,\mu}|+|z|\big)\,\mu^X_t(dz)\right)\\
&=\frac{2L_{\tilde{b}}}{N}\int_{\R^{d}}|z|\,\mu^X_t(dz)
\le\frac{2L_{\tilde{b}}}{N}\bigg(\int_{\R^{d}}|z|^2\,\mu^X_t(dz)\bigg)^{1/2},
\end{align*}
where the equality is due to $\int_{\R^{d}}|X_t^{i,\mu}|\,\mu^X_t(dz)=|X_t^{i,\mu}|$ and in the last inequality we used the H\"{o}lder inequality.

Combining with two estimates above yields \eqref{Ai}.
\end{proof}

\begin{lemma}\label{C2}
Suppose that Assumptions {\rm\textbf{(A1)}}-{\rm\textbf{(A3)}}, \eqref{hh} and \eqref{n} hold. Let $(X_t,Y_t)_{t\ge0}$ be the solution to \eqref{Equation} so that $\EE(|X_0|^2+|Y_0|^2)<\infty$. Then
$$\sup_{t\ge0}\EE|X_t|^2\le C_3,$$
where the constant $C_3$ depends on $\ga,\,L_b,\,L_{\tilde{b}},\,\th,\,R_0$, $\EE(|X_0|^2+|Y_0|^2)$ and the second order moment of L\'evy measure $\nu$.
\end{lemma}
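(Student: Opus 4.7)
The plan is to construct a quadratic Lyapunov function $V$ for the process $(X_t,Y_t)$ that mimics the squared metric $r_l^2$ introduced in \eqref{rl} and to apply It\^o's formula to establish a differential inequality of the form $\frac{d}{dt}\EE V(X_t,Y_t)\le -\lambda_0 \EE V(X_t,Y_t)+K_0$. Concretely, I would set
\[
V(x,y):=A|x|^2+B\langle x,y\rangle+C|y|^2
\]
with exactly the choice $A=\tfrac12(1-2\tau)^2$, $B=(1-2\tau)\gamma^{-1}$, $C=\gamma^{-2}$ from \eqref{rl}. Since $B^2<4AC$, $V$ is positive-definite and comparable to $|x|^2+|y|^2$, so a uniform-in-time bound on $\EE V(X_t,Y_t)$ immediately yields the claim.

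Applying It\^o's formula to $V(X_t,Y_t)$ and taking expectations kills the martingale part. The drift contribution mirrors the proof of Proposition \ref{Proposition2} with $(x',y')=(0,0)$: the identities in \eqref{i} together with the strict inequality \eqref{hh} produce a coercive $-2\tau\gamma V$ term, modulo a boundary correction supported on $\{|X_t|\le R_0\}$ that is controlled by a constant after using the global bound $|b(x)|\le|b(0)|+L_b|x|$ from Assumption \textbf{(A1)} and Young's inequality. The jump contribution evaluates to
\[
C\int_{\R^d}|z|^2\,\nu(dz)+\bigl\langle BX_t+2CY_t,\,\int_{\{|z|>1\}}z\,\nu(dz)\bigr\rangle,
\]
and Assumption \textbf{(A3)} ensures that both $\int|z|^2\,\nu(dz)$ and $\int_{\{|z|>1\}}z\,\nu(dz)$ are finite, so another Young's inequality absorbs the linear piece into $\tfrac{\tau\gamma}{2}V$ plus a constant.

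The remaining mean-field term $\int\tilde b(X_t,z)\,\mu_t^X(dz)$ is bounded pointwise by $|\tilde b(0,0)|+L_{\tilde b}|X_t|+L_{\tilde b}\EE|X_t|$ via Assumption \textbf{(A2)}, so its contribution to $\mathcal L V$ is linear in $|X_t|$, $|Y_t|$ and $\EE|X_t|$. After taking expectations Cauchy--Schwarz yields $\EE|X_t|\le(\EE V(X_t,Y_t))^{1/2}/\sqrt{A}$, and the whole term is bounded by $c_\star L_{\tilde b}\,\EE V(X_t,Y_t)+\tilde K_0$ for a dimension-free constant $c_\star=c_\star(\gamma,\tau)$. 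The smallness hypothesis \eqref{n} then ensures that $c_\star L_{\tilde b}\le\tfrac{\tau\gamma}{4}$, so this feedback is absorbed into a quarter of the dissipation, producing $\frac{d}{dt}\EE V(X_t,Y_t)\le-\tfrac{\tau\gamma}{4}\EE V(X_t,Y_t)+K_0$; Gr\"onwall's inequality and the equivalence of $V$ with $|x|^2+|y|^2$ then yield the claim. The main obstacle is exactly this last absorption: without \eqref{n}, the McKean--Vlasov feedback through $\EE|X_t|$ couples $\EE V$ back to itself at the scale of the dissipation, and the argument cannot be closed in a self-consistent way.
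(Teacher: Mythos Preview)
Your proposal is correct and follows essentially the same route as the paper's proof: the same quadratic Lyapunov function $V=A|x|^2+B\langle x,y\rangle+C|y|^2$ with the constants from \eqref{rl}, It\^o's formula, the dissipation coming from \eqref{i}/\eqref{la}, the boundary correction on $\{|X_t|\le R_0\}$, the jump term controlled via Assumption \textbf{(A3)}, and the mean-field feedback absorbed by the smallness condition \eqref{n} before applying Gr\"onwall. One small quibble: the inequality $\EE|X_t|\le(\EE V)^{1/2}/\sqrt{A}$ is not literally correct since $V(x,y)\ge A|x|^2$ fails in general; completing the square gives $V(x,y)\ge\tfrac14(1-2\tau)^2|x|^2$, so the constant should be $2(1-2\tau)^{-1}$ rather than $A^{-1/2}$, but this does not affect the argument.
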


\begin{proof}
We partly adopt the idea from the proof of \cite[Lemma 23]{SK}. Recall that $A=\frac{1}{2}(1-2\tau)^2$, $B=(1-2\tau)\ga^{-1}$ and $C=\ga^{-2}$ are given in Proposition \ref{Proposition2}.
By It\^o's formula, Assumptions \textbf{(A1)} and \textbf{(A2)}, it holds
\begin{align*}
&d\big(\langle X_t,AX_t\rangle+\langle X_t,BY_t\rangle+\langle Y_t,CY_t\rangle\big)\\
&=2A\langle X_t,Y_t\rangle\,dt+B\langle Y_t,Y_t\rangle\,dt+B\bigg\langle X_t,b(X_t)+\int_{\R^d}\tilde{b}(X_t,z)\,\hat{\mu}_t^X(dz)-\ga Y_t\bigg\rangle\,dt+B\langle X_t,dL_t\rangle\\
&\quad +2C\bigg\langle Y_t,b(X_t)+\int_{\R^d}\tilde{b}(X_t,z)\,\hat{\mu}_t^X(dz)-\ga Y_t\bigg\rangle\,dt\\
&\quad +C\int_{\R^d}\big(|Y_t+z|^2-|Y_t|^2-2\langle Y_t,z\rangle\I_{\{|z|\le1\}}\big)\nu(dz)dt+dM'_t\\
&\le 2A\langle X_t,Y_t\rangle\,dt+B\langle Y_t,Y_t\rangle\,dt+B\big[-\th |X_t|^2+(L_b+\th )|X_t|^2\I_{\{|X_t|\le R_0\}}\big]dt\\
&\quad +2C L_b|X_t||Y_t|dt+|BX_t+2CY_t||b(0)|dt+|BX_t+2CY_t|\big[L_{\tilde{b}}(\E|X_t|+|X_t|)+|\tilde{b}(0,0)|\big]dt\\
&\quad -\ga B\langle X_t,Y_t\rangle\,dt-2\ga C|Y_t|^2\,dt+\bigg\langle BX_t+2CY_t,\int_{\{|z|>1\}}z\nu(dz)dt\bigg\rangle\\
&\quad +C\int_{\R^d}|z|^2\nu(dz)dt+B\bigg\langle X_t,\int_{\R^d}z\widetilde{N}(dz,dt)\bigg\rangle+dM'_t\\
&\le -\big[(\th B-L^2_b\ga^{-1}C)|X_t|^2+(\ga B-2A)\langle X_t,Y_t\rangle+(\ga C-B)|Y_t|^2\big]\,dt\\
&\quad+|BX_t+2CY_t|\big[L_{\tilde{b}}(\E|X_t|+|X_t|)+|\tilde{b}(0,0)|\big]dt+|BX_t+2CY_t||b(0)|dt\\
&\quad +|BX_t+2CY_t|\int_{\{|z|>1\}}|z|\nu(dz)dt+\bigg(B(L_b+\th)R_0^2+C\int_{\R^d}|z|^2\nu(dz)\bigg)dt\\
&\quad +B\bigg\langle X_t,\int_{\R^d}z\widetilde{N}(dz,dt)\bigg\rangle+dM'_t\\
&=:(J_1+J_2+J_3+J_4)\,dt+\bigg(B(L_b+\th)R_0^2+C\int_{\R^d}|z|^2\nu(dz)\bigg)dt+dM^\ast_t,
\end{align*}
where $(M'_t)_{t\ge0}$ and $(M^\ast_t)_{t\ge0}$ are martingales.

Next, we will estimate $J_i,~i=1,2,3,4.$ Note again that $A=\frac{1}{2}(1-2\tau)^2$, $B=(1-2\tau)\ga^{-1}$ and $C=\ga^{-2}$. First, by \eqref{la} with $\la=2\tau\ga$, we have
\begin{align*}
J_1
&\le -2\tau\ga \bigg(\frac{1}{2}(1-2\tau)^2|X_t|^2+(1-2\tau)\ga^{-1}\langle X_t,Y_t\rangle+\ga^{-2}|Y_t|^2\bigg).
\end{align*}
Recalling that $L_{\tilde{b}}$ satisfies \eqref{n}, we have $L_{\tilde{b}}\le\frac{\tau\ga^2}{16\sqrt{2}}$, and so
\begin{align*}
\E J_2
&\le \frac{\ga^{-1}L_{\tilde{b}}}{2} \bigg(2\sqrt{2}\,\E|(1-2\tau)X_t+2\ga^{-1}Y_t|^2+\sqrt{2}\E|X_t|^2\bigg)+\ga^{-1}\E|(1-2\tau)X_t+2\ga^{-1}Y_t||\tilde{b}(0,0)|\\
&\le \frac{\tau\ga}{16}\E\bigg((1-2\tau)^2|X_t|^2+4(1-2\tau)\ga^{-1}\langle X_t,Y_t\rangle+4\ga^{-2}|Y_t|^2+(1-2\tau)^2|X_t|^2\bigg)\\
&\quad +\frac{\tau\ga}{16}\E|(1-2\tau)X_t+2\ga^{-1}Y_t|^2+\frac{4}{\tau\ga^3}|\tilde{b}(0,0)|^2\\
&= \tau\ga\E\bigg(\frac{3}{16}(1-2\tau)^2|X_t|^2+\frac{1}{2}(1-2\tau)\ga^{-1}\langle Y_t,Y_t\rangle+\frac{1}{2}\ga^{-2}|Y_t|^2\bigg)+\frac{2}{\tau\ga^3}|\tilde{b}(0,0)|^2
\end{align*}
where the first inequality follows from $2ab\le a^2+b^2$, and in the second inequality we used the facts that $(1-2\tau)^2\ge\frac{1}{2}$ with $\tau$ given by \eqref{tau} and $2ab\le a^2+b^2$ as well.\\
Besides, by $2ab\le a^2+b^2$ again, we can obtain
$$
\E J_3\le \tau\ga\E\bigg(\frac{1}{16}(1-2\tau)^2|X_t|^2+\frac{1}{4}(1-2\tau)\ga^{-1}\langle X_t,Y_t\rangle+\frac{1}{4}\ga^{-2}|Y_t|^2\bigg)+\frac{4}{\tau\ga^3}|b(0)|^2
$$
and
$$
\E J_4\le \tau\ga\E\bigg(\frac{1}{16}(1-2\tau)^2|X_t|^2+\frac{1}{4}(1-2\tau)\ga^{-1}\langle X_t,Y_t\rangle+\frac{1}{4}\ga^{-2}|Y_t|^2\bigg)+\frac{4}{\tau\ga^3}\bigg(\int_{\{|z|>1\}}|z|\nu(dz)\bigg)^2.
$$

Combining with all the estimates above yields
\begin{align*}
&\frac{d}{dt}\E\bigg(\frac{1}{2}(1-2\tau)^2|X_t|^2+(1-2\tau)\ga^{-1}\langle X_t,Y_t\rangle+\ga^{-2}|Y_t|^2\bigg)\\
&\le -2\tau\ga\E\bigg(\frac{1}{2}(1-2\tau)^2|X_t|^2+(1-2\tau)\ga^{-1}\langle X_t,Y_t\rangle+\ga^{-2}|Y_t|^2\bigg)\\
&\quad +\tau\ga\E\bigg(\frac{5}{16}(1-2\tau)^2|X_t|^2+(1-2\tau)\ga^{-1}\langle X_t,Y_t\rangle+\ga^{-2}|Y_t|^2\bigg)+\widetilde{C}\\
&\le -\tau\ga\E\bigg(\frac{1}{2}(1-2\tau)^2|X_t|^2+(1-2\tau)\ga^{-1}\langle X_t,Y_t\rangle+\ga^{-2}|Y_t|^2\bigg)+\widetilde{C},
\end{align*}
where $$
\widetilde{C}:=\frac{4}{\tau\ga^3}\bigg[|\tilde{b}(0,0)|^2+|b(0)|^2+\bigg(\int_{\{|z|>1\}}|z|\nu(dz)\bigg)^2\bigg]+(1-2\tau)\ga^{-1}(L_b+\th)R_0^2+\ga^{-2}\int_{\R^d}|z|^2\nu(dz).
$$
By Gr\"{o}nwall's inequality, we obtain
\begin{align*}
&\E\bigg(\frac{1}{2}(1-2\tau)^2|X_t|^2+(1-2\tau)\ga^{-1}\langle X_t,Y_t\rangle+\ga^{-2}|Y_t|^2\bigg)\\
&\le\E\bigg(\frac{1}{2}(1-2\tau)^2|X_0|^2+(1-2\tau)\ga^{-1}\langle X_0,Y_0\rangle+\ga^{-2}|Y_0|^2\bigg)e^{-\tau\ga t}+\frac{\widetilde{C}}{\tau\ga}\\
&\le \max\bigg\{\frac{(1-2\tau)^2}{2}+\frac{1-2\tau}{2\ga},\frac{1}{\ga^2}+\frac{1-2\tau}{2\ga}\bigg\}\E(|X_0|^2+|Y_0|^2)+\frac{\widetilde{C}}{\tau\ga}\\
&=:\widehat{C}<\infty,
\end{align*}
where $\widehat{C}<\infty$ follows from $\E(|X_0|^2+|Y_0|^2)<\infty$ and \eqref{hh}. Note that $\widehat{C}$ depends on $\ga,\,L_b,\,L_{\tilde{b}},\,\th,\,R_0$, $\EE(|X_0|^2+|Y_0|^2)$ and the second order moment of L\'evy measure $\nu$.
Following the third inequality in \eqref{e:add1} with $\eta^2=3/4$, we have
$$
\frac{1}{2}(1-2\tau)^2|X_t|^2+(1-2\tau)\ga^{-1}\langle X_t,Y_t\rangle+\ga^{-2}|Y_t|^2\ge \frac{1}{8}(1-2\tau)^2|X_t|^2,
$$
so the desired assertion follows with $C_3:=\frac{8\widehat{C}}{(1-2\tau)^2}$.
\end{proof}

Finally we give the following
 remark for possible extensions of the main results in our paper.
\begin{remark} In the present setting, we are only concerned on the SDE \eqref{Equation} with the nonlinear term in the drift of the convolution form, and furthermore in Theorem \ref{chaos} we also require that the initial distributions of the associated SDEs admit finite second order moment. Obviously, the arguments of the paper with some modifications can work for the nonlinear term that is much more general rather than of the convolution form. On the other hand, the finite second order moment condition on the initial distributions is due to the proof of Lemma \ref{reAi}, which is partly inspired by that of \cite[Theorem 2]{DEGZ} (see also the proof of \cite[Theorem 1.2]{LMW}). Instead, one may directly apply \cite[Lemma 3.3]{DK} to present an alternative proof for Lemma \ref{reAi} under the $(1+\varepsilon)$-order moment condition on the initial
distributions. Hence, there is room to establish Theorem \ref{chaos} under weaker moment on the initial distributions.
\end{remark}

\section{Appendix}
\subsection{Strong solution to McKean-Vlasov SDEs with L\'evy noises}
Consider the following McKean-Vlasov SDE on $\R^{n}$ with L\'evy noise:
\begin{equation}\label{MV}
dX_t=b(X_t,\scr{L}_{X_t})\,dt+\,\sigma dL_t,
\end{equation}
where  $\scr{L}_{X_t}$ is denoted by the law of $X_t$, $\sigma\in \R^n\times \R^n$, $b:\R^{n}\times\scr{P}_1(\R^{n})\to\R^{n}$
is a measurable map with $\scr{P}_1(\R^n)$ being the class of all probability measures on $\R^n$ with finite first order moment, and $(L_t)_{t\ge0}$ is an $n$-dimensional pure jump L\'evy process so that its L\'evy measure $\nu$ satisfies $\nu(|\cdot|\I_{\{|\cdot|\ge1\}})<\infty.$

\begin{definition}
\begin{itemize}
  \item [{\rm(1)}]A c\`{a}dl\`{a}g adapted process $(X_t)_{t\ge0}$ on $\R^{n}$ is called a strong solution to \eqref{MV}, if
  $$\EE\int_0^t|b(X_s,\scr{L}_{X_s})|\,ds<\infty,\quad t\ge 0,$$ and  $\mathbb{P}$-a.s.
  \begin{equation}\label{MV1}
  X_t=X_0+\int_0^tb(X_s,\scr{L}_{X_s})\,ds+\sigma L_t,\quad t\ge0.
  \end{equation}
  \item [{\rm(2)}]We say that \eqref{MV} has strong $($or pathwise$)$ existence and uniqueness in $\scr{P}_1(\R^n)$, if for any $\scr{F}_0$-measurable random variable $X_0$ with $\mathbb{E}|X_0|<\infty$, there exists a unique strong solution $(X_t)_{t\ge0}$ satisfying \eqref{MV1} and $\mathbb{E}|X_t|<\infty$.
  \item [{\rm(3)}]A couple $(\widetilde{X}_t,\widetilde{L}_t)_{t\ge0}$ is called a weak solution to \eqref{MV}, if $(\widetilde{L}_t)_{t\ge0}$ is an $n$-dimensional pure jump L\'evy process with respect to a complete filtered probability space $(\tilde{\Omega},\{\tilde{\scr{F}_t}\}_{t\ge0},\tilde{\PP})$, and $(\widetilde{X}_t)_{t\ge0}$ solves
      $$\widetilde{X}_t=\widetilde{X}_0+\int_0^tb(\widetilde{X}_s,\scr{L}_{\widetilde{X}_s}|_{\tilde{\PP}})\,ds+\sigma \widetilde{L}_t,\quad t\ge0.$$
  \item [{\rm(4)}]\eqref{MV} is said to have weak uniqueness in $\scr{P}_1(\R^n)$, if for any two weak solutions to the equation \eqref{MV} with common initial distribution in $\scr{P}_1(\R^n)$ are equal in law.
\end{itemize}
\end{definition}

We have the following statement from \cite[Theorem 2.3]{DH}.

\begin{theorem}\label{app} Suppose that $b(\cdot,\cdot)$ is continuous on $\R^{n}\times\scr{P}_1(\R^{n})$, and there exists a constant $K>0$ such that, for all $x_1,\,x_2\in\R^{n}$ and $\mu_1,\mu_2\in\scr{P}_1(\R^{n})$,
\begin{align}\label{Lip}
{\langle b(x_1,\mu_1)-b(x_2,\mu_2),x_1-x_2\rangle}\le K(|x_1-x_2|^2+\bb W_1(\mu_1,\mu_2)|x_1-x_2|)
\end{align}
and, for all $\mu\in \scr{P}_1(\R^{n})$,
\begin{equation}\label{Growth}
|b(0,\mu)|\le K(1+\mu(|\cdot|)).
\end{equation}
Then the equation \eqref{MV} has both strong and weak well-posedness in $\scr{P}_1(\R^{n})$.
\end{theorem}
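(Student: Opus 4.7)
The plan is to establish Theorem \ref{app} by a Picard-type fixed-point argument on the space of distribution flows, combined with the classical theory of monotone SDEs for each frozen-measure subproblem. First, for any continuous curve $(\mu_t)_{t\in[0,T]}$ in $\scr{P}_1(\R^n)$, freezing the measure argument reduces \eqref{MV} to the decoupled classical SDE
$$dX_t=b(X_t,\mu_t)\,dt+\sigma\,dL_t,$$
whose drift is continuous in $x$, satisfies the one-sided Lipschitz bound $\langle b(x_1,\mu_t)-b(x_2,\mu_t),x_1-x_2\rangle\le K|x_1-x_2|^2$ from \eqref{Lip}, and grows at most linearly in $x$ by \eqref{Growth} combined with \eqref{Lip}. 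Standard results for SDEs with monotone coefficients driven by pure-jump L\'evy noise then provide a unique strong solution with a uniform $L^1$-moment bound of the form $\EE|X_t|\le\bigl(1+\EE|X_0|+\sup_{s\le T}\mu_s(|\cdot|)\bigr)e^{CT}$, obtained via It\^o's formula applied to the regularized modulus $\phi_\epsilon(x):=\sqrt{\epsilon^2+|x|^2}$ followed by $\epsilon\downarrow0$.

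Next, I would introduce the Picard map $\Phi:\mathcal{X}_T\to\mathcal{X}_T$ on $\mathcal{X}_T:=C([0,T];\scr{P}_1(\R^n))$ defined by $\Phi(\mu_\cdot)_t:={\rm Law}(X_t^{\mu_\cdot})$, where $X^{\mu_\cdot}$ is the strong solution of the decoupled SDE above starting from a fixed initial random variable $X_0$, and endow $\mathcal{X}_T$ with the weighted metric
$$d_\lambda(\mu^1,\mu^2):=\sup_{t\in[0,T]}e^{-\lambda t}\,\bb W_1(\mu^1_t,\mu^2_t).$$
To show $\Phi$ is a contraction for $\lambda$ large, I would couple the two solutions $X^1,X^2$ associated with $\mu^1,\mu^2$ by using the \emph{same} L\'evy noise $(L_t)$ and the same initial value, so that all jump contributions in $X^1-X^2$ cancel and its evolution is driven purely by the drift. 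Applying It\^o's formula to $\phi_\epsilon(X_t^1-X_t^2)$ and letting $\epsilon\downarrow0$, I would use \eqref{Lip} to obtain
$$\frac{d}{dt}\EE|X_t^1-X_t^2|\le K\,\EE|X_t^1-X_t^2|+K\,\bb W_1(\mu^1_t,\mu^2_t).$$
Gr\"onwall's inequality combined with $\bb W_1({\rm Law}(X_t^1),{\rm Law}(X_t^2))\le\EE|X_t^1-X_t^2|$ then yields $d_\lambda(\Phi(\mu^1),\Phi(\mu^2))\le\tfrac{K}{\lambda-K}d_\lambda(\mu^1,\mu^2)$, a strict contraction once $\lambda>2K$. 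The unique fixed point $\mu^\ast$ gives a strong solution $X^{\mu^\ast}$ of \eqref{MV} on $[0,T]$, and weak uniqueness in $\scr{P}_1$ follows from the same estimate applied between any two weak solutions with common initial law. Extension from $[0,T]$ to $[0,\infty)$ is immediate by concatenation.

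The main obstacle is the mismatch between the $L^2$-type one-sided Lipschitz condition \eqref{Lip} and the $\scr{P}_1$ framework, which only provides first-order moments of solutions; this forces the pivotal estimate to be derived at the level of $\EE|X_t^1-X_t^2|$ rather than $\EE|X_t^1-X_t^2|^2$, so the regularized modulus $\phi_\epsilon$ and the cancellation of jump terms via the synchronous coupling (same $(L_t)$ for both processes) are indispensable: the squared-norm version would be immediate but is unavailable here. A secondary technical point is to verify that $\Phi$ indeed preserves $\mathcal{X}_T$, i.e.\ continuity of $t\mapsto{\rm Law}(X_t^{\mu_\cdot})$ in $\bb W_1$, which follows from the c\`adl\`ag property of the paths together with the uniform integrability supplied by the $L^1$-moment bound. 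One may then invoke Yamada--Watanabe if one wants to separately assemble weak well-posedness from weak existence and pathwise uniqueness, but in the present monotone framework the fixed point on distribution flows already delivers both strong and weak well-posedness simultaneously.
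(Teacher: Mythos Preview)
The paper does not give its own proof of this theorem: it is simply quoted as \cite[Theorem~2.3]{DH} in the appendix, without argument. So there is no in-paper proof to compare against, and your task was effectively to reconstruct a proof of the cited result. Your fixed-point scheme on $C([0,T];\scr{P}_1(\R^n))$---freeze the law, solve the decoupled SDE via monotone-coefficient theory, then contract in a weighted $\bb W_1$-metric using the synchronous coupling so that the jump terms in $X^1-X^2$ cancel---is exactly the standard route for McKean--Vlasov well-posedness under one-sided Lipschitz drift, and is almost certainly what the cited reference does.

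Two small points worth tightening. First, the phrase ``grows at most linearly in $x$'' is not accurate: \eqref{Lip} and \eqref{Growth} only yield the \emph{one-sided} growth bound $\langle b(x,\mu),x\rangle\le K|x|^2+K(1+\mu(|\cdot|))|x|$, not control of $|b(x,\mu)|$ (e.g.\ $b(x)=-x^3$ satisfies the hypotheses). This is precisely what the monotone-SDE existence theory requires, so the step is fine, but the wording should be adjusted. Second, the $\phi_\epsilon$ device is clean for the contraction estimate because the synchronous coupling removes all jumps from $X^1-X^2$; for the $L^1$-moment bound on a \emph{single} solution, however, the compensator of the small jumps contributes a term of order $\epsilon^{-1}\int_{\{|z|\le1\}}|z|^2\,\nu(dz)$ that blows up as $\epsilon\downarrow0$. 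The usual remedies are either to work with $Z_t:=X_t-\sigma L_t$ (absolutely continuous, hence no jump terms) combined with a stopping/localization argument, or to first obtain the bound for Lipschitz truncations $b_n$ of $b$ and pass to the limit using the one-sided condition. These are routine fixes; the overall strategy is sound.
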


\subsection{Comparison of two metrics $r_s((x,y),(x',y'))$ and $r_l((x,y),(x',y'))$}
\begin{lemma}\label{lemma1-app} For any $(x,y), (x',y')\in\R^{2d}$, let $r_s((x,y),(x',y')):=\al|x-x'|+|x-x'+\ga^{-1}(y-y')|$, and $r_l((x,y),(x',y'))$ be given by \eqref{rl}. Then, there exists a constant $c_0\ge 1$ such that for all $(x,y), (x',y')\in\R^{2d}$,
$$ c_0^{-1} r_l((x,y),(x',y'))\le r_s((x,y),(x',y')) \le c_0 r_l((x,y),(x',y')).$$ More explicitly, both of them are equivalent to the Euclidean distance on $\R^{2d}$. \end{lemma}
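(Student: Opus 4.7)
The plan is to reduce everything to the variables $v:=x-x'$ and $w:=y-y'$ and show, separately, that both $r_s$ and $r_l$ are equivalent to the Euclidean norm $(|v|^2+|w|^2)^{1/2}$ on $\R^{2d}$; the equivalence $c_0^{-1}r_l\le r_s\le c_0 r_l$ then follows immediately by transitivity.

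For $r_s((x,y),(x',y'))=\al|v|+|v+\ga^{-1}w|$, the upper bound by the Euclidean distance is immediate from the triangle inequality: $r_s\le(\al+1)|v|+\ga^{-1}|w|\le\sqrt{2}\max\{\al+1,\ga^{-1}\}(|v|^2+|w|^2)^{1/2}$. For the reverse direction I would use $|v|\le\al^{-1}r_s$ together with $|w|=\ga|(v+\ga^{-1}w)-v|\le\ga(|v+\ga^{-1}w|+|v|)\le\ga(1+\al^{-1})r_s$, which gives $(|v|^2+|w|^2)^{1/2}\le(\al^{-1}+\ga(1+\al^{-1}))r_s$. Thus $r_s$ is equivalent to the Euclidean distance with explicit constants depending only on $\al$ and $\ga$.

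For $r_l$, note that $r_l^2=\tfrac12(1-2\tau)^2|v|^2+(1-2\tau)\ga^{-1}\scal{v}{w}+\ga^{-2}|w|^2$ is a symmetric quadratic form in $(v,w)\in\R^{2d}$ whose matrix (block-wise) is $\begin{pmatrix}\tfrac12(1-2\tau)^2\,I_d & \tfrac12(1-2\tau)\ga^{-1}\,I_d\\ \tfrac12(1-2\tau)\ga^{-1}\,I_d & \ga^{-2}\,I_d\end{pmatrix}$. Its principal minors are $\tfrac12(1-2\tau)^2>0$ and the determinant $\tfrac12(1-2\tau)^2\ga^{-2}-\tfrac14(1-2\tau)^2\ga^{-2}=\tfrac14(1-2\tau)^2\ga^{-2}>0$, where positivity uses $\tau\in(0,1/2)$ (which follows from \eqref{tau} and \eqref{hh}). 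Hence the form is positive definite, so its minimal and maximal eigenvalues are strictly positive, which yields $c_1(|v|^2+|w|^2)\le r_l^2\le c_2(|v|^2+|w|^2)$ for explicit constants $c_1,c_2>0$ depending only on $\tau$ and $\ga$. Concretely, the lower bound has already been carried out in \eqref{e:add1} with the choice $\eta^2=3/4$, giving $r_l\ge\min\{(1-2\tau)/(2\sqrt2),\ga^{-1}/\sqrt3\}(|v|^2+|w|^2)^{1/2}$, and the upper bound follows directly from Cauchy--Schwarz applied to the cross term.

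Combining these two equivalences immediately produces the constant $c_0\ge1$ in the statement, and the second assertion that both metrics are equivalent to the Euclidean distance is proved as a byproduct. I do not foresee any real obstacle: the only point requiring care is verifying that the quadratic form defining $r_l^2$ is positive definite, and this reduces to the elementary algebraic inequality $\tau<1/2$ guaranteed by \eqref{tau}. Everything else is triangle inequality and $ab\le(a^2+b^2)/2$.
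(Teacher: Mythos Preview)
Your proposal is correct and follows essentially the same strategy as the paper: reduce to $v=x-x'$, $w=y-y'$ and show separately that $r_s$ and $r_l$ are each equivalent to the Euclidean norm. The only noteworthy difference is in the lower bound for $r_s$: the paper argues by a case split (when $|w|\ge 2\ga|v|$ one has $r_s\ge \ga^{-1}|w|-|v|\ge(2\ga)^{-1}|w|$, and combines this with $r_s\ge\al|v|$), whereas your direct bound $|w|\le\ga(|v+\ga^{-1}w|+|v|)\le\ga(1+\al^{-1})r_s$ is cleaner and yields explicit constants without any case analysis. For $r_l$, your Sylvester-criterion framing is a slightly more conceptual packaging of the same elementary inequalities the paper writes out by hand, and both invoke \eqref{e:add1} for the lower bound.
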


\begin{proof} As shown in the argument of \eqref{e:add1}, it holds that
\begin{align*}r_l((x,y),(x',y'))\ge &
\min\Big\{\frac{1-2\tau}{2\sqrt{2}},\frac{\ga^{-1}}{\sqrt{3}}\Big\}(|x-x'|^2+|y-y'|^2)^{1/2}\\
\ge &\frac{1}{\sqrt{2}}\min\Big\{\frac{1-2\tau}{2\sqrt{2}},\frac{\ga^{-1}}{\sqrt{3}}\Big\}(|x-x'|+|y-y'|).\end{align*}
On the other hand, it is clear that
\begin{align*} r_l((x,y),(x',y')) \le &\left(\frac{1}{2}(1-2\tau)^2|x-x'|^2+(1-2\tau)\ga^{-1}| x-x'||y-y'|+\ga^{-2}|y-y'|^2\right)^{1/2}\\
\le&\left((1-2\tau)^2|x-x'|^2+3\ga^{-2}/2|y-y'|^2\right)^{1/2}\\
\le& \max\big\{1-2\tau,\sqrt{3/2}\ga^{-1}\big\}(|x-x'|+|y-y'|).
\end{align*}

It follows from the definition of $r_s((x,y),(x',y'))$ that
$$r_s((x,y),(x',y'))\le (1+\al)|x-x'|+\ga^{-1}|y-y'|$$ and $r_s((x,y),(x',y'))\ge \al|x-x'|$. Furthermore, when $|y-y'|\ge 2\gamma|x-x'|$, we have
$$r_s((x,y),(x',y'))\ge \ga^{-1}|y-y'|-|x-x'|\ge (2\ga)^{-1}|y-y'|.$$ Therefore, there is a constant $c>0$ so that
$$r_s((x,y),(x',y'))\ge c(|x-x'|+|y-y'|).$$

Putting all the estimates together, we know that  both $r_s((x,y),(x',y'))$ and $r_l((x,y),(x',y'))$ are equivalent to the Euclidean distance on $\R^{2d}$. The proof is completed.
\end{proof}

\ \

\noindent \textbf{Acknowledgements.}
We would like to thank two referees for careful comments and corrections.

\vspace{0.3cm}
\section{Declarations}

\noindent \textbf{Funding}
The research is supported by the National Key R\&D Program of China (2022YFA1006003) and the  NSF of China (Nos.\ 12071076 and 12225104).\\

\noindent \textbf{Conflict of Interest} No potential conflict of interest was reported by the authors.\\

\noindent \textbf{Data Availability} Data sharing not applicable to this article as no datasets were generated or analysed during the current study.\\

\noindent \textbf{Authors' Contributions} All the authors contributed to conducting the research and writing the manuscript. \\

\end{document}